\theoremstyle{plain}
\newtheorem{theorem}{Theorem}[section]
\newtheorem{lemma}[theorem]{Lemma}     
\newtheorem{corollary}[theorem]{Corollary}
\newtheorem{proposition}[theorem]{Proposition}
\theoremstyle{definition}
\theoremstyle{remark}
\newtheorem{remark}[theorem]{Remark}
\numberwithin{equation}{section}
\DeclareMathOperator{\Pic}{Pic}
\DeclareMathOperator{\Hom}{Hom}
\DeclareMathOperator{\height}{ht}
\DeclareMathOperator{\supp}{supp}
\DeclareMathOperator{\Spec}{Spec}
\DeclareMathOperator{\diag}{diag}
\newcommand{\calA}{{\mathcal A}}
 \newcommand{\calL}{\mathcal L}
\newcommand{\calM}{\mathcal M} 
\newcommand{\calO}{\mathcal O} 
\newcommand{\calX}{\mathcal X}
\newcommand{\mC}{\mathbb C} \newcommand{\mN}{\mathbb N}
\newcommand{\mP}{\mathbb P} 
 \newcommand{\mZ}{\mathbb Z}
\newcommand{\gog}{\mathfrak g}
\newcommand{\gok}{\mathfrak k}
\newcommand{\gop}{\mathfrak p}
\newcommand{\got}{\mathfrak t}
\newcommand{\goz}{\mathfrak z}
\newcommand{\sfA}{\mathsf A} \newcommand{\sfB}{\mathsf B}
\newcommand{\sfC}{\mathsf C} \newcommand{\sfD}{\mathsf D}
\newcommand{\gra}{\alpha}     \newcommand{\grg}{\gamma}
 \newcommand{\grl}{\lambda}  \newcommand{\grs}{\sigma}
 \newcommand{\gro}{\omega}
\newcommand{\grG}{\Gamma} \newcommand{\grD}{\Delta}  \newcommand{\grL}{\Lambda}
 \newcommand{\grS}{\Sigma}
\newcommand{\mss}{\mathrm{ss}}
\newcommand{\ra}         {\rightarrow}
\newcommand{\lra}        {\longrightarrow}
\newcommand{\vuoto}      {\varnothing}
\renewcommand{\geq}      {\geqslant}
\renewcommand{\leq}      {\leqslant}
\newcommand{\senza}      {\smallsetminus}
\newcommand{\ol}         {\overline}
\newcommand{\ul}         {\underline}
\newcommand{\wt}         {\widetilde}
\newsavebox{\kdwzero}
\newsavebox{\kdwone}
\newsavebox{\kdwtwo}
\savebox{\kdwzero}{\put(-240,240){0}}
\savebox{\kdwone}{\put(-240,240){1}}
\savebox{\kdwtwo}{\put(-240,240){2}}
\title[Spherical nilpotent orbits in complex symmetric pairs]{Regular functions on spherical nilpotent\\orbits in complex symmetric pairs:\\classical Hermitian cases}
\email{bravi@mat.uniroma1.it}
\curraddr{\textsc{Dipartimento di Matematica\\ Sapienza Universit\`a di Roma\\ 
Piazzale Aldo Moro 5\\ 00185 Roma, Italy}}
\email{jacopo.gandini@sns.it}
\curraddr{\textsc{Scuola Normale Superiore\\
Piazza dei Cavalieri 7\\ 56126 Pisa, Italy}}
\author{Paolo Bravi, Jacopo Gandini}
\subjclass[2010]{Primary 14M27; Secondary 20G05}
\keywords{Nilpotent orbits; symmetric spaces; spherical varieties}
\begin{document}

\begin{abstract}
Given a classical semisimple complex algebraic group $G$ and a symmetric pair $(G,K)$ of Hermitian type, we study the closures of the spherical nilpotent $K$-orbits in the isotropy representation of $K$. We show that all such orbit closures are normal and describe the $K$-module structure of their ring of regular functions.
\end{abstract}

\maketitle

\section*{Introduction}
Let $G$ be a connected semisimple complex algebraic group, and let $K$ be the fixed point subgroup of an algebraic involution $\theta$ of $G$. 

The Lie algebra $\mathfrak g$ of $G$ splits into the sum of eigenspaces of $\theta$, 
\[\mathfrak g=\mathfrak k \oplus \mathfrak p,\]
where the Lie algebra $\mathfrak k$ of $K$ is the eigenspace of eigenvalue 1, and $\mathfrak p$ is the eigenspace of eigenvalue $-1$. The latter is called the isotropy representation of $K$.

In the present paper, we continue the systematic study initiated in \cite{BCG} of the spherical nilpotent $K$-orbits in $\mathfrak p$, classified by King \cite{Ki04}.

Here we treat the classical symmetric pairs $(G,K)$ of Hermitian type: in particular, $K$ is a maximal Levi subgroup of $G$, and $\mathfrak p = \mathfrak p_1\oplus\mathfrak p_2$ splits into the sum of two simple $K$-modules dual to each other. Under this assumption, we prove that all spherical nilpotent orbit closures are normal (Theorem~\ref{teo:allnormal}), and we compute the $K$-module structure of their coordinate rings.

In Appendix \ref{A} we report the list of the spherical nilpotent $K$-orbits in $\mathfrak p$ for all classical symmetric pairs $(\mathfrak g,\mathfrak k)$ of Hermitian type. In the list, every orbit is labelled with the corresponding signed partition, \cite{CoMG}.

For every orbit we provide an explicit description of a representative $e\in\mathfrak p$, as an element of a normal triple $\{h,e,f\}$, and the centralizer of $e$, which we denote by $K_e$. All these data can be deduced from \cite{Ki04}.

Then we provide the Luna spherical system associated with $\mathrm N_K(K_e)$, the normalizer of $K_e$ in $K$, which is a wonderful subgroup of $K$. 

Let us write $e=e_1+e_2$ with $e_1\in\mathfrak p_1$ and $e_2\in\mathfrak p_2$. If $e_1$ and $e_2$ are both non-zero, then the orbit $Ke$ is a \emph{bicone}, the normalizer of $K_e$ is the common stabilizer of the lines $[e_1] \in \mP(\gop_1)$ and $[e_2] \in  \mP(\gop_2)$, and $\mathrm N_K(K_e)/K_e$ is a 2-dimensional complex torus.

The Luna spherical systems are used to deduce the normality of the $K$-orbit closures, and to compute the $K$-module structure of the corresponding coordinate rings.

In the tables of Appendix \ref{B} we summarize the results of our computations. In Tables~1--6 we describe the $K$-module structure of $\mathbb C[\ol{Ke}]$ by giving a set of generators of its weight semigroup $\grG(\ol{Ke})$ (that is, the semigroup of the highest weights occurring in $\mathbb C[\ol{Ke}]$). Tables~7--11 contain the Luna spherical system of $\mathrm N_K(K_e)$.

In Section~\ref{s:3} we adapt the criterion of normality for spherical cones used in our previous papers \cite{BGM} and \cite{BCG} to the case of a spherical multicone. In Section~\ref{s:1} we compute the Luna spherical systems of the normalizers $\mathrm N_K(K_e)$. In Section~\ref{s:2} we show that the multiplication of sections of globally generated line bundles on the corresponding wonderful varieties is surjective. In Section~\ref{s:4} we deduce our results on normality and semigroups. 

\subsection*{Acknowledgments}

We would like to thank Andrea Maffei for his help in this project and, in particular, for sharing with us his ideas which led to Section~\ref{s:3}. We would like to thank also the anonymous referee for some useful remarks.
  
\subsection*{Notation}

As in our previous paper, simple roots of irreducible root systems are denoted by $\alpha_1,\alpha_2,\ldots$ and enumerated as in Bourbaki, when belonging to different irreducible components they are denoted by $\alpha_1,\alpha_2,\ldots$, $\alpha'_1,\alpha'_2,\ldots$, $\alpha''_1,\alpha''_2,\ldots$, and so on. When $G$ (resp. $K$, $T$, ...) is an algebraic group, we will denote the associated Lie algebra by the corresponding fraktur character $\gog$ (resp. $\gok$, $\got$, ...). If moreover $X$ is a $G$-variety (resp. a $K$-variety) and $x \in X$, we will denote the stabilizer of $x$ by $G_x$ (resp. $K_x$).

%%%%%%%%%%%%%%%%%%%%%%%%%%%%%%%%%%%%%%%%%%%%%%%%%%
%%%%%%%%%%%%%%%%%%%%%%%%%%%%%%%%%%%%%%%%%%%%%%%%%%
\section{A criterion for the normality of a spherical multicone}\label{s:3}
%%%%%%%%%%%%%%%%%%%%%%%%%%%%%%%%%%%%%%%%%%%%%%%%%%
%%%%%%%%%%%%%%%%%%%%%%%%%%%%%%%%%%%%%%%%%%%%%%%%%%

In this section, $G$ will denote a connected reductive complex algebraic group (possibly not semisimple). By generalizing an argument due to C.~De~Concini in \cite{CDC}, in this section we will give a criterion (Theorem~\ref{teo:normalita}) to test the normality of a spherical multicone. We will apply it in Section \ref{s:4} to study the normality of closures of spherical $K$-orbits in $\gop$, where $K$ is a symmetric subgroup of a semisimple group and $\gop$ is the isotropy representation of $K$.

Fix a Borel subgroup $B \subset G$ and a maximal torus $T \subset B$. Let $\Pi = \{\grl_1, \ldots, \grl_m\} \subset \calX(T)$ be a finite set of dominant weights and denote $V_\Pi = \bigoplus_{1=1}^m V(\grl_i)$. For all $i= 1, \ldots, m$ we denote by $\hat\pi_i \colon V \lra V(\grl_i)$ the corresponding projection.

Let $e \in V_\Pi$ and suppose that $Ge$ is spherical. We will assume that $\hat\pi_i(e) \neq 0$ for all $i=1, \ldots, m$. Under this assumption, we have a well-defined equivariant map $\pi_i \colon Ge \lra \mP(V(\grl_i))$ for all $i$, hence diagonally we get an equivariant map
$$\pi\colon Ge \lra \mP(V(\grl_1)) \times \ldots \times \mP(V(\grl_m)).$$

We say that a closed subvariety $Z \subset V_\Pi$ is a \textit{multicone} (w.r.t.\ the given decomposition of $V_\Pi$) if, for all $z \in Z$ and $(\xi_1,\ldots,\xi_m) \in \mC^m$, it holds 
\[\xi_1\hat\pi_1(z)+\ldots+\xi_m\hat\pi_m(z) \in Z.\] 
Given a spherical orbit $Ge \subset V_\Pi$, we will define a wonderful $G$-variety $X$ endowed with a map
$$
	\phi \colon X \lra \mP(V(\grl_1)) \times \ldots \times \mP(V(\grl_m))
$$
which is birational on its image and which identifies $\Pi$ with a set of globally generated line bundles on $X$. If moreover $\ol{Ge} \subset V_\Pi$ is a multicone, we will establish a combinatorial criterion for $\ol{Ge}$ to be normal in terms of $\Pi$, regarded as a subset of $\Pic(X)$.

\begin{proposition}  \label{prop: stabilizzatori}
Let $Ge \subset V_\Pi$ be a spherical orbit, then $G_{\pi(e)} = \mathrm N_G(G_e)$.
\end{proposition}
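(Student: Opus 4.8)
The plan is to set $H = G_e$ and prove the equality by the two inclusions $G_{\pi(e)} \subseteq \mathrm N_G(G_e)$ and $\mathrm N_G(G_e) \subseteq G_{\pi(e)}$ separately. The first is a formal computation valid for any orbit; the second is where I would use that $Ge$ is spherical, and this is the heart of the matter.

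I would begin by reformulating both groups in terms of the components $\hat\pi_i(e)$. Since each $V(\grl_i)$ is a $G$-submodule of $V_\Pi$ and $\hat\pi_i$ is $G$-equivariant, an element $g$ fixes $e = \sum_i \hat\pi_i(e)$ precisely when it fixes every $\hat\pi_i(e)$; hence $H$ is the common stabilizer of the \emph{vectors} $\hat\pi_i(e)$, whereas $G_{\pi(e)}$ is the common stabilizer of the \emph{lines} $[\hat\pi_i(e)] \in \mP(V(\grl_i))$. The inclusion $G_{\pi(e)} \subseteq \mathrm N_G(H)$ then follows directly: if $g \in G_{\pi(e)}$ acts on $\hat\pi_i(e)$ by a scalar $c_i(g) \in \mC^*$, then for $h \in H$ the conjugate $ghg^{-1}$ fixes each $\hat\pi_i(e)$ and hence fixes $e$, so that $gHg^{-1} \subseteq H$; applying the same to $g^{-1}$ gives $gHg^{-1} = H$.

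For the reverse inclusion I would exploit sphericity through the multiplicity-free property. Since $Ge \cong G/H$ is spherical and quasi-affine, $\mC[G/H]$ is a multiplicity-free $G$-module, and Frobenius reciprocity identifies the multiplicity of $V(\grl_i)$ with $\dim (V(\grl_i)^*)^H$; running over all dominant weights this yields $\dim V(\mu)^H \le 1$ for every irreducible $V(\mu)$. As $\hat\pi_i(e)$ is a nonzero $H$-fixed vector of $V(\grl_i)$, I conclude that the fixed space is exactly the line $V(\grl_i)^H = \mC\,\hat\pi_i(e)$. Finally, any $n \in \mathrm N_G(H)$ satisfies $n\cdot V(\grl_i)^H = V(\grl_i)^{nHn^{-1}} = V(\grl_i)^H$, so $n$ stabilizes the line $[\hat\pi_i(e)]$ for every $i$; that is, $n \in G_{\pi(e)}$, which completes the argument.

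The step I expect to be the crux is the estimate $\dim V(\grl_i)^H \le 1$: it is precisely the reformulation of sphericity of $G/H$ as multiplicity-freeness of $\mC[G/H]$, and one should check that $G/H$ is quasi-affine so that this classical characterization applies, which holds here since $G/H \cong Ge$ is a locally closed subset of the vector space $V_\Pi$. Everything else is formal, and the only hypotheses genuinely used are the sphericity of $Ge$ and the assumption $\hat\pi_i(e) \neq 0$ for all $i$, the latter guaranteeing that the fixed space is nonzero, so that $n$ acts on it by a nonzero scalar.
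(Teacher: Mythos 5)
Your proof is correct and takes essentially the same route as the paper's: both inclusions are obtained by the same formal manipulations, hinging on the key fact that sphericity forces the fixed space $V(\grl_i)^{G_e}$ to be exactly the line $\mC\,\hat\pi_i(e)$. The only difference is that you justify this fact explicitly (via multiplicity-freeness of $\mC[G/G_e]$ for the quasi-affine homogeneous space $G/G_e$), whereas the paper simply asserts it as a consequence of sphericity.
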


\begin{proof}
Write $e = \sum_{i=1}^m e_i$ with $e_i \in V(\grl_i) \subset V_\Pi$, and notice that $G_e = G_{e_1} \cap \ldots \cap G_{e_m}$. Being spherical, for all $i=1,\ldots,m$ the subgroup $G_e$ fixes pointwise a unique line in $V(\grl_i)$, namely $[e_i]$. If $g \in \mathrm N_G(G_e)$, it follows that $ge_i \in V(\grl_i)^{G_e}$ for all $i=1,\ldots,m$, hence $g \in G_{[e_1]} \cap \ldots \cap G_{[e_m]} = G_{\pi(e)}$. Therefore $\mathrm N_G(G_e) \subset G_{\pi(e)}$. Similarly, if $g \in G_{\pi(e)}$, then $ge_i \in V(\grl_i)^{G_e}$ for all $i=1,\ldots,m$, hence $ge \in V_\Pi^{G_e}$ and $g^{-1}hg e = e$ for all $h \in G_e$, that is $g\in \mathrm N_G(G_e)$. This shows the equality $G_{\pi(e)} = \mathrm N_G(G_e)$.
\end{proof}

Given a spherical subgroup $H \subset G$, it follows by a theorem of F.~Knop that the homogeneous space $G/\mathrm N_G(H)$ admits a wonderful compactification (see \cite[Corollary 6.5]{Kn} or \cite[Proposition 2.4]{BP15}). Notice that in general the normalizer of a spherical subgroup is not self-normalizing (see e.g.\ \cite[Example~4]{Avd}). The spherical homogeneous spaces occurring in our cases~2.4 and~4.4 give other examples of this kind.

Let us now fix some notation and recall some general facts on line bundles and their sections on a wonderful variety $X$, see e.g. \cite[Section 1]{BGM} and the references therein. Let $\grD$ be the set of colors of $X$ (that is, the $B$-stable prime divisors of $X$ which are not $G$-stable) and if $D \in \mZ \grD$ denote by $\calL_D \in \Pic(X)$ the corresponding line bundle. Recall that $\Pic(X)$ is a free Abelian group with basis the line bundles $\calL_D$ with $D \in \grD$, and that, under the indentification $\Pic(X) = \mZ\grD$, the semigroup of globally generated line bundles is identified with $\mN \grD$. Given $\calL, \calL' \in \Pic(X)$ we denote by
$$
	m_{\calL, \calL'} : \grG(X, \calL) \otimes \grG(X, \calL') \lra \grG(X, \calL \otimes \calL') 
$$
the multiplication of sections. If $\calL = \calL_D$ and $\calL' = \calL_E$ with $D,E \in \mZ \grD$, we will also denote $m_{\calL, \calL'}$ by $m_{D, E}$.

Let $Y \subset X$ be the unique closed $G$-orbit and let $y_0 \in Y$ be the unique $B^-$-fixed point, where $B^-$ denotes the opposite Borel subgroup of $B$. Recall the set of the spherical roots of $X$, defined by 
$$
	\grS = \{T \text{-weights in } \mathrm T_{y_0} X/\mathrm T_{y_0} Y \}.
$$
Then there is a bijective correspondence between spherical roots and $G$-stable divisors in $X$, which allows to regard $\mZ \grS$ as a sublattice of $\Pic(X) = \mZ\grD$.
This defines a partial order $\leq_\grS$ on $\mN \grD$ as follows: if $D,E \in \mN \grD$, then $D \leq_\grS E$ if and only if $E-D \in \mN\grS$. We say that $E \in \mN\grD$ is \textit{minuscule} w.r.t.\ $\leq_\grS$ if there exists no $F \in \mN \grD$ such that $F \leq_\grS E$ and $F \neq E$. Notice that $\grG(X,\calL_E)$ is
an irreducible $G$-module if and only if $E$ is minuscule in $\mN\grD$ w.r.t.\ $\leq_\grS$.
Given $F \in \mN\grD$, let indeed $V_F \subset \grG(X, \calL_F)$ be the $G$-module generated by a canonical section, and given $\grg \in \mZ \grS$ (regarded as a sublattice of $\mZ \grD$) let $s^\grg \in \grG(X,\calL_\grg)^G$. Then for $E \in \mN \grD$ we have the following decomposition into simple $G$-modules
\begin{equation}	\label{eq:decomposition}
	\grG(X,\calL_E) = \bigoplus_{F \leq_\grS E} s^{E-F} V_F.
\end{equation}

Going back to our setting, suppose that $Ge \subset V_\Pi$ is a spherical orbit. Then $G/G_{\pi(e)}$ admits a wonderful compactification $X$. Notice that the center $Z(G)$ of $G$ acts trivially on $G/G_{\pi(e)}$, hence it acts trivially on $X$ as well. 

For all $i=1, \ldots, m$, let $\phi_i \colon G/G_{\pi(e)} \lra \mP(V(\grl_i))$ be the restriction of the projection defined on $\mP(V(\grl_1)) \times \ldots \times \mP(V(\grl_m))$.  By the theory of spherical embeddings (see in particular \cite[Theorem 5.1]{Kn0}), any dominant morphism $G/G_{\pi(e)} \lra Y$ to a complete spherical $G$-variety $Y$ with a unique closed orbit can be extended to an equivariant morphism $X\lra Y$. This in particular applies to the closure of $\phi_i(G/G_{\pi(e)})$, since $\mathbb P ( V ( \lambda _i ))$ contains a unique $B$-fixed point that is contained in every closed $G$-orbit. Thus $\phi_i$ extends to an equivariant morphism $\phi_i \colon X \lra \mP(V(\grl_i))$, that we still denote by the same symbol by abuse of notation. Mapping $X$ diagonally to $\mP(V(\grl_1)) \times \ldots \times \mP(V(\grl_m))$ via $\phi = (\phi_1, \ldots, \phi_m)$, we get then an equivariant morphism
$$
	\phi \colon X \lra \mP(V(\grl_1)) \times \ldots \times \mP(V(\grl_m)).
$$
which extends the inclusion $G/G_{\pi(e)} \lra \pi(Ge) \subset \mP(V(\grl_1)) \times \ldots \times \mP(V(\grl_m))$.

For all $i=1, \ldots, m$, let $\calL_i \in \Pic(X)$ be the globally generated line bundle defined by setting $\calL_i = \phi_i^* \calO(1)$, and let $D_i \in \mN \grD$ be the unique $B$-stable divisor such that $\calL_{D_i} = \calL_i$. We denote
$$\grD_\Pi(e) = \{D_1, \ldots, D_m\}.$$

For every $(d_1,\ldots,d_m) \in \mN^m$, according with equation \eqref{eq:decomposition} the $G$-module $\grG(X,\calL_1^{d_1} \otimes \ldots \otimes \calL_m^{d_m})$ decomposes into the direct sum of the simple modules with highest weights of the form 
\begin{equation}\label{eq:weights}
d_1\lambda_1^*+\ldots+d_m\lambda_m^*-(d_1D_1+\ldots+d_mD_m-F)
\end{equation} 
where $F\in\mathbb N\Delta$ and $F\leq_{\Sigma}d_1D_1+\ldots+d_mD_m$.
Notice that we denote by $\lambda^*$ the highest weight of $V(\lambda)^*$, the dual of the simple module of highest weight $\lambda$. 

We will prove the following.

\begin{theorem}	\label{teo:normalita}
Suppose that the orbit $Ge$ is spherical and its closure $\ol{Ge} \subset V_\Pi$ is a multicone, and assume that the multiplication of sections $m_{\calL, \calL'}$ is surjective for all globally generated $\calL, \calL' \in \Pic(X)$. Then $\ol{Ge} \subset V$ is normal if and only if every $D \in \grD_\Pi(e)$ is minuscule in $\mN\grD$.
\end{theorem}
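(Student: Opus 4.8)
The plan is to relate the coordinate ring of the multicone $\ol{Ge}$ to the total coordinate ring (Cox-type ring) of the wonderful variety $X$, and to use the surjectivity hypothesis on multiplication of sections together with the decomposition formula \eqref{eq:decomposition} to read off when the natural candidate for the normalization map is an isomorphism. Concretely, I would first identify the graded ring
$$
R = \bigoplus_{(d_1,\ldots,d_m)\in\mN^m} \grG(X,\calL_1^{d_1}\otimes\cdots\otimes\calL_m^{d_m})
$$
as the ring attached to the multicone over $\phi(X)\subset \mP(V(\grl_1))\times\cdots\times\mP(V(\grl_m))$, where the $\mN^m$-grading comes from the $m$ line bundles in $\grD_\Pi(e)$. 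The surjectivity of $m_{\calL,\calL'}$ for globally generated $\calL,\calL'$ says exactly that $R$ is generated in degrees $(1,0,\ldots,0),\ldots,(0,\ldots,0,1)$ as soon as the degree-$e_i$ pieces generate; more precisely it ensures that the multiplication maps
$$
\grG(X,\calL^{d})\otimes\grG(X,\calL')\lra \grG(X,\calL^{d}\otimes\calL')
$$
are all surjective, so $R$ is an integrally closed (normal) ring, being the ring of sections of globally generated bundles on the normal variety $X$ with surjective multiplication. Thus $\Spec R$ is a normal affine multicone, and it maps to $\ol{Ge}$.

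**Next I would** compare $R$ with the actual coordinate ring $\mC[\ol{Ge}]$. The multicone $\ol{Ge}\subset V_\Pi$ has its own $\mN^m$-grading coming from the decomposition $V_\Pi=\bigoplus V(\grl_i)$, and its coordinate ring has a distinguished subring generated by the images of $V(\grl_i)^*=V(\grl_i^*)$, $i=1,\ldots,m$, sitting in multidegrees $e_i$. Since $\ol{Ge}$ is a multicone (closed under the rescalings $\xi_1\hat\pi_1(z)+\cdots+\xi_m\hat\pi_m(z)$), the $\mN^m$-grading is honest, and the equivariant map $\phi$ induces a degree-preserving inclusion $\mC[\ol{Ge}]\incluso R$ realizing $\Spec R$ as a candidate normalization of $\ol{Ge}$. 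The point is that the degree-$e_i$ part of $\mC[\ol{Ge}]$ is $V(\grl_i^*)=\grG(X,\calL_i)$ precisely when $D_i$ is minuscule (so that $\grG(X,\calL_{D_i})$ is irreducible, by the remark following \eqref{eq:decomposition}), whereas if some $D_i$ is not minuscule the ring $R$ carries extra generators in degree $e_i$ coming from the strictly smaller $F<_\grS D_i$ in \eqref{eq:decomposition}, which are not pulled back from linear coordinates on $V_\Pi$.

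**Then the normality criterion falls out** as follows. Because multiplication is surjective, $R$ is generated by its multidegree-$e_i$ components $\grG(X,\calL_i)$. If every $D_i\in\grD_\Pi(e)$ is minuscule, each such component equals $V(\grl_i^*)$, i.e. the linear coordinates on $V(\grl_i)$, so $R$ is generated in degree by exactly the linear functions on $V_\Pi$ and the inclusion $\mC[\ol{Ge}]\incluso R$ is an equality; since $R$ is normal, $\ol{Ge}$ is normal. Conversely, if some $D_i$ is not minuscule, \eqref{eq:decomposition} shows $\grG(X,\calL_i)$ strictly contains $V_{D_i}\cong V(\grl_i^*)$, so $R$ has a section in multidegree $e_i$ not coming from $\mC[\ol{Ge}]$; hence $\mC[\ol{Ge}]\subsetneq R$ is a strict, birational, finite extension of rings with the same fraction field, which forces $\ol{Ge}$ to be non-normal, $R$ being integrally closed in that common fraction field. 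I expect the \textbf{main obstacle} to be the careful verification that $\Spec R$ really is the normalization of $\ol{Ge}$ — that is, that the inclusion $\mC[\ol{Ge}]\incluso R$ is finite and birational and that $R$ is integrally closed in $\mathrm{Frac}(\mC[\ol{Ge}])$ — which requires checking that $\phi$ is birational onto its image (built into the construction of $X$) and that the multicone structure makes the two $\mN^m$-gradings compatible, so that equality in each multidegree $e_i$ propagates to equality in all multidegrees via the surjective multiplication. The other delicate point is making the ``extra section'' in the non-minuscule case genuinely regular on $\ol{Ge}$ yet outside $\mC[\ol{Ge}]$, i.e. certifying non-normality rather than merely a failure of generation.
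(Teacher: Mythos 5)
Your architecture is exactly the paper's: your ring $R$ is the paper's $\widetilde A(Ge)$, and the statement you need --- that $\Spec R \lra \ol{Ge}$ is the normalization --- is precisely the paper's Theorem~\ref{teo:normalizzazione}, from which Theorem~\ref{teo:normalita} is deduced by the same degree-one comparison you give (surjectivity makes $R$ generated by its degree-one part, and by \eqref{eq:decomposition} one has $\grG(X,\calL_{D_i})=V(\grl_i)^*$ exactly when $D_i$ is minuscule). The ``if'' direction of your argument is essentially complete: it needs only the graded inclusion $\mC[\ol{Ge}]\subset R$, normality of $R$, generation of $R$ in degree one, and equality of the degree-one pieces. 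One correction there: normality of $R$ has nothing to do with surjectivity of multiplication (which only gives generation in degree one); it holds because a ring of sections $\bigoplus \grG(Y,\calM_1^{d_1}\otimes\cdots\otimes\calM_k^{d_k})$ over a normal variety $Y$ is always normal, proved in the paper by covering $Y$ with affine opens trivializing the $\calM_i$.

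The genuine gap is in the ``only if'' direction, and it is the very point you flag as the main obstacle and then leave untouched: the integrality (equivalently, finiteness) of $R$ over $\mC[\ol{Ge}]$. In the non-minuscule case a section $f\in\widetilde A_1\smallsetminus A_1$ is, granting birationality, a rational function on $\ol{Ge}$; but to certify non-normality you must know $f$ is \emph{integral} over $\mC[\ol{Ge}]$, and this does not follow from anything you have established. In particular it cannot be absorbed into ``strict birational extension of finitely generated domains with the big ring integrally closed forces non-normality'': the inclusion $\mC[x,xy]\subset\mC[x,y]$ is strict and birational, both rings are finitely generated and in fact both are normal, because the extension is not integral. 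So without a finiteness argument the non-minuscule case could a priori produce exactly this situation, with $\ol{Ge}$ normal and $R$ strictly larger. The paper closes this hole with Brion's argument (\cite[Proposition 2.1]{CCM}): form the total spaces $L_X$ and $L_Z$ of the sheaves of algebras $\calA(\calL_1,\ldots,\calL_m)$ on $X$ and $\calA(p_1^*\calO(1),\ldots,p_m^*\calO(1))$ on $Z=\mP(V(\grl_1))\times\cdots\times\mP(V(\grl_m))$; the induced morphism $\ol\phi\colon L_X\lra L_Z$ is projective, hence $\ol\phi_*\calO_{L_X}$ is coherent, hence $\widetilde A(Ge)=\grG(L_Z,\ol\phi_*\calO_{L_X})$ is a finite module over $\grG(L_Z,\calO_{L_Z})$, whose image is $\mC[\ol{Ge}]$. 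Some such argument is indispensable; naming the obstacle is not the same as overcoming it, so as written your proposal proves only half of the theorem.
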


Denote by $A(Ge)$ the coordinate ring of $\overline{Ge}$ and by $A_d(Ge)$ its component of degree $d$, then $A(Ge)$ is generated by $A_1(Ge) = \bigoplus_{\grl \in \Pi} V(\grl)^*$.
Consider the multigraded ring
$$
	\widetilde A(Ge) = \bigoplus_{(d_1,\ldots,d_m) \in \mN^m} \grG(X,\calL_1^{d_1} \otimes \ldots \otimes \calL_m^{d_m}).
$$
and denote $\wt{Ge} = \Spec \widetilde A(Ge)$. Setting
$$\widetilde A_d(Ge) = \bigoplus_{d_1+\ldots+d_m = d} \grG(X,\calL_1^{d_1} \otimes \ldots \otimes \calL_m^{d_m})
$$
we get a canonical inclusion $A_d(Ge) \subset \widetilde A_d(Ge)$: indeed for all $i=1,\ldots,m$ the $G$-module $V(\grl_i)^*$ is canonically identified with a submodule in $\grG(X,\calL_i) \subset \widetilde A_1(Ge)$. This makes canonically $A(Ge)$ a subring of $\widetilde A(Ge)$, and we get a projection
$$
	p\colon \wt{Ge} \lra \ol{Ge}
$$

Theorem~\ref{teo:normalita} is a consequence of the following description of the normalization of $\ol{Ge}$.

\begin{theorem} \label{teo:normalizzazione} 
Let $Ge$ be spherical and $\ol{Ge} \subset V_\Pi$ be a multicone, and suppose that the multiplication of sections $m_{\calL, \calL'}$ is surjective for all $\calL, \calL' \in \Pic(X)$. Then $p \colon \widetilde{Ge} \lra \ol{Ge}$ is the normalization map.
\end{theorem}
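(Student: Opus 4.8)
The plan is to prove that $p \colon \widetilde{Ge} \lra \ol{Ge}$ is the normalization map by verifying the two defining properties: that $\widetilde A(Ge)$ is integrally closed (normality of $\wt{Ge}$) and that $p$ is a finite birational morphism. I would organize the argument around the multigraded ring $\widetilde A(Ge)$, exploiting the hypothesis that the multiplication of sections is surjective for all globally generated line bundles.

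\emph{Normality of $\wt{Ge}$.} The first step is to show that $\widetilde A(Ge)$ is integrally closed. The surjectivity hypothesis on $m_{\calL, \calL'}$ is exactly what identifies $\widetilde A(Ge)$ with the multihomogeneous coordinate ring (multi-section ring) of the wonderful variety $X$ with respect to the globally generated line bundles $\calL_1, \ldots, \calL_m$. Concretely, surjectivity of all multiplication maps means that $\widetilde A(Ge)$ is generated in multidegrees $e_1, \ldots, e_m$ (the standard basis of $\mN^m$) and that there are no ``hidden'' relations obstructing the identification $\widetilde A(Ge) = \bigoplus_{(d_1,\ldots,d_m)} \grG(X, \calL_1^{d_1} \otimes \cdots \otimes \calL_m^{d_m})$ as a subring of a polynomial-type construction over a normal base. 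Since $X$ is wonderful, hence smooth and in particular normal, and since the $\calL_i$ are globally generated, the multi-section ring is integrally closed: this is the standard fact that the (multi-)Cox-type ring built from globally generated bundles on a normal variety, with surjective multiplication, inherits normality. I would make this precise by realizing $\Spec \widetilde A(Ge)$ as an affine cone-type variety over $X$ and invoking that the total space of a direct sum of globally generated line bundles, or rather the relevant affinization, is normal because $X$ is.

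\emph{Finiteness and birationality of $p$.} For birationality, I would argue that $p$ restricts to an isomorphism over the open dense orbit $Ge \subset \ol{Ge}$. The map $\phi \colon X \lra \mP(V(\grl_1)) \times \cdots \times \mP(V(\grl_m))$ is birational onto its image and extends $\pi$; by Proposition~\ref{prop: stabilizzatori} the generic stabilizer matches $\mathrm N_G(G_e) = G_{\pi(e)}$, so the field of fractions of $\widetilde A(Ge)$ agrees with that of $A(Ge)$ after accounting for the multigrading, giving birationality. For finiteness, since both rings are graded with $A(Ge) \subset \widetilde A(Ge)$ and $A(Ge)$ is generated by $A_1(Ge) = \bigoplus_{\grl \in \Pi} V(\grl)^*$, which sits inside $\widetilde A_1(Ge)$, I would show that $\widetilde A(Ge)$ is a finitely generated module over $A(Ge)$. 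This reduces to checking that each graded piece $\widetilde A_d(Ge)$ is generated over the subring by finitely many elements, controlled by the decomposition \eqref{eq:decomposition} and the fact that only finitely many $F \leq_\grS d_1 D_1 + \cdots + d_m D_m$ contribute.

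\emph{The main obstacle.} The crux of the argument, and where I expect the real work to lie, is establishing normality of $\widetilde A(Ge)$ from the surjectivity hypothesis, i.e.\ converting ``all multiplication maps $m_{\calL, \calL'}$ are surjective'' into the statement that the multi-section ring is integrally closed. One must be careful that surjectivity for globally generated $\calL, \calL'$ gives generation in the correct multidegrees and that this suffices for normality of the associated affine scheme; the multicone hypothesis on $\ol{Ge}$ is what guarantees that $p$ is genuinely a morphism of multigraded rings compatible with the $\mN^m$-grading rather than merely the total grading. I would treat the birationality and finiteness as the more routine steps, relying on Proposition~\ref{prop: stabilizzatori} and the extension property of morphisms from $X$, and reserve the detailed argument for the normality of $\wt{Ge}$, which ultimately rests on the smoothness of the wonderful variety $X$.
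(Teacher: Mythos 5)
Your overall skeleton (normality of $\wt{Ge}$, plus birationality and finiteness of $p$) is the same as the paper's, and the normality step is essentially sound: the paper proves exactly your claim by trivializing the $\calL_i$ on an affine cover of the normal variety $X$. Note however that you misplace the surjectivity hypothesis there: the multi-section ring $\bigoplus_{(d_1,\ldots,d_m)}\grG(X,\calL_1^{d_1}\otimes\cdots\otimes\calL_m^{d_m})$ of \emph{arbitrary} line bundles on a normal variety is normal, and $\wt A(Ge)$ \emph{is} this ring by definition, so no identification is needed. Where the paper really uses surjectivity is in identifying $\wt{Ge}$ with the affine multicone over $\wt\phi(X)\subset\mP(\grG(X,\calL_1)^*)\times\cdots\times\mP(\grG(X,\calL_m)^*)$; combined with the hypothesis that $\ol{Ge}$ is the multicone over $\phi(X)$ and with Proposition~\ref{prop: stabilizzatori}, this gives birationality of $p$. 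Your birationality sketch (``the fraction fields agree after accounting for the multigrading'') is the right idea but is only asserted; it can be completed by observing that for $s\in\grG(X,\calL_i)$ and $0\neq t\in V(\grl_i)^*$ the ratio $s/t$ is a rational function on $X$, and that $\mC(X)=\mC(\phi(X))\subset \mathrm{Frac}\,A(Ge)$ by Proposition~\ref{prop: stabilizzatori}.

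The genuine gap is the finiteness step. You propose to prove that $\wt A(Ge)$ is a finite $A(Ge)$-module by ``checking that each graded piece $\wt A_d(Ge)$ is generated over the subring by finitely many elements'', using that only finitely many $F\leq_\grS d_1D_1+\cdots+d_mD_m$ contribute to \eqref{eq:decomposition}. This is vacuous: each graded piece $\wt A_d(Ge)$ is a finite-dimensional vector space, so degree-by-degree finiteness holds trivially and implies nothing about module finiteness --- by that criterion $\mC[x,y]$ would be a finite module over $\mC[x]$ (both graded in degree one, every graded piece finite-dimensional). What is needed is one finite set of module generators that works in \emph{all} degrees simultaneously, i.e.\ a uniform bound, and the combinatorics of \eqref{eq:decomposition} alone does not produce it. The paper gets integrality from a geometric argument of Brion: form the total spaces $L_X\lra X$ and $L_Z\lra Z$ of the sheaves of algebras $\calA(\calL_1,\ldots,\calL_m)$ and $\calA(p_1^*\calO(1),\ldots,p_m^*\calO(1))$; the induced map $\ol\phi\colon L_X\lra L_Z$ is projective, so $\ol\phi_*\calO_{L_X}$ is coherent, and taking global sections exhibits $\wt A(Ge)=\grG(L_Z,\ol\phi_*\calO_{L_X})$ as a finitely generated module over $\grG(L_Z,\calO_{L_Z})$, whose image in it is precisely $A(Ge)$. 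Alternatively, once surjectivity realizes $\wt{Ge}$ as a closed multicone in $\bigoplus_i\grG(X,\calL_i)^*$, you could argue that the linear projection onto $V_\Pi$ has center meeting $\wt{Ge}$ only in $0$, because each subsystem $V(\grl_i)^*\subset\grG(X,\calL_i)$ is base-point free (it defines the morphism $\phi_i$), and a linear projection of an affine cone whose center misses the cone is finite. Either way, some genuinely geometric input --- properness and coherence, or base-point freeness of the subsystems --- is indispensable here, and it is absent from your proposal.
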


\begin{remark}
In \cite{BCGM} a standard monomial theory for the Cox ring of a wonderful variety was constructed. By making use of such a tool, reasoning as in \cite[Proposition 5.2]{BCGM} one can actually show that $\wt{Ge}$ has rational singularities, hence it is in particular normal. We will give anyway a direct proof of the normality of $\widetilde{Ge}$ by using an easy geometric argument (see also \cite[Section~7]{BGM}).
\end{remark}

\begin{proof}[Proof of Theorem~\ref{teo:normalita}]
By Theorem~\ref{teo:normalizzazione} it follows that $\ol{Ge}$ is normal if and only if $A(Ge) = \widetilde A(Ge)$. Since the multiplication of sections $m_{\calL, \calL'}$ is surjective for all globally generated $\calL, \calL' \in \Pic(X)$, it follows that $\widetilde A(Ge)$ is generated by its degree one component $\widetilde A_1(Ge)$. Therefore $A(Ge) = \widetilde A(Ge)$ if and only if $A_1(Ge) = \widetilde A_1(Ge)$, if and only if $\grG(X,\calL_i) = V(\grl_i)^*$ for all $i=1, \ldots,m$. Being $\calL_i = \calL_{D_i}$, by the description of the irreducible components of $\grG(X,\calL_{D_i})$ (see e.g.\ \cite[Proposition~1.1]{BGM}), this is equivalent to the fact that every $D_i$ is minuscule in $\mN\grD$. 
\end{proof}

\subsection{Proof of Theorem~\ref{teo:normalizzazione}}

We will split the proof of Theorem~\ref{teo:normalizzazione} in several propositions. The following shows that $\widetilde A(Ge)$ is a normal ring.

Suppose that $Y$ is a normal variety and let $\calM_1, \ldots, \calM_k \in \Pic(Y)$, we denote
$$
	A(\calM_1, \ldots, \calM_k) = \bigoplus_{(d_1,\ldots,d_k) \in \mN^k} \grG(Y,\calM_1^{d_1} \otimes \ldots \otimes \calM_k^{d_k})
$$
and define correspondingly a sheaf of $\calO_Y$-algebras on $Y$ by setting $$\calA(\calM_1, \ldots, \calM_k) = \bigoplus_{(d_1,\ldots,d_k) \in \mN^k} \calM_1^{d_1} \otimes \ldots \otimes \calM_k^{d_k}.$$

\begin{proposition}
Suppose that $Y$ is a normal variety and let $\calM_1, \ldots, \calM_k \in \Pic(Y)$. Then $A(\calM_1, \ldots, \calM_k)$ is a normal ring.
\end{proposition}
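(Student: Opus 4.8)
The plan is to realize $A(\calM_1,\ldots,\calM_k)$ as the ring of global regular functions on a concrete normal variety, and then to invoke the elementary fact that the global functions on such a variety always form a normal ring. Concretely, I would pass to the relative spectrum
$$W = \Spec_Y \calA(\calM_1, \ldots, \calM_k).$$
Since the symmetric power of a line bundle is just its tensor power, one has $\calA(\calM_1,\ldots,\calM_k) = \Symm_{\calO_Y}(\calM_1\oplus\cdots\oplus\calM_k)$, so $W$ is the total space over $Y$ of the rank-$k$ vector bundle $\calM_1^{-1}\oplus\cdots\oplus\calM_k^{-1}$ (the inverse line bundles appearing because the relative spectrum of a symmetric algebra recovers the dual bundle). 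By construction the projection $p\colon W \to Y$ is affine with $p_*\calO_W = \calA(\calM_1,\ldots,\calM_k)$, whence
$$\grG(W, \calO_W) = \grG(Y, \calA(\calM_1,\ldots,\calM_k)) = A(\calM_1,\ldots,\calM_k).$$

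Next I would check that $W$ is an integral normal variety. As $Y$ is irreducible and $p$ is a vector-bundle projection, $W$ is irreducible, and it is reduced since it is smooth over $Y$; hence $W$ is integral. For normality I would use that it is local on the base: covering $Y$ by affine open subsets $U$ over which all the $\calM_i$ are trivial, one gets $p^{-1}(U)\cong U\times \mA^k$, with coordinate ring $\calO(U)[t_1,\ldots,t_k]$. A polynomial ring over a normal domain is again normal, so each $p^{-1}(U)$ is normal; as these cover $W$, the variety $W$ is normal.

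Finally I would conclude by the standard fact that the ring of global functions on a normal integral variety is a normal domain. Indeed $\grG(W,\calO_W)$ embeds in the function field $\mC(W)$ as the intersection $\bigcap_{w\in W}\calO_{W,w}$ of its local rings; each $\calO_{W,w}$ is integrally closed in $\mC(W)$ because $W$ is normal, and an arbitrary intersection of integrally closed subrings of a field is integrally closed. Hence $A(\calM_1,\ldots,\calM_k)=\grG(W,\calO_W)$ is integrally closed in its fraction field, that is, normal.

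The only delicate point is bookkeeping rather than substance: one must match the $\mN^k$-grading of $A(\calM_1,\ldots,\calM_k)$ with the fiber coordinates on $W$, so that $W$ really is the asserted total space and $p_*\calO_W$ really recovers the given multigraded algebra. Everything else --- irreducibility, the local product structure, and normality of a polynomial extension of a normal ring --- is routine; the substantive hypothesis that actually gets used is simply that $Y$ is normal.
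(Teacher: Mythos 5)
Your proof is correct, and its core is the same local computation the paper uses: over a trivializing affine open $U$ the algebra becomes $\calO(U)[t_1,\ldots,t_k]$, a polynomial ring over a normal domain. Where you differ is in the globalization. The paper introduces no auxiliary space: it takes a fraction $s_1/s_2$ integral over $A(\calM_1,\ldots,\calM_k)$, restricts the integral dependence to each trivializing $U$, concludes $(s_1/s_2)|_U \in A(\calM_1|_U,\ldots,\calM_k|_U)$ by the affine case, and glues over the cover. You instead geometrize via the relative spectrum $W = \Spec_Y \calA(\calM_1,\ldots,\calM_k)$, i.e.\ the total space of $\calM_1^{-1}\oplus\cdots\oplus\calM_k^{-1}$, check that $W$ is integral and normal by the same local computation, and then invoke the standard fact that $\grG(W,\calO_W) = \bigcap_{w\in W}\calO_{W,w}$ is integrally closed in $\mC(W)$, hence in its own fraction field. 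Your packaging buys the absorption of the gluing step into a quotable general lemma about normal integral varieties, at the cost of the bookkeeping you flag (identifying $\Symm_{\calO_Y}(\calM_1\oplus\cdots\oplus\calM_k)$ with $\calA(\calM_1,\ldots,\calM_k)$ and $p_*\calO_W$ with the multigraded algebra), which you handle correctly; the paper's version buys self-containedness, since the intersection-of-local-rings lemma is, unwound, exactly the restrict-and-glue argument it performs by hand.
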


\begin{proof}
Notice that $A(\calM_1, \ldots, \calM_k)$ is a domain since $Y$ is irreducible. Suppose first that $Y$ is affine and that $\calM_i \simeq \calO_Y$ for all $i=1, \ldots, k$. Then $A(\calM_1, \ldots, \calM_k) = \mC[Y][s_1, \ldots, s_k]$ is a polynomial ring with coefficients in $\mC[Y]$, hence it is normal since $Y$ is so.

Consider now the general case. Let $s_1, s_2 \in A(\calM_1, \ldots, \calM_k)$ and suppose that $s_1/s_2$ is integral over $A(\calM_1, \ldots, \calM_k)$. Let $U\subset Y$ be an affine open subset such that $\calM_i|_U$ is trivial for all $i$. Then $(s_1/s_2)|_U$ is integral over $A(\calM_1|_U, \ldots, \calM_k|_U)$, thus it belongs to $A(\calM_1|_U, \ldots, \calM_k|_U)$. The claim follows since we can cover $Y$ with affine open subsets $U \subset Y$ such that $\calM_i|_U$ is trivial for all $i=1,\ldots,k$.
\end{proof}

In particular we have
$$\wt A(Ge) = A(\calL_1, \ldots, \calL_m) = \grG(X,\calA(\calL_1, \ldots, \calL_m)).$$

Notice that, for all $i=1,\ldots,m$, the map $\phi_i \colon X \lra \mP(V(\grl_i))$ factors through $\mP(\grG(X,\calL_i)^*)$. Therefore the map $\phi = (\phi_1, \ldots, \phi_m)$ factors as follows
$$
	X \stackrel{\wt\phi}{\lra} \mP(\grG(X,\calL_1)^*) \times \ldots \times \mP(\grG(X,\calL_m)^*) \stackrel{\psi}{\dashrightarrow} \mP(V(\grl_1)) \times \ldots \times \mP(V(\grl_m)),
$$
where $\psi = (\psi_1, \ldots, \psi_m)$ is the canonical projection.

Consider the multiplication map
$$
	\bigoplus_{n\geq 0} \mathsf S^n \big( \wt A_1(Ge) \big) \lra A(\calL_1, \ldots, \calL_m):
$$
it is surjective by the assumption of Theorem~\ref{teo:normalizzazione}, and its kernel coincides with the homogeneous ideal of $\wt\phi(X)$. It follows that $\wt{Ge}$ is the multicone over $\wt\phi(X)$, whereas $\ol{Ge}$ is by assumption the multicone over $\phi(X)$.

By Proposition~\ref{prop: stabilizzatori} the map $\phi \colon X \lra \phi(X)$ is birational. Therefore the restriction of $\pi$ induces a birational map $\wt\phi(X) \lra \phi(X)$, and taking the affine multicones it follows that $p\colon \wt{Ge} \lra \ol{Ge}$ is birational as well. Therefore to conclude the proof of Theorem~\ref{teo:normalizzazione} we are left to show that $\widetilde A(Ge)$ is integral over $A(Ge)$. The argument of the following proof is due to M.~Brion (see \cite[Proposition 2.1]{CCM}).

\begin{proposition}
$\widetilde A(Ge)$ is an integral extension of $A(Ge)$.
\end{proposition}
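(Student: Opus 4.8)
The plan is to deduce integrality from a statement about the fibre of $p$ over the cone point, exploiting that everything in sight is graded. Both $A(Ge)$ and $\wt A(Ge)$ are $\mN$-graded by total degree $d=d_1+\ldots+d_m$ (the grading induced by the multicone structure), with degree-zero part $\mC$, and $\wt A(Ge)$ is a finitely generated $\mC$-algebra generated by its degree-one component $\wt A_1(Ge)=\bigoplus_i\grG(X,\calL_i)$ (here I use that the multiplication of sections is surjective, as assumed). Since the inclusion is the identity in degree zero, $A(Ge)_+=A(Ge)\cap\wt A(Ge)_+$, so a graded Nakayama argument shows that $\wt A(Ge)$ is a finite, hence integral, module over $A(Ge)$ if and only if $\wt A(Ge)/A(Ge)_+\wt A(Ge)$ is finite-dimensional over $\mC$, equivalently $\sqrt{A(Ge)_+\wt A(Ge)}=\wt A(Ge)_+$, equivalently the fibre $p^{-1}(0)$ over the cone point $0\in\ol{Ge}$ is set-theoretically the single point $\{0\}\subset\wt{Ge}$. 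So the whole proof reduces to proving $p^{-1}(0)=\{0\}$.

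To analyse this fibre I would first make $p$ explicit. By construction $p$ is the restriction to $\wt{Ge}$ of the linear map $\mathrm{pr}=\bigoplus_i\mathrm{pr}_i\colon\bigoplus_i\grG(X,\calL_i)^*\lra\bigoplus_i V(\grl_i)=V_\Pi$ whose projectivization is $\psi$, each $\mathrm{pr}_i$ being dual to the inclusion $V(\grl_i)^*\incluso\grG(X,\calL_i)$; its kernel $W_i=(V(\grl_i)^*)^\perp$ has projectivization the centre of $\psi_i$. The key geometric input is that $\wt\phi_i(X)$ is disjoint from $\mP(W_i)$ for every $i$. Indeed $\phi_i=\psi_i\circ\wt\phi_i$ is a morphism defined on all of $X$, which means precisely that the sub-linear system $V(\grl_i)^*\subset\grG(X,\calL_i)$ is base-point-free; and a base point of $V(\grl_i)^*$ is exactly a point $x$ with $\wt\phi_i(x)\in\mP(W_i)$. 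Equivalently, $\mathrm{pr}_i(\ell)\neq 0$ for every functional $\ell\in\grG(X,\calL_i)^*$ of the form ``evaluation at a point of $X$''.

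I would then realize $\wt{Ge}$ as a contraction of a vector bundle in order to reach all of its points. Let $E=\bigoplus_i\calL_i^{-1}$ with total space $\mathrm{Tot}(E)\to X$. Since the $\calL_i$ are globally generated, $\mathsf S^\bullet E^*=\bigoplus_{(d_1,\ldots,d_m)}\calL_1^{d_1}\otimes\ldots\otimes\calL_m^{d_m}$, so $\grG(\mathrm{Tot}(E),\calO)=\wt A(Ge)$ and the induced morphism $\beta\colon\mathrm{Tot}(E)\lra\wt{Ge}$ is proper (as $X$ is complete and the $\calL_i$ are semiample), hence surjective, and it contracts the zero section to $0$. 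A point $\beta(x,\xi)$ with $\xi=(\xi_1,\ldots,\xi_m)$ has $i$-th coordinate $v_i=\xi_i\cdot\mathrm{ev}_x$ after trivializing $\calL_i$ at $x$, whence $\mathrm{pr}_i(v_i)=\xi_i\cdot\mathrm{pr}_i(\mathrm{ev}_x)$. By the previous paragraph $\mathrm{pr}_i(\mathrm{ev}_x)\neq 0$, so $\mathrm{pr}_i(v_i)=0$ forces $\xi_i=0$. Now if $v\in p^{-1}(0)$, choose $(x,\xi)$ with $\beta(x,\xi)=v$; then $\mathrm{pr}(v)=0$ forces every $\xi_i=0$, so $(x,\xi)$ lies on the zero section and $v=\beta(x,\xi)=0$. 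Thus $p^{-1}(0)=\{0\}$ and, by the first paragraph, $\wt A(Ge)$ is integral over $A(Ge)$.

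The step I expect to require the most care is the middle one: checking that ``$\phi_i$ is a morphism'' genuinely translates into base-point-freeness of the sub-system $V(\grl_i)^*$, hence into the disjointness $\wt\phi_i(X)\cap\mP(W_i)=\varnothing$, and then matching this with the coordinate description of $\beta(x,\xi)$. The graded Nakayama reduction and the properness and surjectivity of $\beta$ are comparatively routine, but one must be careful that the grading used (total degree from the multicone structure) is exactly the one for which $A(Ge)_+=A(Ge)\cap\wt A(Ge)_+$, so that $p^{-1}(0)$ is really cut out by $A(Ge)_+\wt A(Ge)$.
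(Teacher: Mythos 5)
Your proof is correct, but it takes a genuinely different route from the paper's. The paper reproduces an argument of Brion: it pulls back $\phi\colon X\to \mP(V(\grl_1))\times\ldots\times\mP(V(\grl_m))$ to the total spaces of the multicone bundles, getting a projective morphism $\ol\phi\colon L_X\to L_Z$, so that $\ol\phi_*\calO_{L_X}$ is coherent and $\wt A(Ge)=\grG(L_Z,\ol\phi_*\calO_{L_X})$ is a finite module over $\grG(L_Z,\calO_{L_Z})$, whose image in $\wt A(Ge)$ is exactly $A(Ge)$; finiteness, hence integrality, follows at once. You instead reduce finiteness, via the Nullstellensatz and graded Nakayama, to the set-theoretic statement $p^{-1}(0)=\{0\}$, which you verify using base-point-freeness of the subsystems $V(\grl_i)^*\subset\grG(X,\calL_i)$ (precisely the fact that $\phi_i$ is everywhere defined) together with surjectivity of the contraction $\beta$. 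Note that your $\mathrm{Tot}(E)$ is the paper's $L_X$, and your properness input is the same completeness-of-$X$ fact underlying the paper's projectivity of $\ol\phi$; what differs is the finiteness mechanism, coherence of projective pushforward versus elementary graded algebra on the zero fibre. Two trade-offs: your argument genuinely needs the standing hypothesis that multiplication of sections is surjective (degree-one generation is what embeds $\wt{Ge}$ in $\bigoplus_i\grG(X,\calL_i)^*$ and identifies $p$ with the restriction of $\mathrm{pr}$), whereas the paper's proof of this proposition uses only the multicone assumption; on the other hand, yours avoids the coherence theorem and gives the sharper geometric conclusion that the fibre over the cone point is a single point. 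A presentational nit: properness of $\beta$ is best justified by the closed embedding $(x,\xi)\mapsto\bigl(x,(\xi_i\circ\mathrm{ev}_x)_i\bigr)$ of $\mathrm{Tot}(E)$ into $X\times\bigoplus_i\grG(X,\calL_i)^*$ with $X$ complete; semiampleness as such is not the point, global generation enters only to make this map well defined.
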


\begin{proof}
Denote $Z = \mP(V(\grl_1)) \times \ldots \times \mP(V(\grl_m))$, for $i=1, \ldots, m$ let $p_i \colon Z \lra \mP(V(\grl_i))$ be the projection. Denote $\calA_Z = \calA(p_1^*\calO(1),\ldots,p_m^*\calO(1))$, a sheaf of $\calO_Z$-algebras, and set $L_Z = \Spec \grG(Z,\calA_Z)$. Similarly denote $\calA_X = \calA(\calL_1,\ldots,\calL_m)$, a sheaf of $\calO_X$-algebras, and set $L_X = \Spec \grG(X,\calA_X)$. Then we have a pullback diagram
\[
    \xymatrix
    {
        L_X \ar@{->}[rr]^{\ol\phi} \ar@{->}[d] & & L_Z \ar@{->}[d] \\
        X  \ar@{->}[rr]^{\phi} & & Z 
    }
\]
Notice that $\wt A(Ge) = \grG(L_X,\calO_{L_X}) = \grG(L_Z,\ol\phi_* \calO_{L_X})$, whereas $A(Ge)$ is the image of the natural morphism $\grG(L_Z,\calO_{L_Z}) \lra \grG(L_Z,\ol\phi_* \calO_{L_X})$. Notice that $\ol\phi$ is projective, so that $\ol\phi_* \calO_{L_X}$ is a coherent sheaf on $L_Z$. Hence $\grG(L_Z,\ol\phi_* \calO_{L_X})$ is a finitely generated $\grG(L_Z,\calO_{L_Z})$-module, or equivalently $\wt A(Ge)$ is a finitely generated $A(Ge)$-module.
\end{proof}

%%%%%%%%%%%%%%%%%%%%%%%%%%%%%%%%%%%%%%%%%%%%%%%%%%
%%%%%%%%%%%%%%%%%%%%%%%%%%%%%%%%%%%%%%%%%%%%%%%%%%
\section{The spherical systems}\label{s:1}
%%%%%%%%%%%%%%%%%%%%%%%%%%%%%%%%%%%%%%%%%%%%%%%%%%
%%%%%%%%%%%%%%%%%%%%%%%%%%%%%%%%%%%%%%%%%%%%%%%%%%

For the notation and the generalities about wonderful subgroups and Luna spherical systems we refer to our previous paper \cite[Section 1]{BCG}.

In this section we compute the wonderful subgroups of $K$ associated with the spherical systems given in the tables of Appendix~\ref{B}. Since, as one can check case-by-case, they correspond up to conjugation to the normalizers of the centralizers $\mathrm N_K(K_e)$ of the representatives $e$ given in Appendix~\ref{A}, we obtain the following.

\begin{proposition} The Luna spherical systems of the wonderful $K$-varieties associated to the spherical nilpotent $K$-orbits in $\mathfrak p$, for the classical symmetric pairs of Hermitian type, are those given in Tables~7--11.
\end{proposition}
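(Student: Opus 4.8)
The proposition is a case-by-case identification, and the logical backbone is the classification of wonderful $K$-varieties by their Luna spherical systems (recalled in \cite[Section~1]{BCG}): a wonderful $K$-variety is determined up to isomorphism by its spherical system, equivalently the $K$-conjugacy classes of wonderful subgroups of $K$ are in bijection with the spherical systems on the root datum of $K$. Since the introduction already records that each $\mathrm N_K(K_e)$ is wonderful (its homogeneous space admits a wonderful compactification by Knop's theorem, the orbit $Ke$ being spherical), it suffices to verify, for every spherical nilpotent orbit $Ke$ in $\gop$ and for each of the four classical Hermitian families, that $\mathrm N_K(K_e)$ is conjugate in $K$ to the wonderful subgroup attached to the spherical system listed in the corresponding row of Tables~7--11. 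I would establish this by computing both sides independently and matching them.

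\textbf{Computing $\mathrm N_K(K_e)$.} The plan is to start from the explicit data of Appendix~\ref{A}: the representative $e=e_1+e_2$ with $e_i\in\gop_i$, realized as the nilpositive part of a normal triple $\{h,e,f\}$, together with the centralizer $K_e$. By Proposition~\ref{prop: stabilizzatori} one has $\mathrm N_K(K_e)=K_{\pi(e)}$, the stabilizer of the pair of lines $([e_1],[e_2])\in\mP(\gop_1)\times\mP(\gop_2)$; in the degenerate \emph{cone} cases, where $e_1$ or $e_2$ vanishes, this collapses to the stabilizer of a single line in one factor. Because $K$ is a product of classical groups acting on $\gop_1$ and $\gop_2$ through standard representations, this stabilizer is a concrete linear-algebra computation: writing $e_1$ and $e_2$ as explicit matrices (or tensors), one solves for the $k\in K$ preserving both lines. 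One then reads off the Levi factor and the unipotent radical of $\mathrm N_K(K_e)$, and checks that the reductive quotient $\mathrm N_K(K_e)/K_e$ is the $2$-dimensional torus predicted in the bicone case (respectively $1$-dimensional in the cone cases), which pins down the spherical rank.

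\textbf{Reconstruction and matching.} Conversely, for each spherical system $\sigma$ in the tables I would reconstruct the associated wonderful subgroup $H_\sigma\subset K$ up to conjugation, using the explicit dictionary between spherical systems and wonderful subgroups available for classical groups (localization to rank-one and rank-two cases, together with the explicit geometric models in the references): from the distinguished set of simple roots one recovers the parabolic $P(S^p)\supset H_\sigma$, from the spherical roots $\grS$ and the colors $\mathbf A$ the reductive part and the remaining spherical-closure data. Comparing $H_\sigma$ with the subgroup $\mathrm N_K(K_e)$ obtained above finishes each case. A more self-contained alternative, which I would use as a cross-check, is to compute the invariant of $K/\mathrm N_K(K_e)$ directly: $S^p$ as the simple roots whose minimal parabolic fixes $[e_1]$ and $[e_2]$, the spherical roots $\grS$ from the weights arising in the relevant rank-one degenerations, and the colors and Cartan pairing from the $B$-stable divisors.

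\textbf{Main obstacle.} The difficulty here is not conceptual but combinatorial: the verification ranges over all four classical Hermitian families and, within each, over the full list of spherical nilpotent orbits, with several spherical roots of compound or non-reduced type, so the colors and the Cartan pairings must be tracked with care. The degenerate cone cases must be separated, since there $\pi$ and the spherical system have strictly smaller rank. The most error-prone step is getting the spherical roots and color pairings of $\mathrm N_K(K_e)$ exactly right; fortunately the surjectivity of multiplication proved in Section~\ref{s:2} and the resulting semigroup computations of Section~\ref{s:4} provide an independent confirmation that the identified spherical systems are the correct ones.
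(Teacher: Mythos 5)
Your proposal is correct and follows essentially the same route as the paper: Section~\ref{s:1} likewise proves the proposition by reconstructing, case by case, the wonderful subgroup attached to each spherical system in Tables~7--11 (via localization at $\supp\Sigma$, the known symmetric/reductive cases of \cite{BP15}, and parabolic induction/quotients for the remaining cases) and matching it, up to conjugation, with $\mathrm N_K(K_e) = K_{\pi(e)}$ as given in Appendix~\ref{A}. The only caveat is your side remark that the dimension of $\mathrm N_K(K_e)/K_e$ ``pins down the spherical rank'' --- it is only a consistency check, not a determination of the rank --- but this plays no logical role in your argument.
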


We start with the spherical systems whose set $\Sigma$ is empty: 
\begin{itemize}
\item Table~7: 1.1 ($r=1$), 1.2 ($r=1$), 1.3 ($r=s=1$), 1.6 ($r=s=0$ and $q=1$), 1.7 ($r=s=0$ and $p=1$);
\item Table~8: 2.1, 2.2; 
\item Table~9: 3.1 ($r=1$), 3.2 ($r=1$), 3.3 ($r=s=1$); 
\item Table~10: 4.1, 4.2; 
\item Table~11: 5.1 ($r=1$), 5.2 ($r=1$), 5.3 ($r=s=1$).
\end{itemize}
Here it is immediate to see that the parabolic subgroup $Q$ of $K$ given in Appendix~\ref{A} is the wonderful subgroup associated with the corresponding spherical $K$-system. 

For the remaining spherical systems, let $M$ be the Levi subgroup of $K$ corresponding with $\supp\Sigma$. We will just compute the wonderful subgroup $H$ of $M$ associated with the $M$-spherical system obtained from the given $K$-spherical system by localization in $\supp\Sigma$. It is then immediate to deduce from $H$ which is the wonderful subgroup $K$ associated to the given $K$-spherical system and check that it is equal, up to conjugation, to the normalizer of $K_e$ given in the corresponding case of Appendix~\ref{A}.

\subsection{Symmetric cases}\label{ss:symmetric cases}

From the following $K$-spherical systems, by localizing in $\supp\Sigma$, we get the $M$-spherical systems of symmetric subgroups of $M$. Since these are well-known, we just provide a reference:
\begin{itemize}
\item in the cases 1.1 ($r>1$), 1.2 ($r>1$), 1.3 ($r>1$ or $s>1$) of Table~7 we get the case 2 of \cite{BP15}, or the direct product of two of them; 
\item in the cases 1.6 ($r=s=0$ and $q>1$), 1.7 ($r=s=0$ and $p>1$) of Table~7, 2.4 of Table~8, 4.4 of Table~10, 5.4 of Table~11 we get the case 3 of \cite{BP15};
\item in the case 2.3 of Table~8 we get the case 9 of \cite{BP15};
\item in the cases 3.1 ($r>1$), 3.2 ($r>1$), 3.3 ($r>1$ or $s>1$) of Table~9 we get the case 5 of \cite{BP15}, or the direct product of two of them;
\item in the case 4.3 of Table~10 we get the case 15 of \cite{BP15};
\item in the cases 5.1 ($r>1$), 5.2 ($r>1$), 5.3 ($r>1$ or $s>1$) of Table~11 we get the case 6 of \cite{BP15}, or the direct product of two of them.
\end{itemize}

\subsection{Other reductive cases}\label{ss:reductive}

From the $K$-spherical systems 1.4 and 1.5 of Table~7, by localizing in $\supp\Sigma$, we get the $M$-spherical system of a wonderful reductive (but not symmetric) subgroup of $M$: the case 43 of \cite{BP15}.

\subsection{Morphisms of type $\mathcal L$}\label{ss:typeL}

In the remaining cases, 1.6 ($r+s>0$) and 1.7 ($r+s>0$) of Table~7, the spherical $K$-system $(S^\mathrm p,\Sigma,\mathrm A)$ admits a distinguished set of colors $\Delta'$ such that the corresponding quotient 
\[(S^\mathrm p/\Delta',\Sigma/\Delta',\mathrm A/\Delta')\] 
is the spherical system of a wonderful $K$-variety which is obtained by parabolic induction from a wonderful $K_h$-variety. 
Such distinguished set of colors $\Delta'$ may be not minimal, let us describe it in the case~1.6, the other one is similar.
We assume $r$ and $s$ both to be non-zero and $r+s<q-1$, in the other cases the description is similar but simpler. 

Under this assumption, localizing the $K$-spherical system of the case~1.6 in $\supp\Sigma$, 
we obtain the following spherical system,
which we label as $\mathsf{a^y}(r,r)+\mathsf a(t)+\mathsf{a^y}(s,s)$ (here $t=q-r-s-1$)
for a group $M$ of semisimple type $\mathsf A_r\times\mathsf A_{r+s+t}\times\mathsf A_s$:
\begin{itemize}
\item[]$S^\mathrm p=\{\alpha'_{r+2},\ldots,\alpha'_{r+t-1}\}$,
\item[]$\Sigma=\{\alpha_1,\ldots,\alpha_r,\ \alpha'_1,\ldots,\alpha'_r,\ \alpha'_{r+1}+\ldots+\alpha'_{r+t},\ \alpha'_{r+t+1},\ldots,\alpha'_{r+t+s},\ \alpha''_1,\ldots,\alpha''_s)\}$,
\item[]$\Delta=\{D_1,\ldots,D_{2r+1},\ D_{2r+2},D_{2r+3},\ D_{2r+4},\ldots,D_{2r+2s+4}\}$,
\item[] and full Cartan pairing
\begin{itemize}
\item[]$\alpha_1=D_1+D_2-D_3$,
\item[]$\alpha_i=-D_{2i-2}+D_{2i-1}+D_{2i}-D_{2i+1}$ for $2\leq i\leq r$,
\item[]$\alpha'_i=-D_{2i-1}+D_{2i}+D_{2i+1}-D_{2i+2}$ for $1\leq i\leq r$,
\item[]$\alpha'_{r+1}+\ldots+\alpha'_{r+t}=-D_{2r+1}+D_{2r+2}+D_{2r+3}-D_{2r+4}$,
\item[]$\alpha'_{r+t+i}=-D_{2r+2i+1}+D_{2r+2i+2}+D_{2r+2i+3}-D_{2r+2i+4}$ for $1\leq i\leq s$,
\item[]$\alpha''_i=-D_{2r+2i+2}+D_{2r+2i+3}+D_{2r+2i+4}-D_{2r+2i+5}$ for $1\leq i\leq s-1$,
\item[]$\alpha''_s=-D_{2r+2s+2}+D_{2r+2s+3}+D_{2r+2s+4}$.
\end{itemize}
\end{itemize}
Notice that we have re-enumerated the simple roots of $M$ to simplify the notation.

If $t>1$ the Luna diagram is as follows.
\[\begin{picture}(25800,3300)(-300,-1500)
\multiput(0,0)(15300,0){2}{
\multiput(0,0)(8100,0){2}{\usebox{\edge}}
\multiput(1800,0)(4500,0){2}{\usebox{\shortsusp}}
\multiput(0,0)(6300,0){2}{
\multiput(0,0)(1800,0){3}{\usebox{\aone}}}
\multiput(0,1800)(6300,0){2}{\line(0,-1){900}}
\put(0,1800){\line(1,0){6300}}
\multiput(1800,1500)(6300,0){2}{\line(0,-1){600}}
\put(1800,1500){\line(1,0){3150}}
\multiput(4950,1500)(1650,0){2}{\multiput(150,0)(300,0){3}{\line(1,0){150}}}
\put(7650,1500){\line(1,0){450}}
\multiput(3600,1200)(6300,0){2}{\line(0,-1){300}}
\put(3600,1200){\line(1,0){2600}}
\put(6400,1200){\line(1,0){1600}}
\put(8200,1200){\line(1,0){1700}}
\multiput(1800,-1500)(4500,0){2}{\line(0,1){600}}
\put(1800,-1500){\line(1,0){4500}}
\multiput(3600,-1200)(4500,0){2}{\line(0,1){300}}
\put(3600,-1200){\line(1,0){1350}}
\multiput(4950,-1200)(1650,0){2}{\multiput(150,0)(300,0){3}{\line(1,0){150}}}
\put(7650,-1200){\line(1,0){450}}
\multiput(0,600)(1800,0){2}{\usebox{\toe}}
\multiput(8100,600)(1800,0){2}{\usebox{\tow}}
}
\multiput(9900,0)(3600,0){2}{\usebox{\edge}}
\put(11700,0){\usebox{\shortsusp}}
\put(11400,-300){\multiput(300,300)(1800,0){2}{\circle{600}}\multiput(300,300)(25,25){13}{\circle*{70}}\multiput(600,600)(300,0){4}{\multiput(0,0)(25,-25){7}{\circle*{70}}}\multiput(750,450)(300,0){4}{\multiput(0,0)(25,25){7}{\circle*{70}}}\multiput(2100,300)(-25,25){13}{\circle*{70}}}
\end{picture}\]

Let us consider the following subsets of colors:
\begin{itemize}
\item[]$\Delta_1=\{D_{2i}\,:\,1\leq i\leq r\}$,
\item[]$\Delta_2=\{D_{2r+2i+3}\,:\,1\leq i\leq s\}$.
\end{itemize}
Notice that $\Delta_1$ (resp.\ $\Delta_2$) consists of the colors represented by circles above vertices on the left (resp.\ right) hand side of the diagram.
Both are minimal distinguished with quotient of higher defect, see \cite[Section~1.5.5]{BCG}.
The quotient by $\Delta'=\Delta_1\cup\Delta_2$ is as follows.
\[\Sigma/\Delta'=\{\alpha_2+\alpha'_1,\ldots,\alpha_r+\alpha'_{r-1},\ \alpha'_{r+1}+\ldots+\alpha'_{r+t},\ 
\alpha'_{r+t+2}+\alpha''_{1},\ldots,\alpha'_{r+t+s}+\alpha''_{s-1}\}\]
\[\begin{picture}(25800,1950)(-300,-1050)
\multiput(0,0)(15300,0){2}{
\multiput(0,0)(8100,0){2}{\usebox{\edge}}
\multiput(1800,0)(4500,0){2}{\usebox{\shortsusp}}
\multiput(0,0)(6300,0){2}{
\multiput(0,0)(1800,0){3}{\usebox{\wcircle}}}
\multiput(1800,-1050)(4500,0){2}{\line(0,1){750}}
\put(1800,-1050){\line(1,0){4500}}
\multiput(3600,-750)(4500,0){2}{\line(0,1){450}}
\put(3600,-750){\line(1,0){2600}}
\put(6400,-750){\line(1,0){1700}}
}
\multiput(9900,0)(3600,0){2}{\usebox{\edge}}
\put(11700,0){\usebox{\shortsusp}}
\put(11400,-300){\multiput(300,300)(1800,0){2}{\circle{600}}\multiput(300,300)(25,25){13}{\circle*{70}}\multiput(600,600)(300,0){4}{\multiput(0,0)(25,-25){7}{\circle*{70}}}\multiput(750,450)(300,0){4}{\multiput(0,0)(25,25){7}{\circle*{70}}}\multiput(2100,300)(-25,25){13}{\circle*{70}}}
\end{picture}\]

This spherical system corresponds to a subgroup of $M$ which is a parabolic induction of a symmetric subgroup, that is, it can be decomposed as $LP^\mathrm{u}$ (a Levi decomposition) where $P=L_PP^\mathrm{u}$ is the parabolic subgroup of $M$ corresponding to $\supp(\Sigma/\Delta')$ and $L$ is a symmetric subgroup of $L_P$ corresponding to
\[\begin{picture}(25800,1950)(-300,-1050)
\multiput(0,0)(15300,0){2}{
%\multiput(0,0)(8100,0){2}{\usebox{\edge}}
\multiput(1800,0)(4500,0){2}{\usebox{\shortsusp}}
\multiput(0,0)(4500,0){2}{
\multiput(1800,0)(1800,0){2}{\usebox{\wcircle}}}
\multiput(1800,-1050)(4500,0){2}{\line(0,1){750}}
\put(1800,-1050){\line(1,0){4500}}
\multiput(3600,-750)(4500,0){2}{\line(0,1){450}}
\put(3600,-750){\line(1,0){2600}}
\put(6400,-750){\line(1,0){1700}}
}
%\multiput(9900,0)(3600,0){2}{\usebox{\edge}}
\put(11700,0){\usebox{\shortsusp}}
\put(11400,-300){\multiput(300,300)(1800,0){2}{\circle{600}}\multiput(300,300)(25,25){13}{\circle*{70}}\multiput(600,600)(300,0){4}{\multiput(0,0)(25,-25){7}{\circle*{70}}}\multiput(750,450)(300,0){4}{\multiput(0,0)(25,25){7}{\circle*{70}}}\multiput(2100,300)(-25,25){13}{\circle*{70}}}
\end{picture}\]
More explicitely, $L$ is the normalizer of $\mathrm{SL}(r)\times\mathrm{GL}(t)\times\mathrm{SL}(s)$ in $L_P$ of semisimple type $\mathsf A_r\times\mathsf A_r\times\mathsf A_t\times\mathsf A_s\times\mathsf A_s$, where $\mathrm{SL}(r)$ is skew-diagonal in $\mathrm{SL}(r)\times\mathrm{SL}(r)$ and $\mathrm{SL}(s)$ is skew-diagonal in $\mathrm{SL}(s)\times\mathrm{SL}(s)$.

The wonderful subgroup $H$ associated with the above spherical $M$-system $\mathsf{a^y}(r,r)+\mathsf a(t)+\mathsf{a^y}(s,s)$ can therefore be taken as $L_HH^\mathrm u$, with $L_{H}\subset L$ and $H^\mathrm u\subset P^\mathrm u$, where $\mathrm{Lie}\,P^\mathrm u/\mathrm{Lie}\,H^\mathrm u$ is the direct sum of two simple $L_{H}$-modules while $L_{H}$ and $L$ differ only by their connected center. 

The codimension of $L_{H}$ in $L$ is equal to the increase in defect, which in this case is equal to 2.

The unipotent radical $\mathrm{Lie}\,H^\mathrm u$ is as follows. 
There exist $W_{0,1}$ and $W_{1,1}$ $L_{H}$-submodules of $\mathrm{Lie}\,P^\mathrm u$ of dimension $r$, isomorphic as $L_{H}$-modules but not as $L$-modules. Analogously there exist $W_{0,2}$ and $W_{1,2}$ $L_{H}$-submodules of $\mathrm{Lie}\,P^\mathrm u$ of dimension $s$, isomorphic as $L_{H}$-modules but not as $L$-modules. Denoting by $V$ the $L_{H}$-complement of $W_{0,1}\oplus W_{1,1}\oplus W_{0,2} \oplus W_{1,2}$ in $\mathrm{Lie}\,P^\mathrm u$, as $L_{H}$-module we have
\[\mathrm{Lie}\,H_1^\mathrm u=W_1\oplus W_2\oplus V\]
where $W_1$ is a simple $L_{H}$-submodule of $W_{0,1}\oplus W_{1,1}$ which projects non-trivially on both summands,
and analogously $W_2$ is a simple $L_{H}$-submodule of $W_{0,2}\oplus W_{1,2}$ which projects non-trivially on both summands. 

\begin{remark}
Notice that the given subsets of colors $\Delta_1$ and $\Delta_2$ decompose the above spherical system in the sense of \cite[Definition 2.2.2]{BP16}, hence the corresponding wonderful variety is a non-trivial wonderful fiber product. 
\end{remark}

%%%%%%%%%%%%%%%%%%%%%%%%%%%%%%%%%%%%%%%%%%%%%%%%%%
%%%%%%%%%%%%%%%%%%%%%%%%%%%%%%%%%%%%%%%%%%%%%%%%%%
\section{Projective normality}\label{s:2}
%%%%%%%%%%%%%%%%%%%%%%%%%%%%%%%%%%%%%%%%%%%%%%%%%%
%%%%%%%%%%%%%%%%%%%%%%%%%%%%%%%%%%%%%%%%%%%%%%%%%%

In this section we prove the following result, that will be used to study the normality of 
the spherical nilpotent $K$-orbit closures in $\gop$. Reasoning as in Section \ref{s:3}, recall that every spherical nilpotent $K$-orbit in $\gop$ determines naturally a wonderful $K$-variety. 

\begin{theorem}	\label{teo: projnorm}
Let $(\mathfrak g,\mathfrak k)$ be a classical symmetric pair of Hermitian type, let $\calO \subset \gop$ be a spherical nilpotent $K$-orbit and let $X$ be the wonderful $K$-variety associated to $\calO$. Then the multiplication of sections $m_{\calL,\calL'}$ is surjective for all globally generated line bundles $\calL, \calL' \in \Pic(X)$.
\end{theorem}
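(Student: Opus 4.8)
The plan is to use the explicit spherical systems computed in Section~\ref{s:1} to reduce the surjectivity of $m_{\calL,\calL'}$ to a short list of building blocks. The basic computational tool is the decomposition \eqref{eq:decomposition}: since $\Gamma(X,\calL_E)$ is spanned by the subspaces $s^{E-F}V_F$ with $F\leq_\Sigma E$ (and similarly for $\calL_{E'}$), and since $s^{E-F}V_F\cdot s^{E'-F'}V_{F'}=s^{E+E'-F-F'}\,(V_F\cdot V_{F'})$, the map $m_{\calL,\calL'}$ is surjective as soon as, for every $H\leq_\Sigma E+E'$, the isotypic piece $s^{E+E'-H}V_H$ of the target lies in the image. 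Thus the whole question is whether the products $V_F\cdot V_{F'}$ of the modules generated by canonical sections, corrected by the invariants $s^\gamma$, fill out $\Gamma(X,\calL_{F+F'})$ for all colors $F,F'$.

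First I would record that this surjectivity is stable under the two structural operations that appear in Section~\ref{s:1}. It is preserved under direct products of wonderful varieties, and, more importantly, since every spherical root of $X$ is by definition supported in $\supp\Sigma$, the question for $X$ reduces to the same question for its localization $X_0$ at $\supp\Sigma$: the colors outside $\supp\Sigma$ contribute only factors pulled back from partial flag varieties of $K$, on which multiplication of sections of globally generated line bundles is surjective by Borel--Weil. Hence it suffices to treat the localized $M$-wonderful varieties described in Subsections~\ref{ss:symmetric cases}, \ref{ss:reductive} and \ref{ss:typeL}.

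For the symmetric cases of Subsection~\ref{ss:symmetric cases} the localized variety is the wonderful compactification of a symmetric space --- cases 2, 3, 5, 6, 9 and 15 of \cite{BP15} --- or a product of two such; here the desired surjectivity is the projective normality of complete symmetric varieties, which is classical (De~Concini--Procesi, Chiriv\`i--Maffei). There remain the reductive non-symmetric case~43 of Subsection~\ref{ss:reductive} and the parabolic-induction cases 1.6 and 1.7 of Subsection~\ref{ss:typeL}. For these I would argue directly from the combinatorial data displayed in Section~\ref{s:1}: from the explicit set of colors and Cartan pairing one reads off the order $\leq_\Sigma$, and then checks via \eqref{eq:decomposition} that every simple constituent $s^{E+E'-H}V_H$ of $\Gamma(X,\calL_{E+E'})$ already occurs in the image of $m_{\calL,\calL'}$.

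The main obstacle is exactly this last verification in the non-symmetric cases, where no projective normality result is available off the shelf and one must show by hand that the Cartan product $V_F\cdot V_{F'}$ together with its lower terms exhausts $\Gamma(X,\calL_{F+F'})$. I expect this to reduce to a finite check after localizing further to the rank-one and rank-two wonderful subvarieties attached to single spherical roots and to pairs of adjacent spherical roots in the diagrams of Section~\ref{s:1}, since surjectivity of multiplication can be tested on such low-rank localizations; the rank-one localizations are automatic, so the genuine work is the rank-two analysis, which is finite and dictated by the shape of $\Sigma$.
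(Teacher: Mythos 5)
Your first two reductions are essentially the ones the paper makes: pass to the localization $Z$ of $X$ at $\supp\grS$, and dispose of the symmetric cases by the projective normality of complete symmetric varieties \cite{CM_projective-normality}. One caveat already here: the passage from $X$ to $Z$ is not a formal consequence of ``colors outside $\supp\grS$ come from flag varieties.'' It requires that $X$ actually \emph{be} a parabolic induction of $Z$ (Lemma~\ref{lem: projnorm parabolic induction}), which is a structural fact about these particular wonderful subgroups, verified case by case through the descriptions in Section~\ref{s:1}; it is not a general property of wonderful varieties, and Borel--Weil is not a substitute for it.

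The genuine gap is in your last paragraph, which is exactly where all the work of the theorem lies. There is no principle allowing surjectivity of $m_{\calL,\calL'}$ to be ``tested'' on rank-one and rank-two localizations: Lemma~\ref{lem: projnorm localizzazioni} transfers surjectivity from $X$ \emph{down} to its wonderful subvarieties, never upward, and no converse is available (if it were, projective normality of arbitrary wonderful varieties would reduce to a finite list of low-rank checks, which is not the case). Surjectivity is a statement about products inside $\mC[K]^{(H)}$ for the full generic stabilizer $H$ of $X$, and it cannot be assembled from the localizations. Precisely the two families you defer to this non-existent principle are the ones not already in the literature. For cases 1.4 and 1.5 the localized variety is $\mathsf{a^x}(1,1,1)$, of rank three, and the paper proves surjectivity there by a self-contained representation-theoretic argument for $\mathrm{SL}(2)^{3}$: identification of $\mN\grD$ with the Clebsch--Gordan tensor semigroup, the matrix-coefficient Lemma~\ref{lemma:prodotti}, and a three-case induction; nothing of this sort can be bypassed by localizing, since e.g.\ the covering difference $\grs_2+\grs_3=2D_1$ must be handled inside the full ring. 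For cases 1.6 and 1.7 the localized variety $\mathsf{a^y}(r,r)+\mathsf a(t)+\mathsf{a^y}(s,s)$ has arbitrarily large rank, and the paper does not check it directly: it realizes a parabolic induction of $Z$ as a quotient of a wonderful subvariety of the comodel wonderful variety of cotype $\sfA_{2(r+t+s)}$, and then invokes \cite[Theorem~5.2]{BGM} together with Lemmas~\ref{lem: projnorm localizzazioni}, \ref{lem: projnorm quotients} and~\ref{lem: projnorm parabolic induction}. Without either the direct $\mathsf{a^x}(1,1,1)$ computation or the comodel embedding (or substitutes for them), your plan does not prove the theorem.
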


Some generalities on line bundles on wonderful varieties and their sections have already been recalled in Section \ref{s:3}, we keep on using the notation introduced in \cite[Section 1]{BGM}.

\subsection{General reductions}\label{ss:General reductions}

The surjectivity of the multiplication of sections of globally generated line bundles on a wonderful variety is known for some important families, namely for the \textit{wonderful symmetric varieties} (see \cite{CM_projective-normality}), for the \textit{wonderful model varieties} (of simply-connected type) and for the \textit{wonderful comodel varieties} (see \cite{BGM}).

We will essentially reduce the proof of Theorem \ref{teo: projnorm} to the known cases recalled above, by making use of the operations of localization, quotient and parabolic induction on spherical systems, which provide us with some reduction steps in the study of the multiplication maps. We recall such reductions, which are proved in \cite{BCG} and \cite{BGM}, and then we will apply them to the cases under consideration.

\begin{lemma}[{\cite[Lemma 2.4]{BCG}}]	\label{lem: projnorm localizzazioni}
Let $X$ be a wonderful variety and let $X' \subset X$ be a wonderful subvariety. If $m_{\calL, \calL'}$ is surjective for all globally generated $\calL, \calL' \in \Pic(X)$, then $m_{\calL,\calL'}$ is surjective for all globally generated $\calL, \calL' \in \Pic(X')$.
\end{lemma}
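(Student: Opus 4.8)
The plan is to compare the two multiplication maps through the restriction homomorphism, reducing the whole statement to two facts: that every globally generated line bundle on $X'$ is the restriction of a globally generated line bundle on $X$, and that restriction of global sections is surjective for such bundles. Once these are in place, the conclusion is a formal diagram chase.

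First I would recall the structure of a wonderful subvariety. Being a wonderful closed stable subvariety, $X'$ is a \emph{localization} of $X$ (in the sense of the structure theory recalled in \cite[Section~1]{BCG}): its colors are the traces on $X'$ of the colors of $X$, and its set of spherical roots is the corresponding sublattice obtained by restriction. In particular the restriction of colors is surjective, so that the restriction map $\Pic(X) \lra \Pic(X')$ carries the semigroup $\mN\grD$ of globally generated line bundles of $X$ onto the semigroup of globally generated line bundles of $X'$. Hence, given globally generated $\calM, \calM' \in \Pic(X')$, I may choose globally generated $\calL, \calL' \in \Pic(X)$ with $\calL|_{X'} = \calM$ and $\calL'|_{X'} = \calM'$. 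This step is bookkeeping about how colors and spherical roots restrict, and I expect it to be routine.

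The crux is to show that for every globally generated $\calL \in \Pic(X)$ the restriction map on sections
\[
	\grG(X,\calL) \lra \grG(X', \calL|_{X'})
\]
is surjective. The route I would actually take is to read this off directly from the decomposition \eqref{eq:decomposition}. Writing $\calL = \calL_E$ with $E \in \mN\grD$, the canonical section $s^\grg$ attached to a spherical root $\grg$ of $X$ restricts on $X'$ to the canonical section of the analogous divisor when $\grg$ remains a spherical root of $X'$, and to $0$ otherwise (since then $X' \subset X_\grg$). Consequently each summand $s^{E-F} V_F$ of $\grG(X,\calL_E)$ either maps onto the corresponding summand of $\grG(X',\calL_E|_{X'})$ — because the module $V_F$ generated by the canonical section of $\calL_F$ restricts onto its counterpart on $X'$ — or else restricts to zero; and the surviving summands, indexed exactly by those $F \leq_{\grS} E$ whose defect $E-F$ lies along the retained spherical roots, exhaust the right-hand decomposition of $\grG(X',\calL_E|_{X'})$. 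This yields surjectivity. An alternative would be to peel off the removed $G$-stable prime divisors $X_\grg$ one at a time through the sequences $0 \to \calL(-X_\grg) \to \calL \to \calL|_{X_\grg} \to 0$, but this requires an $H^1$-vanishing for the non globally generated twists $\calL(-X_\grg)$, which is precisely the delicate point I would rather circumvent via \eqref{eq:decomposition}.

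Finally I would assemble the commutative square of restriction maps
\[
\begin{array}{ccc}
\grG(X,\calL)\otimes\grG(X,\calL') & \xrightarrow{\ m_{\calL,\calL'}\ } & \grG(X,\calL\otimes\calL')\\
\downarrow & & \downarrow\\
\grG(X',\calL|_{X'})\otimes\grG(X',\calL'|_{X'}) & \xrightarrow{\ m'\ } & \grG(X',(\calL\otimes\calL')|_{X'})
\end{array}
\]
The top arrow is surjective by hypothesis, and the right vertical arrow is surjective by the previous step; hence the composite along the top and then down the right is surjective. Since it factors through the bottom arrow $m'$, the multiplication $m'$ on $X'$ is surjective, which is exactly the assertion for the pair $\calM, \calM'$. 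The \emph{main obstacle} is the middle step: establishing that the surviving terms of \eqref{eq:decomposition} fill all of $\grG(X',\calL|_{X'})$ and that none of the target is missed, i.e.\ the exact compatibility of the canonical-section decomposition with passage to the subvariety; everything else is either standard structure theory or a formal diagram chase.
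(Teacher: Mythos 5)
Note first that this paper does not prove the lemma at all: it is imported from \cite[Lemma 2.4]{BCG}, so your proposal has to be judged against what any correct proof must contain.

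Your first step is false, and the whole reduction collapses with it. It is not true that every globally generated line bundle on a wonderful subvariety $X'$ is the restriction of a globally generated line bundle on $X$: equivalently, the map $\mN\grD \lra \mN\grD'$ induced by restriction need not be surjective. What the structure theory gives is only that every color of $X'$ occurs as a \emph{component} of the trace $D \cap X'$ of some color $D$ of $X$; this trace can be $2D'_\alpha$ (when $D=D_\alpha$ is of type $2a$, i.e.\ $2\alpha\in\grS$, and $2\alpha$ is one of the removed spherical roots), or $D'_\alpha + D'_\beta$ (when a color $D_\alpha=D_\beta$ moved by two simple roots separates into two distinct colors of $X'$). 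Concretely, take $X=\mP(\mathfrak{sl}_2)\cong\mP^2$ with $\mathrm{SL}(2)$ acting by the adjoint action (the wonderful compactification of $\mathrm{PGL}(2)/\mathrm N(T)$, spherical root $2\alpha$), and let $X'$ be its closed orbit, the conic of nilpotent classes, $X'\cong\mP^1$. The unique color of $X$ is a line tangent to the conic, so its trace on $X'$ is twice a point, and the restriction $\Pic(X)\lra\Pic(X')$ is $\calO(1)\mapsto\calO(2)$. Thus $\calO_{\mP^1}(1)$ is globally generated but lifts to nothing on $X$; worse, $\grG(X',\calO(1))\cong V(1)$ is not even a constituent of any $\grG(X,\calL)$ (these are sums of odd-dimensional $\mathrm{SL}(2)$-modules), so no restriction argument can reach it. The instance of the lemma for this pair, namely surjectivity of $\grG(\calO(1))\otimes\grG(\calO(1))\lra\grG(\calO(2))$, is Cartan multiplication on the flag variety $\mP^1$: true, but logically independent of anything happening on $X$. (The compactification $\mP(M_2)\supset\mP^1\times\mP^1$ of $\mathrm{PGL}(2)$ gives the splitting version of the same phenomenon, with restriction $\calO(1)\mapsto\calO(1,1)$.) So what your argument actually proves is the weaker statement that $m_{\calM,\calM'}$ is surjective for pairs $\calM,\calM'$ pulled back from $X$; the missing — and essential — idea is how to handle the colors of $X'$ that do not lift, and that is precisely where the content of the lemma lies.

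The rest of your plan is sound. The surjectivity of $\grG(X,\calL_E)\lra\grG(X',\calL_E|_{X'})$ for $E\in\mN\grD$ is a correct statement, and the canonical-section mechanism is the right one, although your "exhaustion" claim is exactly the point needing verification: one must check that for every $F'\leq_{\grS'}E|_{X'}$ the forced lift $F=E-\sum_{\grg\in\grS'}n_\grg\grg$ lies in $\mN\grD$, which holds because the colors that merge under restriction (the $D^{\pm}_\alpha$ with $\alpha\in\grS\senza\grS'$) pair non-positively with every $\grg\in\grS'$ by Luna's axioms. The final diagram chase is fine. It is also worth noting that in the situations where this paper actually invokes the lemma (wonderful subvarieties of comodel varieties, all of whose spherical roots are simple roots of type $a$), no color halves or splits under restriction, the map $\mN\grD\lra\mN\grD'$ \emph{is} onto, and your argument does go through; but as a proof of the lemma as stated it has a genuine gap.
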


\begin{lemma}[{\cite[Corollary 1.4]{BGM}}]	\label{lem: projnorm quotients}
Let $X$ be a wonderful variety with set of colors $\grD$, let $X'$ be a quotient of $X$ by a distinguished subset $\grD_0 \subset \grD$ with set of colors $\grD'$ and identify $\grD'$ with $\grD \senza \grD_0$. If $D \in \mN \grD$ and $\supp(D) \cap \grD_0 = \vuoto$ and if $\calL_D \in \Pic(X)$ and $\calL'_D \in \Pic(X')$ are the line bundles corresponding to $D$ regarded as an element in $\mN \grD$ and in $\mN\grD'$, then $\grG(X, \calL_D) = \grG(X', \calL'_D)$.

In particular, if $m_{D,E}$ is surjective for all $D,E \in \mN\grD$, then  $m_{D', E'}$ is surjective for all $D', E' \in \mN \grD'$.
\end{lemma}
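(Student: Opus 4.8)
The plan is to interpret the combinatorial quotient geometrically and then transport global sections across it by means of the projection formula. Write $\psi\colon X\lra X'$ for the $G$-equivariant morphism of wonderful varieties attached to the distinguished subset $\grD_0$. The two structural facts I would establish first are: \emph{(i)} $\psi$ is surjective with connected fibers, so that $\psi_*\calO_X=\calO_{X'}$ (both varieties being smooth projective, hence normal, this follows from Stein factorization); and \emph{(ii)} under the identification of $\grD'$ with $\grD\senza\grD_0$ one has $\psi^*\calL'_{D}=\calL_D$ for every $D\in\mN\grD'$ with $\supp(D)\cap\grD_0=\vuoto$.

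For \emph{(ii)}, the point is that colors pull back to colors without any $G$-stable correction. Indeed, for a color $D'\in\grD'$ the preimage $\psi^{-1}(D')$ is a $B$-stable divisor of $X$, none of whose components is $G$-stable: the image of a $G$-stable prime divisor is again $G$-stable and therefore cannot dominate the non-$G$-stable divisor $D'$. Thus $\psi^{-1}(D')$ equals, with multiplicity one, the color $D\in\grD\senza\grD_0$ corresponding to $D'$, which gives $\psi^*\calL'_{D'}=\calL_D$ and, by additivity, $\psi^*\calL'_D=\calL_D$ for every $D$ supported off $\grD_0$.

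With these in hand the first assertion is immediate from the projection formula:
\[\grG(X,\calL_D)=\grG(X,\psi^*\calL'_D)=\grG\bigl(X',\calL'_D\otimes\psi_*\calO_X\bigr)=\grG(X',\calL'_D).\]
For the ``in particular'' statement, note that if $\supp(D)\cap\grD_0=\supp(E)\cap\grD_0=\vuoto$ then the same holds for $D+E$, so all three relevant section spaces on $X$ are identified with the corresponding ones on $X'$. Since $\psi^*$ is a homomorphism of the total section algebras, the multiplication maps $m_{D,E}$ on $X$ and $m_{D',E'}$ on $X'$ fit into a commutative square whose vertical arrows are the isomorphisms just produced; surjectivity of $m_{D,E}$ therefore forces surjectivity of $m_{D',E'}$.

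I expect fact \emph{(i)}, the connectedness of the fibers of the quotient morphism, to be the only genuinely non-formal input: it belongs to the structure theory of wonderful quotients by distinguished sets of colors and can be quoted from the references, after which the rest is a routine consequence of the projection formula. As an alternative staying closer to the module-theoretic language of the paper, one could instead compare the decompositions \eqref{eq:decomposition} for $\grG(X,\calL_D)$ and $\grG(X',\calL'_D)$ directly, verifying that the quotient operation on spherical systems matches, on the two sides, precisely the summands $s^{D-F}V_F$ with $F\in\mN\grD'$ and $F\leq_\grS D$.
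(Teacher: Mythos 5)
The paper itself does not prove this lemma: it is imported verbatim from \cite[Corollary~1.4]{BGM}, where the argument runs through the decomposition \eqref{eq:decomposition}, comparing the index sets $\{F \leq_\grS D\}$ for $X$ and for $X'$ by means of the defining positivity property of a distinguished subset (your closing "alternative" is in fact the quoted paper's actual route). Your projection-formula route is a legitimate alternative, and its formal part is fine: granting your facts \emph{(i)} and \emph{(ii)}, the chain $\grG(X,\calL_D)=\grG(X',\calL'_D\otimes\psi_*\calO_X)=\grG(X',\calL'_D)$ and the commutative square for the multiplication maps are correct, and \emph{(i)} is indeed part of the construction of the quotient plus Stein factorization. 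Both problems lie in your proof of \emph{(ii)}, which is the real content of the lemma.

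First, to exclude $G$-stable components of $\psi^{-1}(D')$ you argue that the ($G$-stable) image of a $G$-stable prime divisor cannot \emph{dominate} $D'$. But a component of the preimage of a prime divisor need not dominate it: it may map onto a proper closed subset of it (exceptional divisors over small centers do exactly this). What must be ruled out is a $G$-stable prime divisor $Z$ of $X$ with $\psi(Z)\subset D'$, and for this you need an extra input: $\psi(Z)$ is a non-empty closed irreducible $G$-stable subvariety of $X'$, hence contains the unique closed $G$-orbit $Y'$, while $Y'\not\subset D'$ because, by the local structure theorem for wonderful varieties, $X'\senza\bigcup_{E'\in\grD'}E'$ is an open subset meeting every $G$-orbit. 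Second, you assert with no justification that $D$ occurs in $\psi^{-1}(D')$ with multiplicity one. This is not automatic for a proper morphism with connected fibers between smooth varieties (an elliptic fibration with a multiple fiber already violates it), so it genuinely needs an argument; here it holds because the generic point of $D$ lies over the open orbit, and over the open orbits $\psi$ restricts to the homogeneous fibration $G/H\lra G/H'$, which is a smooth morphism, so the pullback of the reduced divisor $D'\cap G/H'$ is reduced. With these two points supplied, your proof closes; without them, the key identity $\psi^*\calL'_{D'}=\calL_D$ is unproved.
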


\begin{lemma}[{\cite[Proposition 1.6]{BGM}}] \label{lem: projnorm parabolic induction}
Let $X$ be a wonderful variety and suppose that $X$ is the parabolic induction of a wonderful variety $X'$. Then for all $\calL,\calL'$ in $\Pic(X)$ the multiplication $m_{\calL, \calL'}$ is surjective if and only if the multiplication $m_{\calL|_{X'}, \calL'|_{X'}}$ is surjective.
\end{lemma}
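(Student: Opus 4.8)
Since only globally generated line bundles intervene in the applications (Theorems~\ref{teo:normalita} and~\ref{teo: projnorm}), I treat that case. The mechanism is to realize the parabolic induction geometrically and to compute sections on $X$ as induced $G$-modules, thereby reducing the surjectivity of $m_{\calL,\calL'}$ on $X$ to the same statement on the fibre $X'$ together with a positivity input on a flag variety. By definition of parabolic induction there is a parabolic $P=LP^{\mathrm u}\subset G$, with $X'$ a wonderful $L$-variety, such that $X\cong G\times_P X'$ and $\pi\colon X\lra G/P$ is a fibre bundle with fibre $X'=\pi^{-1}(eP)$. The colors split as $\grD(X)=\grD_{\mathrm b}\sqcup\grD(X')$, where $\grD_{\mathrm b}$ are the colors pulled back from $G/P$ and $\grD(X')$ those of the fibre; accordingly each globally generated $\calL=\calL_D$ (so $D\in\mN\grD(X)$) factors as $\calL\cong\pi^*\calL_\chi\otimes\calL_{D'}$, where $\chi$ is a character of $P$ encoding the $\grD_{\mathrm b}$-part of $D$ and $\calL_{D'}=\calL|_{X'}$. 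Setting $M=\grG(X',\calL|_{X'})$, viewed as a $P$-module with $P^{\mathrm u}$ acting trivially, the basic computation is $\pi_*\calL\cong\calV(M)\otimes\calL_\chi$, whence
\[
\grG(X,\calL)=\grG\big(G/P,\calV(M)\otimes\calL_\chi\big)=\mathrm{Ind}_P^G\big(M\otimes\mC_\chi\big).
\]

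The forward implication is then immediate. Restriction to the fibre $X'$ is evaluation at $eP$, that is the map $\grG(X,\calL)\lra M$, which is surjective because for globally generated $\calL$ the homogeneous bundle $\calV(M)\otimes\calL_\chi$ is globally generated on $G/P$; since restriction is multiplicative, the naturality square comparing $m_{\calL,\calL'}$ on $X$ with $m_{\calL|_{X'},\calL'|_{X'}}$ on $X'$ has surjective vertical (restriction) maps, so surjectivity upstairs forces surjectivity on $X'$.

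For the converse, let $M'$ and $\chi'$ be associated to $\calL'$ as $M,\chi$ were to $\calL$, write $N=\grG(X',(\calL\otimes\calL')|_{X'})$, and let $m'\colon M\otimes M'\lra N$ be the $P$-equivariant fibre multiplication, surjective by hypothesis. Under the identification above, $m_{\calL,\calL'}$ factors as
\[
\grG\!\big(\calV(M)\otimes\calL_\chi\big)\otimes\grG\!\big(\calV(M')\otimes\calL_{\chi'}\big)\xrightarrow{\,a\,}\grG\!\big(\calV(M\otimes M')\otimes\calL_{\chi+\chi'}\big)\xrightarrow{\,b\,}\grG\!\big(\calV(N)\otimes\calL_{\chi+\chi'}\big),
\]
with $a$ the multiplication of global sections of homogeneous bundles on $G/P$ and $b$ induced by the bundle surjection $\calV(M\otimes M')\twoheadrightarrow\calV(N)$ coming from $m'$. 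I expect this flag-variety step to be the main obstacle. For $b$ one uses the exact sequence $0\to\calV(\ker m')\to\calV(M\otimes M')\to\calV(N)\to 0$ and reduces surjectivity to the vanishing $H^1(G/P,\calV(\ker m')\otimes\calL_{\chi+\chi'})=0$; for $a$ one needs the product of global sections of two globally generated homogeneous bundles to be onto. The key point that unlocks both is that the base characters $\chi,\chi'$ are dominant, since the colors in $\grD_{\mathrm b}$ are fundamental dominant weights: the twist by $\calL_{\chi+\chi'}$ supplies exactly the positivity needed to kill the obstruction $H^1$ by Borel--Weil--Bott and to reduce $a$ to the classical surjectivity of the Cartan multiplication on $G/P$. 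Composing $a$ and $b$ yields surjectivity of $m_{\calL,\calL'}$ on $X$, which closes the equivalence.
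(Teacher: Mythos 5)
You correctly set up the geometry: $X\cong G\times_P X'$, the splitting $\calL\cong\pi^*\calL_\chi\otimes\calL_{D'}$, and the identification $\grG(X,\calL)\cong\mathrm{Ind}_P^G(M\otimes\mC_\chi)$ are all fine (the paper itself gives no internal proof, quoting \cite[Proposition 1.6]{BGM}). But both of your implications have genuine gaps. In the forward direction your justification is circular: for a $G$-homogeneous bundle on $G/P$, global generation is \emph{equivalent}, by translating sections around by $G$, to surjectivity of the evaluation at the base point $eP$ — which is exactly the restriction-surjectivity you are asserting, and you give no independent argument for it. The statement is true, but for a reason you never isolate: since $P^{\mathrm u}$ acts trivially on $X'$ and $L$ is reductive, $M\otimes\mC_\chi$ is a direct sum of \emph{irreducible} $P$-modules, and by \eqref{eq:decomposition} their highest weights are the dominant weights attached to elements $F\in\mN\grD$ with $E-F\in\mN\grS$; because the spherical roots of $X$ coincide with those of $X'$ and are supported on the simple roots of the Levi, which pair nonpositively with the coroots $\alpha^\vee$ for $\alpha$ outside the Levi, every such weight is $G$-dominant. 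Hence $\mathrm{Ind}_P^G$ kills no summand, and evaluation, being a nonzero $P$-map onto a $P$-irreducible module on each summand, is surjective.

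In the converse direction the positivity you invoke is illusory: $\chi,\chi'$ are merely dominant and are \emph{zero} whenever $D,D'$ are supported on the fibre colors — precisely the situation in the paper's applications, where one restricts to the localization $Z$ — so the twist by $\calL_{\chi+\chi'}$ supplies no vanishing by itself. Your step $b$ can in fact be repaired by the same sign observation as above (every $L$-constituent of $V_\mu\otimes V_{\mu'}$, hence of $\ker m'$, has highest weight $\mu+\mu'$ minus a sum of positive roots of $L$, still $G$-dominant, so each irreducible summand of $\calV(\ker m')\otimes\calL_{\chi+\chi'}$ has vanishing higher cohomology by Bott), but the reason you state would not give it. Step $a$ is the real hole: surjectivity of the multiplication of sections of homogeneous \emph{vector} bundles on $G/P$ is not the classical Cartan surjectivity (that is the line-bundle case), you prove nothing about it, and dominance alone does not yield it. The argument of \cite{BGM} bypasses $G/P$-cohomology entirely: using \eqref{eq:decomposition} on $X$ and on $X'$, the sign observation shows the index sets $\{F\leq_\grS E\}$ for corresponding bundles on $X$ and $X'$ match bijectively; restriction to the fibre sends the irreducible constituent $V_F=\mathrm{Ind}_P^G(V_{F'}\otimes\mC_\chi)$ exactly onto $V_{F'}$; and multiplicity-freeness of $\grG(X,\calL\otimes\calL')$ (a $G$-module of sections on a spherical variety) upgrades ``the restriction of $V_{F_1}\cdot V_{F_2}$ contains the $L$-constituent $V_{F'}$'' to the actual containment $V_F\subset V_{F_1}\cdot V_{F_2}$, giving both implications at once. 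Finally, note that the lemma is stated for \emph{all} $\calL,\calL'\in\Pic(X)$, whose section spaces can be nonzero without global generation; your restriction to globally generated bundles, while sufficient for Theorem~\ref{teo: projnorm}, proves less than the statement.
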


We now apply previous reductions to our cases.
In particular, we will show that to prove Theorem~\ref{teo: projnorm} 
it is enough to prove the surjectivity of the multiplication just for the
following basic case, labelled $\mathsf {a^x}(1,1,1)$, since the other basic cases have already been treated in \cite{CM_projective-normality} and \cite{BGM}:

\[\begin{picture}(3600,2000)(-300,-1500)
\multiput(0,0)(1800,0){3}{\usebox{\aone}}
\multiput(0,-900)(3600,0){2}{\line(0,-1){300}}
\put(0,-1200){\line(1,0){3600}}
\multiput(0,900)(1800,0){2}{\line(0,1){300}}
\put(0,1200){\line(1,0){1800}}
\put(2700,600){\line(1,0){650}}
\put(2050,-600){\line(1,0){650}}
\put(2700,-600){\line(0,1){1200}}
\end{picture}
\]

Let $\calO \subset \gop$ be a spherical nilpotent $K$-orbit as in Theorem \ref{teo: projnorm} and let $X$ be the corresponding wonderful variety. When $X$ is a flag variety, or equivalently $\Sigma=\vuoto$, the surjectivity of the multiplication is trivial.

By Lemma~\ref{lem: projnorm parabolic induction}, 
the surjectivity of the multiplication on
$X$ is reduced to the surjectivity of the multiplication on $Z$, the localization of $X$ at
the subset $\supp \grS\subset S$. These localizations are described in Section~\ref{s:1}.

In the cases 
\begin{itemize}
\item[] 1.6 ($r=s=0$ and $q>1$), 1.7 ($r=s=0$ and $p>1$), 
\item[] 2.3, 2.4, 
\item[] 4.3, 4.4, 
\item[] 5.4,
\end{itemize}
treated in Section~\ref{ss:symmetric cases}, the wonderful variety
$Z$ is a rank one wonderful variety which is homogeneous under its automorphism group (see \cite{Akh} for a description of these varieties).
Therefore in these cases $Z$ is a flag variety, and the surjectivity of the multiplication is trivial.

In the remaining cases of Section~\ref{ss:symmetric cases} the wonderful
variety $Z$ is the wonderful compactification of an adjoint symmetric
variety, and the surjectivity of the multiplication holds thanks to \cite{CM_projective-normality}.

In the cases 1.4 and 1.5, treated in Section~\ref{ss:reductive}, we get the wonderful variety $Z$ with spherical system $\mathsf{a^x}(1,1,1)$.

In the cases of Section~\ref{ss:typeL}, $Z$ is the wonderful
variety with spherical system $\mathsf{a^y}(r,r)+\mathsf a(t)+\mathsf{a^y}(s,s)$. 
The surjectivity of the multiplication in this case can be
reduced to the surjectivity of the multiplication for a comodel wonderful variety, which is known by
\cite[Theorem~5.2]{BGM}.

Let indeed $Z'$ be the comodel wonderful
variety of cotype $\sfA_{2(r+t+s)}$, which is the wonderful variety with
the following spherical system for a group $G$ of semisimple type $\sfA_{r+t+s+1} \times \sfA_{r+t+s}$, where the set $\Sigma$ of spherical roots is equal to the set of simpe roots of $G$. 
\[\begin{picture}(30300,6000)(-300,-3000)
\multiput(0,0)(10800,0){2}{\multiput(0,0)(1800,0){2}{\usebox{\edge}}}
\put(3600,0){\thicklines\multiput(0,0)(6100,0){2}{\line(1,0){1100}}\multiput(1300,0)(400,0){12}{\line(1,0){200}}}
\multiput(17100,0)(9000,0){2}{\multiput(0,0)(1800,0){2}{\usebox{\edge}}}
\put(20700,0){\thicklines\multiput(0,0)(4300,0){2}{\line(1,0){1100}}\multiput(1300,0)(400,0){8}{\line(1,0){200}}}
\multiput(0,0)(10800,0){2}{\multiput(0,0)(1800,0){3}{\usebox{\aone}}}
\multiput(17100,0)(9000,0){2}{\multiput(0,0)(1800,0){3}{\usebox{\aone}}}
\multiput(0,3000)(29700,0){2}{\line(0,-1){2100}}
\put(0,3000){\line(1,0){29700}}
\multiput(1800,2700)(26100,0){2}{\line(0,-1){1800}}
\put(1800,2700){\line(1,0){26100}}
\multiput(3600,2400)(22500,0){2}{\line(0,-1){1500}}
\put(3600,2400){\line(1,0){22500}}
\multiput(10800,-1800)(9900,0){2}{\line(0,1){900}}
\put(10800,-1800){\line(1,0){9900}}
\multiput(12600,-1500)(6300,0){2}{\line(0,1){600}}
\put(12600,-1500){\line(1,0){6300}}
\multiput(14400,-1200)(2700,0){2}{\line(0,1){300}}
\put(14400,-1200){\line(1,0){2700}}
\multiput(1800,-3000)(27900,0){2}{\line(0,1){2100}}
\put(1800,-3000){\line(1,0){27900}}
\multiput(3600,-2700)(24300,0){2}{\line(0,1){1800}}
\put(3600,-2700){\line(1,0){24300}}
\multiput(5400,-2400)(0,300){4}{\line(0,1){150}}
\put(5400,-2400){\line(1,0){20700}}
\put(26100,-2400){\line(0,1){1500}}
\multiput(9000,1800)(0,-300){3}{\line(0,-1){150}}
\put(9000,1800){\line(1,0){11700}}
\put(20700,1800){\line(0,-1){900}}
\multiput(10800,1500)(8100,0){2}{\line(0,-1){600}}
\put(10800,1500){\line(1,0){8100}}
\multiput(12600,1200)(4500,0){2}{\line(0,-1){300}}
\put(12600,1200){\line(1,0){4500}}
\multiput(0,600)(1800,0){3}{\usebox{\toe}}
\multiput(10800,600)(1800,0){2}{\usebox{\toe}}
\multiput(17100,600)(1800,0){3}{\usebox{\toe}}
\multiput(26100,600)(1800,0){2}{\usebox{\toe}}
\end{picture}\]
Consider the wonderful subvariety of $Z'$ associated to
$\Sigma \smallsetminus \{\alpha_{r+1}, \ldots, \alpha_{r+t+1}\}$, 
\[\begin{picture}(30300,5400)(-300,-2700)
\multiput(0,0)(9000,0){2}{\multiput(0,0)(3600,0){2}{\usebox{\edge}}}
\multiput(1800,0)(9000,0){2}{\usebox{\shortsusp}}
\put(5400,0){\usebox{\susp}}
\multiput(17100,0)(5400,0){3}{\usebox{\shortsusp}}
\multiput(18900,0)(5400,0){2}{\usebox{\dynkinathree}}
\multiput(0,0)(10800,0){2}{\multiput(0,0)(1800,0){3}{\usebox{\aone}}}
\multiput(5400,0)(3600,0){2}{\usebox{\wcircle}}
\multiput(17100,0)(7200,0){2}{\multiput(0,0)(1800,0){4}{\usebox{\aone}}}
%\put(24000,-300){\multiput(300,300)(1800,0){2}{\circle{600}}\multiput(300,300)(25,25){13}{\circle*{70}}\multiput(600,600)(300,0){4}{\multiput(0,0)(25,-25){7}{\circle*{70}}}\multiput(750,450)(300,0){4}{\multiput(0,0)(25,25){7}{\circle*{70}}}\multiput(2100,300)(-25,25){13}{\circle*{70}}}
\multiput(0,2700)(29700,0){2}{\line(0,-1){1800}}
\put(0,2700){\line(1,0){29700}}
\multiput(1800,2400)(26100,0){2}{\line(0,-1){1500}}
\multiput(1800,2400)(14400,0){2}{\line(1,0){11700}}
\multiput(13500,2400)(1650,0){2}{\multiput(150,0)(300,0){3}{\line(1,0){150}}}
\multiput(3600,2100)(22500,0){2}{\line(0,-1){1200}}
\put(3600,2100){\line(1,0){22500}}
\multiput(10800,-1800)(9900,0){2}{\line(0,1){900}}
\put(10800,-1800){\line(1,0){9900}}
\multiput(12600,-1500)(6300,0){2}{\line(0,1){600}}
\multiput(12600,-1500)(4500,0){2}{\line(1,0){1800}}
\multiput(14400,-1500)(1650,0){2}{\multiput(150,0)(300,0){3}{\line(1,0){150}}}
\multiput(14400,-1200)(2700,0){2}{\line(0,1){300}}
\put(14400,-1200){\line(1,0){2700}}
\multiput(1800,-2700)(27900,0){2}{\line(0,1){1800}}
\put(1800,-2700){\line(1,0){27900}}
\multiput(3600,-2400)(24300,0){2}{\line(0,1){1500}}
\put(3600,-2400){\line(1,0){24300}}
\multiput(10800,1500)(8100,0){2}{\line(0,-1){600}}
\put(10800,1500){\line(1,0){8100}}
\multiput(12600,1200)(4500,0){2}{\line(0,-1){300}}
\put(12600,1200){\line(1,0){4500}}
\multiput(0,600)(1800,0){2}{\usebox{\toe}}
\multiput(10800,600)(1800,0){2}{\usebox{\toe}}
\multiput(17100,600)(1800,0){7}{\usebox{\toe}}
\end{picture}\]
then the set of
colors $\{D_{\alpha'_{s+1}}^+, D_{\alpha'_{s+2}}^\pm, \ldots, D_{\alpha'_{s+t-1}}^\pm, D_{\alpha'_{s+t}}^-\}$ is distinguished, and the corresponding
quotient %$Y$ 
is a parabolic induction of $Z$. 
\[\begin{picture}(30300,5400)(-300,-2700)
\multiput(0,0)(9000,0){2}{\multiput(0,0)(3600,0){2}{\usebox{\edge}}}
\multiput(1800,0)(9000,0){2}{\usebox{\shortsusp}}
\put(5400,0){\usebox{\susp}}
\multiput(17100,0)(5400,0){3}{\usebox{\shortsusp}}
\multiput(18900,0)(5400,0){2}{\usebox{\dynkinathree}}
\multiput(0,0)(10800,0){2}{\multiput(0,0)(1800,0){3}{\usebox{\aone}}}
\multiput(5400,0)(3600,0){2}{\usebox{\wcircle}}
\multiput(17100,0)(9000,0){2}{\multiput(0,0)(1800,0){3}{\usebox{\aone}}}
\put(22200,-300){\multiput(300,300)(1800,0){2}{\circle{600}}\multiput(300,300)(25,25){13}{\circle*{70}}\multiput(600,600)(300,0){4}{\multiput(0,0)(25,-25){7}{\circle*{70}}}\multiput(750,450)(300,0){4}{\multiput(0,0)(25,25){7}{\circle*{70}}}\multiput(2100,300)(-25,25){13}{\circle*{70}}}
\multiput(0,2700)(29700,0){2}{\line(0,-1){1800}}
\put(0,2700){\line(1,0){29700}}
\multiput(1800,2400)(26100,0){2}{\line(0,-1){1500}}
\multiput(1800,2400)(14400,0){2}{\line(1,0){11700}}
\multiput(13500,2400)(1650,0){2}{\multiput(150,0)(300,0){3}{\line(1,0){150}}}
\multiput(3600,2100)(22500,0){2}{\line(0,-1){1200}}
\put(3600,2100){\line(1,0){22500}}
\multiput(10800,-1800)(9900,0){2}{\line(0,1){900}}
\put(10800,-1800){\line(1,0){9900}}
\multiput(12600,-1500)(6300,0){2}{\line(0,1){600}}
\multiput(12600,-1500)(4500,0){2}{\line(1,0){1800}}
\multiput(14400,-1500)(1650,0){2}{\multiput(150,0)(300,0){3}{\line(1,0){150}}}
\multiput(14400,-1200)(2700,0){2}{\line(0,1){300}}
\put(14400,-1200){\line(1,0){2700}}
\multiput(1800,-2700)(27900,0){2}{\line(0,1){1800}}
\put(1800,-2700){\line(1,0){27900}}
\multiput(3600,-2400)(24300,0){2}{\line(0,1){1500}}
\put(3600,-2400){\line(1,0){24300}}
\multiput(10800,1500)(8100,0){2}{\line(0,-1){600}}
\put(10800,1500){\line(1,0){8100}}
\multiput(12600,1200)(4500,0){2}{\line(0,-1){300}}
\put(12600,1200){\line(1,0){4500}}
\multiput(0,600)(1800,0){2}{\usebox{\toe}}
\multiput(10800,600)(1800,0){2}{\usebox{\toe}}
\multiput(17100,600)(1800,0){3}{\usebox{\toe}}
\multiput(26100,600)(1800,0){2}{\usebox{\toe}}
\end{picture}\]
%More precisely, if $L \subset G$ is the Levi subgroup of semisimple type $\sfA_{r} \times \sfA_{r+t+s} \times \sfA_{s}$ with simple roots $\Sigma \smallsetminus \{\alpha_{r+1}, \ldots, \alpha_{r+t+1}\}$ and if $Q = LQ^{\mru}$ is the corresponding standard parabolic subgroup of $G$, then the spherical system $\mathsf{a^y}(r,r)+\mathsf a(t)+\mathsf{a^y}(s,s)$ defines a $L$-variety $Z$ (where the center of $L$ acts trivially), and $Y = G \times_Q Z$ is a parabolic induction of $Z$.
Therefore the surjectivity of the multiplication of $Z$ follows from that of $Z'$ thanks to 
Lemmas~\ref{lem: projnorm localizzazioni},~\ref{lem: projnorm quotients} and~\ref{lem: projnorm parabolic induction}.

%%%%%%%%%%%%%%%%%%%%%%%%%%%%%%%%%%%%%%%%%%%%%%%%%%
\subsection{Projective normality of $\mathsf{a^x}(1,1,1)$} \label{ss:ax111} 
%%%%%%%%%%%%%%%%%%%%%%%%%%%%%%%%%%%%%%%%%%%%%%%%%%

Consider $X$ for the semisimple group $G = \mathrm{SL}(2) \times \mathrm{SL}(2) \times \mathrm{SL}(2)$ with spherical system $\mathsf{a^x}(1,1,1)$.

The spherical roots of $X$ coincide with the simple roots, and we enumerate them as follows: $\grs_1 = \gra$, $\grs_2 = \gra'$, $\grs_3 = \gra''$. Moreover we enumerate the colors of $X$ in the following way: $D_1 = D_{\gra''}^+$, $D_2 = D_{\gra}^-$, $D_3 = D_{\gra}^+$. Then the spherical roots are expressed in terms of colors as follows: 
\[\begin{array}{rr}
\grs_1 = & -D_1 + D_2 +D_3, \\ 
\grs_2 = & D_1 - D_2 +D_3, \\
\grs_3 = & D_1 + D_2 -D_3.
\end{array}\]

Notice that the restriction $\gro : \Pic(X) \ra \grL$ is injective in this case (see e.g.\ \cite[Lemma~30.24]{Ti}). Indeed, by identifying $\grL^+$ with $\mN^3$ we have
%$$
%\gro(\calL_{D_1}) = \gro' + \gro'', \qquad
%\gro(\calL_{D_2}) = \gro + \gro'', \qquad
%\gro(\calL_{D_3}) = \gro + \gro'.
%$$
$$\begin{array}{ccc}
\gro(\calL_{D_1}) = (0,1,1), & 
\gro(\calL_{D_2}) = (1,0,1), &
\gro(\calL_{D_3}) = (1,1,0),\\
\gro(\calL_{\grs_1}) = (2,0,0), &
\gro(\calL_{\grs_2}) = (0,2,0), &
\gro(\calL_{\grs_3}) = (0,0,2).\\
\end{array}
$$
Thus every $\calL \in \Pic(X)$ is uniquely determined by the corresponding triple $\gro(\calL) \in \mZ^3$. 

The surjectivity of the multiplication of sections of globally generated line bundles can be proved fairly easily in this case by making use of the machinery of low triples developed in \cite{BGM}, namely by classifying the covering differences of the partial ordered set $(\mN \grD, \leq_\grS)$ and then studying the fundamental low triples. However, in order to increase the accessibility to the non-experts, we will write down a proof by more elementary language, only involving representation theoretic considerations. In any case, even though we do not mention low triples and covering differences, they are intrinsically hidden inside the arguments that appear in the present proof.

Let $H \subset G$ be the generic stabilizer of $X$. Identifying sections of line bundles on $X$ with functions on $G$ which are semiinvariant with respect to the right action of $H$, for every globally generated line bundle $\calL \in \Pic(X)$ we can regard the space of sections $\grG(X,\calL)$ as a $G$-stable submodule of $\mC[G]^{(H)}$.

If $m \in \mN$, denote by $V(m)$ the simple $\mathrm{SL(2)}$-module of highest weight $m$, and if $\ul m = (m,m',m'') \in \mN^3$ we denote $V(\ul m) = V(m) \otimes V(m') \otimes V(m'')$, regarded as a $G$-module.
%Since they are self-dual, we will identify $V(m)$ and $V(m)$ with their dual modules.

Let
$$
	\mathrm T = \{(m, m', m'') \in \mN^3 \; : \; V(m'') \subset V(m) \otimes V(m')\}
$$
be the tensor semigroup of $\mathrm{SL}(2)$. Then by the Clebsch-Gordan rule we have
$$
	\mathrm T = \{(m, m', m'') \in \mN^3 \; : \; m+m'+m'' \in 2\mN \quad \text{and} \quad |m-m'| \leq m'' \leq m+m'\}.
$$
Notice that $\mathrm T$ is stable under permutations: indeed
$$
V(m'') \subset V(m) \otimes V(m') \iff \big(V(m)^* \otimes V(m')^* \otimes V(m'')\big)^{\diag \mathrm{SL(2)}} \neq 0,
$$
the latter conditon is independent on the order of the triple $(m,m',m'')$ since $V(m) \simeq V(m)^*$ and $V(m') \simeq V(m')^*$ are self-dual modules.

In our case, we can apply the previous discussion as follows. Indeed, by \cite[Case 43]{BP15} we can assume that
$$H = \big(Z_{\mathrm{SL(2)}} \times Z_{\mathrm{SL(2)}} \times Z_{\mathrm{SL(2)}}\big) \; \diag \mathrm{SL(2)}.$$
Since $V(\ul m) \simeq V(\ul m)^*$ is self dual, we have then
\begin{align*}
& \mC[G]^{(H)} = \mC[G]^{\diag \mathrm{SL(2)}} \simeq  \bigoplus_{\ul m \in \mN^3} V(\ul m) \otimes V(\ul m)^{\diag \mathrm{SL(2)}} = \bigoplus_{\ul m \in \mathrm T} V(\ul m).
\end{align*}

By the $G$-equivariant isomorphism $\mC[G]^{(H)} \simeq \bigoplus_{D \in \mN\grD} V_D$, as a consequence of the previous description we see that the map $\gro$ identifies the 
semigroup of globally generated line bundles on $X$ with the tensor semigroup $\mathrm T$.

The $G$-equivariant embedding $V(\ul m) \lra \mC[G]^{\diag \mathrm{SL(2)}}$ can be more explicitly expressed in terms of matrix coefficients as follows. Fix for all $k \in \mN$ an equivariant isomorphism $v \longmapsto \psi_v$ between $V(k)$ and $V(k)^*$.

If $\ul m = (m,m',m'') \in \mathrm T$, by the Clebsch-Gordan rule $V(m'')$ always occurs with multiplicity one in $V(m) \otimes V(m')$, thus we have an equivariant projection
$$
	\pi_{m,m'}^{m'\!'} \colon V(m) \otimes V(m') \lra V(m'')
$$
which is uniquely determined up to a scalar factor.
%For convenience, we also set $\pi_{m,m'}^{m'\!'} = 0$ and $\iota^{m,m'}_{m'\!'} = 0$ whenever $(m,m',m'') \not \in \mathrm T$.

For any simple tensor $v \otimes v' \otimes v'' \in V(m) \otimes V(m') \otimes V(m'')$ we can define a matrix coefficient $f_{v \otimes v' \otimes v'\!' } \in \mC[G]$ by setting, for all $(g,g',g'') \in G$, 
$$
f_{v \otimes v' \otimes v'\!' } (g,g',g'') =  \langle \pi_{m,m'}^{m'\!'}(g v \otimes g' v'), g'' \psi_{v'\!'} \rangle,
$$
Notice that $f_{v \otimes v' \otimes v'\!'} \in \mC[G]^{\diag \mathrm{SL(2)}}$. Thus extending linearly we get an embedding of $V(\ul m)$ in $\mC[G]^{\diag \mathrm{SL(2)}}$.

Define a partial order on $\mathrm T$, by setting $(m,m',m'') \geq (n,n',n'')$ if the differences $m - n$, $m' - n'$, $m'' - n''$ are all nonnegative even numbers. This is the partial order on $\mathrm T$ induced by the partial order $\leq_\grS$ on $\mN \grD$ via the isomorphism $\mathrm T \simeq \mN \grD$.

For all $\ul m \in \mathrm T$, define a submodule of $\mC[G]^{\diag \mathrm{SL(2)}}$ by setting
$$
	\grG(\ul m) = \bigoplus_{\ul n \leq \ul m} \; V(\ul n).
$$

If $\ul m \in \mathrm T$ and $\calL_{\ul m} \in \Pic(X)$ is the corresponding globally generated line bundle on $X$, then by \eqref{eq:decomposition} we have a $G$-equivariant isomorphism
$$
	\grG(X,\calL_{\ul m}) \simeq \grG(\ul m).
$$
If moreover $\ul n \in \mathrm T$, then we have a commutative diagram 
\[
    \xymatrix
    {
        \grG(X,\calL_{\ul m}) \otimes \grG(X,\calL_{\ul n}) \ar[r] \ar_{\simeq}[d] & \grG(X,\calL_{\ul m+\ul n}) \ar^{\simeq}[d] \\ 
    	\grG(\ul m) \otimes \grG(\ul n) \ar[r] & \grG(\ul m+\ul n)
    }
\]
where the upper arrow is the multiplication of sections and the lower arrow is the multiplication in the invariant ring $\mC[G]^{\diag \mathrm{SL(2)}}$.

\begin{lemma}	\label{lemma:prodotti}
Let $\ul k = (k,k',k'')$, $\ul m = (m,m', m'')$, $\ul n = (n,n',n'') \in \mathrm T$. Suppose that $(m,n,k)$, $(m',n',k')$, $(m'',n'',k'')  \in \mathrm T$. If $V(k'')$ occurs with multiplicity one inside $V(m) \otimes V(m') \otimes V(n) \otimes V(n')$, then $V(\ul k) \subset V(\ul m) \cdot V(\ul n)$.
\end{lemma}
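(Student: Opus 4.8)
The plan is to show that the unique copy of $V(\ul k)$ inside $V(\ul m)\otimes V(\ul n)$ is not annihilated by multiplication in the invariant ring. First I would record that the three hypotheses $(m,n,k),(m',n',k'),(m'',n'',k'')\in\mathrm T$ mean, by applying the Clebsch--Gordan rule one variable at a time, that $V(k)\subset V(m)\otimes V(n)$, $V(k')\subset V(m')\otimes V(n')$ and $V(k'')\subset V(m'')\otimes V(n'')$, each with multiplicity one. Hence $V(\ul k)=V(k)\otimes V(k')\otimes V(k'')$ occurs in $V(\ul m)\otimes V(\ul n)$ with multiplicity exactly one, so $\dim\Hom_G\big(V(\ul m)\otimes V(\ul n),V(\ul k)\big)=1$. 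Since $\mC[G]^{\diag\mathrm{SL}(2)}=\bigoplus_{\ul\ell\in\mathrm T}V(\ul\ell)$ is multiplicity free, a submodule contains the isotypic component $V(\ul k)$ if and only if its projection there is nonzero; thus $V(\ul k)\subset V(\ul m)\cdot V(\ul n)$ is equivalent to the non-vanishing of the composite
\[ \mu_{\ul k}\colon V(\ul m)\otimes V(\ul n)\xrightarrow{\ \mu\ }\mC[G]^{\diag\mathrm{SL}(2)}\xrightarrow{\ \mathrm{pr}_{\ul k}\ }V(\ul k), \]
where $\mu$ is the multiplication and $\mathrm{pr}_{\ul k}$ is the projection onto the $V(\ul k)$-isotypic component. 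So it suffices to prove $\mu_{\ul k}\neq 0$.

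Next I would make $\mu_{\ul k}$ explicit through the matrix-coefficient description. For $f=f_{v_1\otimes v_2\otimes v_3}\in V(\ul m)$ and $f'=f_{u_1\otimes u_2\otimes u_3}\in V(\ul n)$, the product $f\cdot f'$ depends on the first variable only through $v_1\in V(m),u_1\in V(n)$, on the second variable only through $v_2\in V(m'),u_2\in V(n')$, and on the third only through $v_3\in V(m''),u_3\in V(n'')$. Extracting the $V(\ul k)$-component therefore amounts to projecting the three variables by the Clebsch--Gordan maps $\pi^{k}_{m,n}$, $\pi^{k'}_{m',n'}$ and $\pi^{k''}_{m'',n''}$, each surjective because the corresponding triple lies in $\mathrm T$. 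The essential observation is that, because $f$ and $f'$ are built from the invariant trilinear forms that couple $(m,m')\to m''$ and $(n,n')\to n''$, the way the first two variables feed into the third factor $V(k'')$ is governed by an equivariant \emph{coupling map} $V(m)\otimes V(m')\otimes V(n)\otimes V(n')\to V(k'')$, namely the composite $\pi^{k''}_{m'',n''}\circ\big(\pi^{m''}_{m,m'}\otimes\pi^{n''}_{n,n'}\big)$. I would reduce the non-vanishing of $\mu_{\ul k}$ to the non-vanishing of this single coupling map.

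Here is where the hypothesis enters. The assumption that $V(k'')$ occurs with multiplicity one in $V(m)\otimes V(m')\otimes V(n)\otimes V(n')$ says precisely that $\dim\Hom_G\big(V(m)\otimes V(m')\otimes V(n)\otimes V(n'),V(k'')\big)=1$. Consequently the coupling governing $\mu_{\ul k}$ lies in a one-dimensional space, so the $V(\ul k)$-component is a scalar multiple of the \emph{single} composite $\pi^{k''}_{m'',n''}\circ(\pi^{m''}_{m,m'}\otimes\pi^{n''}_{n,n'})$, with no competing recoupling that could cause cancellation. Since this composite is a chain of surjective Clebsch--Gordan projections it is itself surjective, hence nonzero; combined with the one-dimensionality this forces the scalar to be nonzero. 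Therefore $\mu_{\ul k}\neq 0$, and via \eqref{eq:decomposition} and the identification $\grG(X,\calL_{\ul m})\simeq\grG(\ul m)$ this gives $V(\ul k)\subset V(\ul m)\cdot V(\ul n)$, as claimed.

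The main obstacle is exactly this last non-vanishing: individually surjective Clebsch--Gordan projections can compose to a map whose contribution to a fixed isotypic component is cancelled by other equivariant couplings, and ruling this out is what the multiplicity-one hypothesis on the fourfold tensor product achieves, by collapsing an a priori sum of $6j$-type recouplings to a single term. If one prefers to bypass the recoupling bookkeeping, the same non-vanishing can be checked concretely in the binary-form model $V(j)=\mC[x,y]_j$: one chooses the weight vectors $v_i,u_i$ adapted to the weight $(k,k',k'')$ and evaluates the relevant transvectants on suitable powers of linear forms, where the resulting value is manifestly nonzero. Either route establishes $\mu_{\ul k}\neq 0$ and completes the proof.
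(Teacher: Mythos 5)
Your reduction is the right one, and it matches the paper's frame: since $\mC[G]^{\diag \mathrm{SL}(2)}$ is multiplicity free, $V(\ul k)\subset V(\ul m)\cdot V(\ul n)$ is equivalent to the non-vanishing of the projection $\mu_{\ul k}$ of the multiplication onto the $V(\ul k)$-isotypic component, and the product of matrix coefficients is indeed governed by the coupling $A=\pi_{m'',n''}^{k''}\circ(\pi_{m,m'}^{m''}\otimes\pi_{n,n'}^{n''})$. The gap is in your last step, and it sits exactly where the paper does its real work. First, the claim that the problem reduces to ``the non-vanishing of this single coupling map'' is not the correct reduction: $A$ is automatically nonzero (it is a composite of surjective Clebsch--Gordan projections), while what is actually needed --- as the paper records right after its proof --- is the non-vanishing of $A$ restricted to the embedded copy of $V(k)\otimes V(k')$, that is, of $A\circ(\iota_k^{m,n}\otimes\iota_{k'}^{m',n'})$; a nonzero equivariant map can perfectly well vanish on a proper submodule. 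Second, the deduction ``$\mu_{\ul k}$ is a scalar multiple of a nonzero generator of a one-dimensional Hom space, hence the scalar is nonzero'' is a non-sequitur: zero is also a scalar multiple. A telling symptom is that your non-vanishing argument never uses the hypotheses $(m,n,k),(m',n',k')\in\mathrm T$, which are indispensable for the conclusion.

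The missing idea is the comparison of \emph{two} couplings, which is the heart of the paper's proof. Besides $A$, one must consider $B=\pi_{k,k'}^{k''}\circ(\pi_{m,n}^{k}\otimes\pi_{m',n'}^{k'})$; this is where $(m,n,k)$, $(m',n',k')$ and $(k,k',k'')\in\mathrm T$ enter, and $B$ is nonzero for the same surjectivity reason as $A$. By the multiplicity-one hypothesis both lie in the same one-dimensional Hom space, so $A=\lambda B$ with $\lambda\neq 0$, both maps being nonzero. Since $B\circ(\iota_k^{m,n}\otimes\iota_{k'}^{m',n'})$ is a nonzero multiple of $\pi_{k,k'}^{k''}$ essentially by construction, the same holds for $A$, which is exactly the required non-vanishing; the paper then converts this identity into an explicit sum of products of matrix coefficients representing a nonzero vector of $V(\ul k)$. (Equivalently: multiplicity one together with $(m,n,k),(m',n',k')\in\mathrm T$ forces the whole $V(k'')$-isotypic component of the fourfold tensor product to lie inside $\iota_k^{m,n}(V(k))\otimes\iota_{k'}^{m',n'}(V(k'))$, and a nonzero equivariant map onto $V(k'')$ cannot kill its own isotypic component.) Your fallback via transvectants does not repair the gap: ``choosing suitable vectors where the value is manifestly nonzero'' is precisely the claim to be proved, and such composite Clebsch--Gordan expressions do vanish for admissible parameters in general --- this is the phenomenon of non-trivial zeros of $6j$-symbols --- so the multiplicity-one hypothesis must be used in the structural way above rather than invoked as a slogan.
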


\begin{proof}
Consider the diagram
\[
    \xymatrix
    {
        V(m) \otimes V(n) \otimes V(m') \otimes V(n')  \ar^{\qquad \quad \; \pi_{m\!,n}^{k} \otimes \pi_{m'\!\!,n'}^{k'}}[rr] \ar_{\pi_{m\!,m'}^{m'\!'} \otimes \pi_{n\!,n'}^{n'\!'}}[d] & & V(k) \otimes V(k') \ar^{\pi_{k\!,k'}^{k'\!'}}[d] \\ 
    	V(m'') \otimes V(n'') \ar^{\pi_{m'\!'\!,n'\!'}^{k'\!'}}[rr] & & V(k'')
    }
\]
Then $\pi_{k,k'}^{k'\!'} \circ (\pi_{m,n}^{k} \otimes \pi_{m',n'}^{k'})$ and $\pi_{m'\!',n'\!'}^{k'\!'} \circ (\pi_{m,m'}^{m'\!'} \otimes \pi_{n,n'}^{n'\!'})$ are both non-zero and equivariant for the action of $\mathrm{SL}(2)$. Since $V(k'')$ occurs with multiplicity one inside $V(m) \otimes V(m') \otimes V(n) \otimes V(n')$ these two projections must be proportional, hence up to a renormalization we may assume that
$$\pi_{k,k'}^{k'\!'} \circ (\pi_{m,n}^{k} \otimes \pi_{m',n'}^{k'}) = \pi_{m'\!',n'\!'}^{k'\!'} \circ (\pi_{m,m'}^{m'\!'} \otimes \pi_{n,n'}^{n'\!'}).$$ 

Let now $v \otimes v' \otimes v'' \in V(\ul m)$ and $w \otimes w' \otimes w'' \in V(\ul n)$ be simple tensors and consider the product of the corresponding matrix coefficients.

Then we have
\begin{align*}
	\big(f_{v \otimes v' \otimes v'\!'}  f_{w \otimes w' \otimes w'\!' }\big) &  (g,g',g'') = \\
	&  = \langle (\pi_{m,m'}^{m'\!'} \otimes \pi_{n,n'}^{n'\!'}) \big( g.(v\otimes w) \otimes g'\!.(v' \otimes w') \big), \, g''\!.(\psi_{v'\!'} \otimes \psi_{w'\!'}) \rangle.
\end{align*}

Since $V(k'') \subset V(m'') \otimes V(n'')$, we can find a nonzero element $u'' = \sum_h v''_h \otimes w''_h \in V(k'')$, for suitable elements $v''_h \in V(m'')$ and $w''_h \in V(n'')$. Then, up to a nonzero scalar factor, we have
\begin{align*}
	\sum_h \big(f_{v \otimes v' \otimes v'\!'_h} & f_{w \otimes w' \otimes w'\!'_h}\big)  (g,g',g'') = \\
	&  = \langle (\pi_{m,m'}^{m'\!'} \otimes \pi_{n,n'}^{n'\!'}) \big( g(v\otimes w) \otimes g'\!.(v' \otimes w') \big), \, g''\!.(\sum_h \psi_{v'\!'_h} \otimes \psi_{w'\!'_h}) \rangle = \\
	&  = \langle \pi_{m'\!'\!,n'\!'}^{k'\!'} \circ (\pi_{m,m'}^{m'\!'} \otimes \pi_{n,n'}^{n'\!'}) \big( g.(v\otimes w) \otimes g'\!.(v' \otimes w') \big), \, g''\!.\psi_{u'\!'} \rangle = \\
	&  = \langle \pi_{k,k'}^{k'\!'} \circ (\pi_{m,n}^{k} \otimes \pi_{m',n'}^{k'}) \big( g.(v\otimes w) \otimes g'\!.(v' \otimes w') \big), \, g''\!.\psi_{u'\!'} \rangle = \\
	&  = \langle \pi_{k,k'}^{k'\!'} \big(g \pi_{m,n}^{k} (v\otimes w) \otimes g' \pi_{m',n'}^{k'} (v' \otimes w') \big), \, g''\psi_{u'\!'} \rangle,
\end{align*}
which is the matrix coefficient of
$$\pi_{m,n}^{k} (v\otimes w) \otimes \pi_{m',n'}^{k'}(v' \otimes w') \otimes u'' \in V(k) \otimes V(k') \otimes V(k'').$$
Since $V(k) \subset V(m) \otimes V(n)$ and $V(k') \subset V(m') \otimes V(n')$, we can find nonzero elements $u = \sum_i v_i \otimes w_i \in V(m) \otimes V(n)$ and $u' = \sum_j v'_j \otimes w'_j \in V(m') \otimes V(n')$ with $u \in V(k)$ and $u' \in V(k')$, thus
$$
	\sum_{i,j,h} \big(f_{v_i \otimes v'_j \otimes v'\!'_h} f_{w_i \otimes w'_j \otimes w'\!'_h}\big)  (g,g',g'') = \langle \pi_{k,k'}^{k'\!'} (g u \otimes g' u' ), \, g''\psi_{u'\!'} \rangle
$$
is the matrix coefficient of a nonzero element in $V(\ul k)$, and the claim follows.
\end{proof}

\begin{remark}
Notice that the multiplication in $\mC[G]^{\diag \mathrm{SL(2)}}$ is degenerate in the sense of \cite[Proposition 9.1]{BGM}, that is, the inclusion 
\[V(\ul m)\cdot V(\ul n)\subset\bigoplus_{\ul k\in\mathrm T\ :\ V(\ul k)\subset V(\ul m)\otimes V(\ul n)}V(\ul k)\]
is not necessarily an equality for all $\ul m$ and $\ul n$ in $\mathrm T$. For example, for $\ul k = (2,2,2)$ and $\ul m = (1,1,2)$, $V(\ul k)$ occurs in the tensor product $V(\ul m)^{\otimes 2}$. However $V(\ul k)$ does not occur in the symmetric product $\mathsf S^2 V(\ul m)$, thus $V(\ul k) \not \subset V(\ul m)^2$ inside $\mC[G]^{\diag \mathrm{SL(2)}}$.
\end{remark}

In terms of matrix coefficients, for $\ul k = (k,k',k'')$, $\ul m = (m,m', m'')$, $\ul n = (n,n',n'')$ in $\mathrm T$ with $V(\ul k)\subset V(\ul m)\otimes V(\ul n)$, it can be easily shown that
$$
V(\ul k) \subset V(\ul m) \cdot V(\ul n)
\quad \Longleftrightarrow \;
\pi_{m'\!'\!,n'\!'}^{k'\!'} \circ \big(\pi_{m\!,m'}^{m'\!'} \otimes \pi_{n\!,n'}^{n'\!'}\big) \circ \big(\iota^{m\!,n}_{k} \otimes \iota^{m'\!,n'}_{k'}\big)  \neq 0
$$
where $\iota^{m,n}_{k}$ and $\iota^{m',n'}_{k'}$ denote the equivariant injections $V(k) \lra V(m) \otimes V(n)$ and $V(k') \lra V(m') \otimes V(n')$, respectively.

%\begin{lemma}	\label{lemma:lemma1}
%Let $(m,m',m'') \in \mathrm T$ and suppose that $m,m',m'' > 0$.
%\begin{itemize}
%	\item[i)] If $|m-m'| < m'' < m+m'$, then $(m-2,m',m'')$, $(m,m'-2,m'')$ and $(m,m',m''-2)$ all belong to $\mathrm T$.
%	\item[ii)] If $m = m'+m''$, then $(m-2,m',m'') \in \mathrm T$.
%	\item[iii)] If $m' = m+m''$, then $(m,m'-2,m'') \in \mathrm T$.
%	\item[iv)] If $m'' = m+m'$, then $(m,m',m''-2) \in \mathrm T$.
%\end{itemize}
%\end{lemma}
%
%\begin{proof}
%i) As already noticed, $\mathrm T$ is stable under the action of $\mathrm S_3$. Moreover the partial order $\leq$ is compatible with this action, therefore it is enough to show that $(m-2,m',m'') \in \mathrm T$.
%
%Notice that $m \geq 2$, because $|m-m'| < m'' < m+m'$. Suppose that $m > m'$: hen $|m-2-m'| < |m-m'| < m'' < m+m'$, thus $(m-2,m',m'') \in \mathrm T$. Suppose instead that $m \leq m'$: then $|m-2-m'| = |m-m'|+2 \leq m'' < m+m'$, thus $(m-2,m',m'') \in \mathrm T$.
%
%The proof of the other claims is straightforward.
%\end{proof}

\begin{proposition}
Let $\ul m$ and $\ul n \in \mathrm T$, then $\grG(\ul m) \cdot \grG(\ul n) = \grG(\ul m+\ul n)$.
\end{proposition}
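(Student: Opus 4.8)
Since $\mC[G]^{\diag \mathrm{SL(2)}}=\bigoplus_{\ul k\in\mathrm T}V(\ul k)$ is multiplicity free and $\grG(\ul m)\cdot\grG(\ul n)$ is a $G$-submodule, the inclusion $\grG(\ul m)\cdot\grG(\ul n)\subseteq\grG(\ul m+\ul n)$ will follow at once from the Clebsch--Gordan rule: if $\ul q\le\ul m$ and $\ul r\le\ul n$, every $V(\ul k)$ occurring in $V(\ul q)\cdot V(\ul r)$ has $k_i\le q_i+r_i\le m_i+n_i$ and $k_i\equiv m_i+n_i\pmod2$, whence $\ul k\le\ul m+\ul n$. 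Conversely, again by multiplicity-freeness, to get $\grG(\ul m+\ul n)\subseteq\grG(\ul m)\cdot\grG(\ul n)$ it suffices to show that for every $\ul k\in\mathrm T$ with $\ul k\le\ul m+\ul n$ there are $\ul q\le\ul m$ and $\ul r\le\ul n$ in $\mathrm T$ with $V(\ul k)\subseteq V(\ul q)\cdot V(\ul r)$.

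I would produce such a pair by one of two mechanisms. The first is additive and needs no input beyond highest weight theory: if $\ul k=\ul q+\ul r$ (sum of $V(m)$-coordinates) for some $\ul q\le\ul m$ and $\ul r\le\ul n$ in $\mathrm T$, then $V(\ul k)=V(\ul q+\ul r)$ is the Cartan component of $V(\ul q)\otimes V(\ul r)$; the product in $\mC[G]^{\diag \mathrm{SL(2)}}$ of the two highest weight vectors is a nonzero highest weight vector of weight $\ul q+\ul r$, so $V(\ul k)\subseteq V(\ul q)\cdot V(\ul r)$. Equivalently, in the language of \eqref{eq:decomposition}, the summand $s^{(\ul m+\ul n)-\ul k}V_{\ul k}$ of $\grG(\ul m+\ul n)$ is hit by $s^{\ul m-\ul q}V_{\ul q}\cdot s^{\ul n-\ul r}V_{\ul r}$.

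The second mechanism is for the $\ul k$ that admit no such additive splitting, and here Lemma~\ref{lemma:prodotti} is the tool. The failure of an additive splitting is a parity phenomenon forced by an extremal coordinate of $\ul k$, the sharpest case being a vanishing coordinate $k_l=0$: then $q_l=r_l=0$ is forced, and the triangle inequalities defining $\mathrm T$ in turn force equalities among the remaining coordinates of $\ul q$ and of $\ul r$ that may be incompatible with the congruences $q_i\equiv m_i$, $r_i\equiv n_i$. I would turn this rigidity to my advantage: for such $\ul k$ one can choose admissible $\ul q\le\ul m$, $\ul r\le\ul n$ in which enough coordinates are pushed to $0$ or to an extreme that the relevant four-fold tensor $V(q_i)\otimes V(q_j)\otimes V(r_i)\otimes V(r_j)$ contains $V(k_l)$ with multiplicity one. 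The form of Lemma~\ref{lemma:prodotti} singling out the $l$-th factor, legitimate by the $S_3$-symmetry of $\mathrm T$ and of $\mC[G]^{\diag \mathrm{SL(2)}}$ recorded before the lemma, then yields $V(\ul k)\subseteq V(\ul q)\cdot V(\ul r)$.

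The hard part will be the combinatorial bookkeeping that makes this dichotomy exhaustive: one has to verify that every $\ul k\le\ul m+\ul n$ either splits additively inside the two boxes $\{\ul q\le\ul m\}$ and $\{\ul r\le\ul n\}$, or else possesses an extremal coordinate together with an admissible pair $(\ul q,\ul r)$ realizing the multiplicity-one hypothesis of Lemma~\ref{lemma:prodotti}. I expect all the subtlety to sit in the non-splitting case, where the triangle inequalities of $\mathrm T$ and the parity constraints $q_i\equiv m_i$, $r_i\equiv n_i$ interact; this ultimately reduces to a finite check on triples in the tensor semigroup of $\mathrm{SL}(2)$.
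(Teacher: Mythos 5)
Your setup is sound: the easy inclusion $\grG(\ul m)\cdot\grG(\ul n)\subseteq\grG(\ul m+\ul n)$, the reduction via multiplicity-freeness to exhibiting, for each $\ul k\leq\ul m+\ul n$, a single pair $\ul q\leq\ul m$, $\ul r\leq\ul n$ with $V(\ul k)\subseteq V(\ul q)\cdot V(\ul r)$, the Cartan-component mechanism, and the use of Lemma~\ref{lemma:prodotti} (with its multiplicity-one hypothesis, which you rightly keep, since the degenerate example $V(2,2,2)\not\subset V(1,1,2)^2$ shows abstract tensor multiplicities do not suffice) are exactly the ingredients the paper uses. But the proof stops where the proposition actually lives: you never prove that your dichotomy --- additive splitting, or else a \emph{single} application of Lemma~\ref{lemma:prodotti} to some admissible $(\ul q,\ul r)$ --- covers every $\ul k\leq\ul m+\ul n$. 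You explicitly defer this as ``the hard part,'' and the closing claim that it ``reduces to a finite check on triples in the tensor semigroup'' is unsubstantiated: $\ul m,\ul n,\ul k$ are unbounded, and no reduction to bounded parameters is given. Since the whole content of the statement is precisely this exhaustiveness, what you have is a strategy outline with a genuine gap, not a proof.

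The gap is not a formality, because your dichotomy is \emph{stronger} than what the paper establishes, and it is not clear it holds (or, if it does, it would require a case analysis at least as delicate as the one you are trying to avoid). The paper instead argues by induction on $m+n+m'+n'+m''+n''$: the extreme case $\ul k=\ul m+\ul n$ is the product of highest weight vectors; Lemma~\ref{lemma:prodotti} is invoked only for a couple of small base configurations (e.g.\ $\ul m=\ul n=(1,0,1)$, $\ul k=(0,0,0)$, and the case $\ul k=(m,n',k'')$ in its Case 1); all remaining cases are handled by reductions that shrink $\ul m$ --- replacing $\ul m$ by $(m,m',m''-2)$ or by $(m-1,m'-1,m'')$ --- combined with factorizations such as
\[
V(\ul k)\subset V(1,1,0)\cdot V(k-1,k'-1,k'')\subset V(1,1,0)\cdot\grG(m-1,m'-1,m'')\cdot\grG(\ul n)\subset\grG(\ul m)\cdot\grG(\ul n),
\]
a three-factor certificate that is of neither of your two allowed forms. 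This inductive structure is what lets the paper keep the multiplicity-one verifications down to a handful of tiny cases; your plan pushes that verification into full generality (all non-splitting $\ul k$, all admissible $(\ul q,\ul r)$, all choices of distinguished coordinate), and that is exactly the combinatorial work you have not done. To repair the proof along the paper's lines, replace the global dichotomy by an induction in which additive splitting and the lemma are only needed in degenerate small cases.
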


\begin{proof}
We have to show that, for all $\ul k \leq \ul m + \ul n$, it holds $V(\ul k) \subset \grG(\ul m) \cdot \grG(\ul n)$. We proceed by induction on the sum $m+n+m'+n'+m''+n''$. Let $\ul k \in \mathrm T$ with $\ul k \leq \ul m + \ul n$.

Suppose first that $\ul k = \ul m + \ul n$. If $v_{\ul m} \in V(\ul m)$ and $v_{\ul n} \in V(\ul n)$ are highest weight vectors, then their product $v_{\ul m} v_{\ul n}$ is a highest weight vector of weight $\ul m + \ul n$, thus $V(\ul m + \ul n) \subset V(\ul m) \cdot V(\ul n) \subset \grG(\ul m) \cdot \grG(\ul n)$. Therefore we will assume that $\ul k < \ul m + \ul n$.

\textit{Case 1.} Suppose that both $\ul m$ and $\ul n$ have zero entries. Then $\ul m$ and $\ul n$ have a unique zero entry, and up to a permutation of the coordinates we may assume that $\ul m = (m,0,m)$ and $n'' > 0$. Thus we have either $\ul n = (n,0,n)$ or $\ul n = (0,n',n')$.

Suppose that $\ul n = (n,0,n)$. Then it must be $\ul k = (k,0,k)$ with $k < m+n$, thus $\ul k \leq \ul m +\ul n - (2,0,2)$. If $\ul m = \ul n = (1,0,1)$, then $\ul k = (0,0,0)$ and $V(\ul k) \subset V(\ul m) \cdot V(\ul n)$ by Lemma \ref{lemma:prodotti}. Therefore we can assume that either $m \geq 2$ or $n \geq 2$. Suppose that $m \geq 2$, the other case is similar: then $(m-2,0,m-2) \in \mathrm T$, and by induction
$$
	V(\ul k) \subset \grG(m-2,0,m-2) \cdot \grG(\ul n) \subset \grG(\ul m) \cdot \grG(\ul n).
$$

Suppose now that $\ul n = (0,n',n')$. Then $\ul k < (m,n',m+n')$, and since $k'' \leq k+k' \leq m+n'$ it must be $k'' < m+n'$. If $k+k' = m+n'$ or if $m=n' = 1$, then $\ul k = (m,n',k'')$ and by Lemma \ref{lemma:prodotti} we get $V(\ul k) \subset V(\ul m) \cdot V(\ul n)$. Thus we can assume that $k+k' < m+n'$, and that either $m \geq 2$ or $n' \geq 2$. Suppose we are in the first case, the other one is similar. Then $(m-2,0,m-2) \in \mathrm T$ and $\ul k \leq (m-2,m-2,0) + \ul n$, and the claim follows again by the induction.

\textit{Case 2.} Suppose that $k'' < m'' + n''$ and that either $m''>|m-m'|$ or $n''>|n-n'|$. We can assume that we are in the first case. Then $(m,m',m''-2) \in \mathrm T$ and $\ul k \leq (m,m',m''-2) + \ul n$, thus by induction
$$
V(\ul k) \subset \grG(m,m',m''-2) \cdot \grG(\ul n) \subset \grG(\ul m) \cdot \grG(\ul n).
$$

Notice that this covers the case $\ul k = (0,0,0)$. Indeed by Case 1 we can assume that either $\ul m$ or $\ul n$ has no zero entry. Therefore, up to a permutation of the coordinates, we always have either $m'' > |m-m'|$ or $n'' > |n-n'|$.

\textit{Case 3.} Suppose that $\ul k \neq (0,0,0)$. Up to a permutation of the coordinates we can assume that $k'' < k+k'$. 

Assume that $m'' = m + m'$, and $n'' = n+n'$. By Case 1 and by symmetry we can also assume that all the entries of $\ul m$ are nonzero.  Then $(m,m',m''-2) \in \mathrm T$. Moreover by assumption we have $k'' < k + k' \leq m+n + m'+n' = m'' + n''$. Therefore $\ul k \leq (m,m',m''-2) + \ul n$, and by induction
\[
	V(\ul k) \subset \grG(m,m',m''-2) \cdot \grG(\ul n) \subset \grG(\ul m) \cdot \grG(\ul n).
\]

Assume now that either $m'' < m + m'$ or $n'' < n+n'$. We can assume that we are in the first case. Then $(m-1,m'-1,m'')$ and $(k-1,k'-1,k'')$ both belong to $\mathrm T$ and $(k-1,k'-1,k'') < (m-1,m'-1,m'') + \ul n$, thus by induction
$$
V(k-1,k-1,k'') \subset \grG(m-1,m'-1,m'') \cdot \grG(\ul n).
$$
Therefore
$$
	V(\ul k) \subset V(1,1,0) \cdot V(k-1,k'-1,k'') \subset V(1,1,0) \cdot \grG(m-1,m'-1,m'') \cdot \grG(\ul n),
$$
and the claim follows because $V(1,1,0) \cdot \grG(m-1,m'-1,m'')  \subset \grG(\ul m)$.
\end{proof}

\begin{corollary}
The multiplication $m_{D,E}$ is surjective for all $D,E \in \mN\grD$.
\end{corollary}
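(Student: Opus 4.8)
The plan is to deduce the Corollary directly from the preceding Proposition, the only work being to carry the statement across the identifications already set up in this subsection. First I would recall that the map $\gro$ identifies the semigroup $\mN\grD$ of globally generated line bundles on $X$ with the tensor semigroup $\mathrm T$, and that this identification is additive: an arbitrary pair $D,E \in \mN\grD$ thus corresponds to a pair $\ul m, \ul n \in \mathrm T$, with $D+E$ corresponding to $\ul m + \ul n$ and with $\calL_D = \calL_{\ul m}$, $\calL_E = \calL_{\ul n}$, $\calL_{D+E} = \calL_{\ul m + \ul n}$.

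Next I would invoke the $G$-equivariant isomorphisms $\grG(X,\calL_{\ul m}) \simeq \grG(\ul m)$, $\grG(X,\calL_{\ul n}) \simeq \grG(\ul n)$ and $\grG(X,\calL_{\ul m + \ul n}) \simeq \grG(\ul m + \ul n)$ obtained from the decomposition \eqref{eq:decomposition}, together with the commutative square displayed above relating the multiplication of sections $m_{D,E}$ to the multiplication in the invariant ring $\mC[G]^{\diag\mathrm{SL(2)}}$. Under these identifications the image of $m_{D,E}$ is exactly the product $\grG(\ul m)\cdot\grG(\ul n)$ computed inside $\mC[G]^{\diag\mathrm{SL(2)}}$, viewed as a $G$-submodule of $\grG(\ul m + \ul n)$.

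Finally I would apply the Proposition, which gives $\grG(\ul m)\cdot\grG(\ul n) = \grG(\ul m + \ul n)$, and conclude that $m_{D,E}$ is surjective. There is no genuine obstacle at this stage: the entire substance of the surjectivity statement is contained in the Proposition, proved above by induction on $m+n+m'+n'+m''+n''$, and the Corollary is merely the translation of that multiplicative identity back into the language of line bundles on $X$ through the bijection $\mN\grD \simeq \mathrm T$.
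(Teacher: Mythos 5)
Your proposal is correct and is exactly the deduction the paper intends (the paper leaves it implicit): the identification $\mN\grD \simeq \mathrm T$ via $\gro$, the isomorphisms $\grG(X,\calL_{\ul m}) \simeq \grG(\ul m)$ from \eqref{eq:decomposition}, and the commutative square reduce the surjectivity of $m_{D,E}$ to the Proposition $\grG(\ul m)\cdot\grG(\ul n) = \grG(\ul m+\ul n)$. Nothing is missing.
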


\section{Normality and weight semigroups}\label{s:4}
%%%%%%%%%%%%%%%%%%%%%%%%%%%%%%%%%%%%%%%%%%%%%%%%%%
%%%%%%%%%%%%%%%%%%%%%%%%%%%%%%%%%%%%%%%%%%%%%%%%%%

Let $K \subset G$ be a Hermitian symmetric subgroup. As it is well known (see e.g.\ \cite[Section~5.5]{RRS}), this is equivalent to require that $K$ is the Levi factor of a parabolic subgroup $Q$ of $G$ with Abelian unipotent radical. This implies that $Q\subset G$ is a maximal parabolic subgroup (we assume $G$ to be almost simple).

Recall that we have fixed a maximal torus $T$ in $K$ and a Borel subgroup $B$ of $K$ containing $T$. We denote by $\mathcal X(T)$ the weight lattice of $T$. Recall the decomposition $\gog = \gok \oplus \gop$.

The torus $T$ is also a maximal torus in $G$. We can choose a Borel subgroup of $G$ containing $B$, the Borel subgroup of $K$, and contained in $Q$.

If $Q_-$ denotes the opposite parabolic subgroup, then we get the $K$-module decomposition
$$\gop = \gop_1 \oplus \gop_2,$$
where $\gop_1$ (resp.\ $\gop_2$) is the Lie algebra of the unipotent radical of $Q$ (resp.\ $Q_-$). Notice that $\gop_1$ and $\gop_2$ are irreducible $K$-modules, dual to each other. More precisely, if $\theta_G \in \calX(T)$ denotes the highest root of $G$ (w.r.t.\ the above choice of a Borel subgroup in $G$), then $\gop_1 = V(\theta_G)$ is the irreducible $K$-module of highest weight $\theta_G$, and $\gop_2 = V(\theta_G)^*$ is the irreducible $K$-module of lowest weight $-\theta_G$.
 
We denote by $Z_K$ the identity component of the center of $K$, and by $\goz_K$ its Lie algebra. Since $G/K$ is a Hermitian symmetric space, $\dim Z_K = 1$.

\begin{proposition}
$Z_K$ acts non-trivially on $\gop_1$ and $\gop_2$.
\end{proposition}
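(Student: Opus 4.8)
The plan is to argue by contradiction, exploiting that $Z_K$ lies in the center of $K$ together with the duality between $\gop_1$ and $\gop_2$. Since $\gop_1$ is an irreducible $K$-module and $Z_K$ is a connected central subgroup of $K$, by Schur's lemma $Z_K$ acts on $\gop_1$ by scalars, hence by a single algebraic character $\chi \colon Z_K \ra \mC^\times$; nontriviality of the action is thus equivalent to $\chi \neq 0$. Suppose, for contradiction, that $\chi$ is trivial, i.e.\ that $Z_K$ acts trivially on $\gop_1$.

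First I would observe that $Z_K$ automatically acts trivially on $\gok$: indeed every element of $Z_K \subset Z(K)$ commutes with all of $K$, so the adjoint action of $Z_K$ on $\gok$ is trivial. Next, since $\gop_2 = V(\theta_G)^* \simeq \gop_1^*$ as $K$-modules and $Z_K$ is a torus, acting trivially on $\gop_1$ forces $Z_K$ to act trivially on the dual module $\gop_2$ as well. Combining these facts with the decomposition $\gog = \gok \oplus \gop_1 \oplus \gop_2$, I conclude that $Z_K$ acts trivially on the whole adjoint $K$-module $\gog$.

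At this point the contradiction is immediate: $G$ is connected, so the kernel of its adjoint representation is exactly the center $Z(G)$; hence triviality of the $Z_K$-action on $\gog$ means $Z_K \subset Z(G)$. But $G$ is almost simple, so $Z(G)$ is finite, whereas $Z_K$ is a one-dimensional torus ($\dim Z_K = 1$), which cannot be contained in a finite group. This contradiction shows $\chi \neq 0$, so $Z_K$ acts nontrivially on $\gop_1$, and by the duality above also on $\gop_2$.

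The argument has essentially no obstacle; the only point requiring a little care is the passage from triviality on $\gop_1$ to triviality on $\gop_2$, which relies on $\gop_2 \simeq \gop_1^*$ and on the fact that a torus acts trivially on a module exactly when it acts trivially on the dual. As an alternative one can argue directly with weights: the semisimple generator $H$ of $\goz_K$ induces the $\mZ$-grading $\gog = \gop_2 \oplus \gok \oplus \gop_1$ attached to the parabolic $Q$, a three-step grading precisely because the unipotent radical of $Q$ is Abelian, so that $H$ acts on $\gop_1$ by the scalar $+1 \neq 0$, giving the claim at once.
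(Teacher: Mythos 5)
Your proof is correct and follows essentially the same route as the paper's: assume the central torus $Z_K$ acts trivially on $\gop_1$, use the duality $\gop_2 \simeq \gop_1^*$ and triviality on $\gok$ to conclude it acts trivially on all of $\gog$, hence lies in $Z(G)$, contradicting the semisimplicity of $G$ since $\dim Z_K = 1$. The only cosmetic difference is that the paper phrases the duality step by writing the action as $z(\xi).e = \xi^m e_1 + \xi^{-m} e_2$ and assuming $m=0$, while you invoke Schur's lemma and dual modules explicitly; your closing remark via the grading element of $\goz_K$ is a valid alternative but not what the paper does.
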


\begin{proof}
Let $z(\xi)$ ($\xi \in \mC^*$) be a parametrization of $Z_K \simeq \mC^*$. Since $\gop_2 \simeq \gop_1^*$, it follows that $z(\xi).e = \xi^m e_1 + \xi^{-m} e_2$ with $m \in \mZ$. Suppose that $m = 0$: then $Z_K$ acts trivially on $\gop = \gop_1 \oplus \gop_2$, hence it acts trivially on $\gog$ since it acts trivially on $\gok$. Therefore $Z_K$ is in the center of $G$, which is absurd since $G$ is semisimple.
\end{proof}

It follows that $\gop_1$ and $\gop_2$ are not isomorphic as representations of $K$. Let $\chi$ be the character of $Z_K$ acting on $\gop_1$. By making use of the classification of the standard parabolic subgroups of $G$ with Abelian unipotent radical, we can describe this character explicitly.

Indeed, let $S_G = \{\gra_1, \ldots, \gra_n\}$ be the set of simple roots of $G$, and denote by $[\theta_G : \alpha_i]$ the coefficient of $\alpha_i$ in $\theta_G$. Then a standard parabolic subgroup $Q \subset G$ has an Abelian unipotent radical if and only if it is maximal, corresponding to a root $\gra_p \in S_G$ such that $[\theta_G:\gra_p] = 1$. In the following list we give all the simple roots (of irreducible root systems of classical type) with this property:
\begin{itemize}
	\item[(1)] If $G$ is of type $\sfA_n$: $\gra_1, \ldots, \gra_n$;
	\item[(2)] If $G$ is of type $\sfB_n$: $\gra_1$;
	\item[(3)] If $G$ is of type $\sfC_n$: $\gra_n$;
	\item[(4)] If $G$ is of type $\sfD_n$: $\gra_1, \gra_{n-1}, \gra_n$.
	%\item[(5)] If $G$ is of type $\sfE_6$: $\gra_1, \gra_6$
	%\item[(6)] If $G$ is of type $\sfE_7$: $\gra_7$
\end{itemize}

Let $\got_G \subset \gog$ be the Cartan subalgebra generated by the fundamental coweights $\gro_1^\vee, \ldots, \gro_n^\vee$, and let $\got_K^\mss \subset \got_G$ be the subalgebra generated by the simple coroots of $K$. Since $K$ is the Levi subgroup of $G$ defined by the set of simple roots $S_G \senza \{\gra_p\}$, it follows that $\goz_K$, which is by definition the annihilator of $(\got_K^\mss)^*$ in $\got_G$, is generated by the fundamental coweight $\gro_p^\vee$.

On the other hand, assuming $G$ simply connected, we have that the cocharacter lattice $\calX(T)^\vee$ is equal to the coroot lattice $\mZ S^\vee$, therefore $$\calX(Z_K)^\vee = \goz_K \cap \mZ S^\vee
$$ 
is generated by $m \gro_p^\vee$ where $m \in \mathbb N$ is the minimum such that $m \gro_p^\vee \in \mZ S^\vee$. We list the value of $m$ here below for all the possible cases of $G$ and $\gra_p$: 
\begin{itemize}
	\item[-] $(\sfA_n, \alpha_p)$: $m = (n+1)/\gcd(p,n+1)$;
	\item[-] $(\sfB_n, \gra_1)$, $(\sfC_n, \gra_n)$, $(\sfD_n, \gra_1)$	%, $(\sfE_7, \gra_7)$
: $m = 2$;
	\item[-] $(\sfD_n, \gra_{n-1})$, $(\sfD_n, \gra_n)$: $m=2$ if $n$ is even, $m=4$ if $n$ is odd.
%	\item[-] $(\sfE_6, \gra_1)$, $(\sfE_6, \gra_6)$: $m=3$
\end{itemize}
If $z(\xi)$ is the 1-parameter-subgroup in $Z_K$ given by $m\gro_p^\vee$, it follows then
$$
	\chi(z(\xi)) = \xi^{m\theta_G(\omega_p^\vee)} = \xi^m.
$$

In the following proposition we will show that if $Ke \subset \gop$ is a nilpotent orbit, then $\ol{Ke} \subset \gop$ is a bicone with respect to the decomposition $\gop = \gop_1 \oplus \gop_2$. This will allow us to study the normality of $\ol{Ke}$ by making use of Theorem~\ref{teo:normalita}.

\begin{proposition}
Write $e = e_1 + e_2$ with $e_1 \in \gop_1$ and $e_2 \in \gop_2$. Then $\xi_1 e_1 + \xi_2 e_2 \in Ke$ for all $\xi_1, \xi_2 \in \mC^*$.
\end{proposition}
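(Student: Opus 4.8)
The plan is to realize the prescribed rescaling $\xi_1 e_1 + \xi_2 e_2$ as the image of $e$ under a suitable element of $K$, by combining two commuting one-parameter subgroups of $K$ that act coordinatewise on the decomposition $\gop = \gop_1 \oplus \gop_2$. The first is the central torus $Z_K \simeq \mC^*$: by the computation preceding the statement its chosen parametrization satisfies $z(\xi).e = \xi^m e_1 + \xi^{-m} e_2$ with $m > 0$, so $Z_K$ realizes all rescalings of the ``anti-diagonal'' shape $(\eta, \eta^{-1})$. The second one-parameter subgroup will come from the normal triple attached to $e$ and will realize the ``diagonal'' rescalings $(s, s)$; together these two subtori generate the full two-dimensional torus acting as $(\xi_1, \xi_2)$ on $(e_1, e_2)$, which yields the claim.

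To produce the diagonal subgroup, I would start from a normal $\mathfrak{sl}_2$-triple $\{h, e, f\}$ as in Appendix~\ref{A}, so that $h \in \gok$. Since $\gop_1$ and $\gop_2$ are $K$-submodules of $\gop$, the bracket with $h$ preserves each of them, whence $[h, e_i] \in \gop_i$ for $i = 1, 2$; comparing components in the direct sum $\gop_1 \oplus \gop_2$ in the relation $[h, e] = 2e$ forces $[h, e_1] = 2 e_1$ and $[h, e_2] = 2 e_2$. Next I would integrate the inclusion $\mathfrak{sl}_2 \lra \gog$ determined by the triple to a homomorphism $\phi \colon \mathrm{SL}(2) \lra G$ (possible since $\mathrm{SL}(2)$ is simply connected) and set $\gamma(t) = \phi(\diag(t, t^{-1}))$. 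Then $\gamma$ is a cocharacter with differential $h$, whose image lies in $K$ because its Lie algebra $\mC h$ is contained in $\gok$; and $\mathrm{Ad}\,\gamma(t)$ acts as $t^k$ on the $\mathrm{ad}(h)$-eigenspace of eigenvalue $k$. By the weight computation above this gives $\gamma(t).e = t^2 e_1 + t^2 e_2$.

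Finally I would combine the two actions: for $t, \xi \in \mC^*$ we get $(\gamma(t) z(\xi)).e = (t^2 \xi^m) e_1 + (t^2 \xi^{-m}) e_2 \in Ke$. The assignment $(t, \xi) \mapsto (t^2 \xi^m, t^2 \xi^{-m})$ is a homomorphism $(\mC^*)^2 \lra (\mC^*)^2$ given on cocharacter lattices by the integer matrix $\left(\begin{smallmatrix} 2 & m \\ 2 & -m \end{smallmatrix}\right)$, whose determinant $-4m$ is nonzero because $m > 0$; hence it is an isogeny and, $\mC$ being algebraically closed, it is surjective. Therefore for any $\xi_1, \xi_2 \in \mC^*$ one can choose $t, \xi$ with $t^2 \xi^m = \xi_1$ and $t^2 \xi^{-m} = \xi_2$, which gives $\xi_1 e_1 + \xi_2 e_2 \in Ke$. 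The only genuinely delicate points are the two structural inputs --- that $h$ can be taken in $\gok$ (so that $\gamma$ stays inside $K$ and the whole construction remains within the single orbit $Ke$) and that $m \neq 0$ (established in the preceding paragraphs, and exactly what makes the two subtori independent); once these are in hand the argument is a short linear-algebra surjectivity check, so I do not expect a serious obstacle.
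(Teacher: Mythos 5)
Your proposal is correct and follows essentially the same route as the paper: the paper also combines the one-parameter subgroup of $K$ obtained from the semisimple element $h$ of the normal triple (which rescales $e_1$ and $e_2$ by the same factor) with the central torus $Z_K$ acting as $\xi^m$ on $\gop_1$ and $\xi^{-m}$ on $\gop_2$ with $m \neq 0$. The only difference is one of detail: you integrate the triple through $\mathrm{SL}(2)$ and verify surjectivity via the determinant of the exponent matrix, whereas the paper exponentiates $h$ directly and leaves that final surjectivity check implicit.
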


\begin{proof}
Let $\{e,f,h\}$ be a normal $\mathfrak{sl}_2$-triple containing $e$, then $h \in K$ and $[h,e] = 2e$. If $t(\xi) = \exp(\xi h)$ ($\xi \in \mC^*$) is the one parameter subgroup of $K$ obtained exponentiating the line generated by $h$, it follows that $t(\xi).e = \xi e$, namely $t(\xi).e_1 = \xi e_1$ and $t(\xi).e_2 = \xi e_2$.

On the other hand, by the previous proposition, the connected component $Z_K$ of the center of $K$ acts non-trivially on $\gop_1$, therefore we can take a parametrization $z(\xi)$ of $Z$ ($\xi \in \mC^*$) such that $z(\xi).e = \xi^m e_1 + \xi^{-m} e_2$ with $m \neq 0$. It follows that every combination of $e_1$ and $e_2$ with non-zero coefficients can be written in the form $t(\xi) z(\xi').e$ for some $\xi,\xi' \in \mC^*$.
\end{proof}

Let $X$ be the wonderful compactification of $K/\mathrm N_K(K_e)$, and denote by $\Sigma$ and by $\Delta$ its set of spherical roots and its set of colors. For $i=1,2$, let $\pi_i \colon \gop \lra \gop_i$ be the projections corresponding to the decomposition $\gop = \gop_1 \oplus \gop_2$. If $\pi_i(e) \neq 0$, we denote by $D_{\gop_i} \in \mN \grD$ the element such that $\gop_i = V^*_{D_{\gop_i}}$, and the image of $\ol{Ke}$ in $\mP(\gop_i)$ coincides with the image of the corresponding map $\phi_{D_{\gop_i}} \colon X \lra \mP(\gop_i)$. In particular, if $e$ projects non-trivially both on $\gop_1$ and $\gop_2$, then the image of $\ol{Ke}$ in $\mP(\gop_1) \times \mP(\gop_2)$ coincides with the image of $X$ mapped diagonally via $\phi_{D_{\gop_1}}$ and $\phi_{D_{\gop_2}}$. For convenience we also set $D_{\gop_i} = 0$ if $\pi_i(e) = 0$, and we denote
$$
 \grD_\gop(e) = \{D_{\gop_1},  D_{\gop_2} \}.
$$

By Theorem~\ref{teo: projnorm} the multiplication of sections of globally generated line bundles on the wonderful compactification of $K/\mathrm N_K(K_e)$ is surjective, hence by Theorem~\ref{teo:normalizzazione} it follows that $\ol{Ke}\subset\gop$ is normal if and only if every $D \in \grD_\gop(e)$ is minuscule in $\mN\Delta$ with respect to the partial order $\leq_\Sigma$, or zero. Below we will see that this condition is always fulfilled, hence we get the following.

\begin{theorem}\label{teo:allnormal}
Let $(\gog, \gok)$ be a classical symmetric pair of Hermitian type and let $Ke \subset \gop$ be a spherical nilpotent orbit. Then $\ol{Ke}$ is normal.
\end{theorem}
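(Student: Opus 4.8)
The plan is to assemble the two theorems already proved with the explicit combinatorial data of the spherical systems computed in Section~\ref{s:1}. By the preceding proposition, $\ol{Ke}$ is a bicone with respect to $\gop = \gop_1 \oplus \gop_2$: a genuine bicone when $e_1$ and $e_2$ are both non-zero, and an ordinary cone when one of them vanishes, in which case we use the convention $D_{\gop_i} = 0$. Moreover, by Theorem~\ref{teo: projnorm} the multiplication of sections of globally generated line bundles on the wonderful compactification $X$ of $K/\mathrm N_K(K_e)$ is surjective. Thus the hypotheses of Theorem~\ref{teo:normalita} (equivalently, Theorem~\ref{teo:normalizzazione}) are met, and $\ol{Ke}$ is normal if and only if each element of $\grD_\gop(e) = \{D_{\gop_1}, D_{\gop_2}\}$ is minuscule in $\mN\grD$ with respect to $\leq_\Sigma$ (where the value $0$ is allowed). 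In this way the whole statement is reduced to a single combinatorial assertion: \emph{each non-zero $D_{\gop_i}$ is minuscule}.

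To settle this I would proceed case-by-case along the five families of Tables~7--11, using the expansions of the spherical roots in terms of the colors recorded in Section~\ref{s:1}. The first step in each case is to single out the classes $D_{\gop_1}, D_{\gop_2} \in \mN\grD$: these are the globally generated classes with $V_{D_{\gop_i}} = \gop_i^*$, equivalently those with $\omega(\calL_{D_{\gop_i}}) = \grl_i^*$, where $\grl_1 = \theta_G$ is the highest root and $\grl_2$ is its dual highest weight. By the decomposition \eqref{eq:decomposition}, minusculeness of $D_{\gop_i}$ means exactly that no non-zero $\gamma \in \mN\Sigma$ satisfies $D_{\gop_i} - \gamma \in \mN\grD$; concretely, subtracting any non-zero non-negative combination of spherical roots from $D_{\gop_i}$ must force a strictly negative coefficient on some color. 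This is where the sign pattern of the Cartan pairing does the work: the recurring shape $\cdots - D + D + D - D \cdots$ visible in the formulas of Section~\ref{s:1} shows that each spherical root introduces negative color-coefficients, and one checks in each case that these cannot all be cancelled within $\mN\grD$, so that indeed $\grG(X,\calL_{D_{\gop_i}})$ stays irreducible.

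The cases with $\Sigma = \varnothing$ listed at the start of Section~\ref{s:1} are immediate, since there $\mN\Sigma = 0$ and every color is automatically minuscule; similarly the cones in which one $D_{\gop_i}$ vanishes cause no trouble. The symmetric and reductive families require only routine, if lengthy, bookkeeping of the explicit systems, and the small system $\mathsf{a^x}(1,1,1)$ of cases~1.4 and~1.5 is checked directly. The main obstacle I expect is the morphisms of type~$\mathcal L$, namely cases~1.6 and~1.7 of Table~7 with $r+s>0$: there the wonderful variety is a non-trivial fiber product obtained by parabolic induction from the comodel system $\mathsf{a^y}(r,r)+\mathsf a(t)+\mathsf{a^y}(s,s)$, so both the identification of $D_{\gop_1}, D_{\gop_2}$ and the minusculeness check must be carried out on that more intricate diagram, exploiting the distinguished subsets of colors $\Delta_1, \Delta_2$ of Section~\ref{ss:typeL}. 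Once minusculeness is confirmed in every case the theorem follows at once, and as a by-product the irreducible decomposition of $\grG(X,\calL_{D_{\gop_i}})$ yields the generators of the weight semigroup $\grG(\ol{Ke})$ tabulated in Tables~1--6.
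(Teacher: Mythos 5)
Your proposal follows the paper's own proof essentially verbatim: the paper likewise combines the bicone proposition with Theorem~\ref{teo: projnorm} and Theorems~\ref{teo:normalizzazione}/\ref{teo:normalita} to reduce normality to the single combinatorial assertion that each non-zero $D_{\gop_i}$ is minuscule in $(\mN\grD,\leq_\grS)$, and then verifies this case by case along the families of Tables~7--11, treating the $\grS=\vuoto$, symmetric, reductive and type-$\mathcal L$ cases separately just as you outline. The only difference is cosmetic and lies in the verification step: where you propose a direct sign-cancellation analysis of the Cartan pairing, the paper mostly invokes the fact that every covering difference $\grg$ of the relevant posets satisfies $\height(\grg^+)=2$ (seen directly in cases 1.4--1.5, and via \cite[Proposition~3.2]{BGM} for the comodel-related cases 1.6--1.7), which immediately forces every color to be minuscule.
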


\begin{remark}
The normality of $\overline{Ge}$ is well known and may be deduced from \cite{KP2}. 
%(see also  \cite{Pa2}, when $Ge$ is spherical under the action of $G$). 
In particular, if $(\gog, \gok)$ is a classical symmetric pair of Hermitian type and $Ke$ is a spherical nilpotent orbit in $\gop$, then $\overline{Ge}$ is always normal. 
\end{remark}

Let us denote by $\Gamma(X)$ the weight semigroup of a $K$-spherical variety $X$,
$$
	\grG(X) = \{\grl \in \mathcal X(T) \; : \; \Hom(\mC[X], V(\grl)) \neq 0 \}.
$$
Denoting for $i=1,2$ the highest weight of $\gop_i^*$ as a $G$-module by $\grl_i^*$,
the previous theorem together with Theorem~\ref{teo:normalizzazione} imply that $\grG(\ol{Ke})$ consists of the weights
$$
	 n_1\lambda_1^*+n_2\lambda_2^*-(n_1D_{\gop_1}+n_2D_{\gop_2}-E)
$$
for $(n_1,n_2)\in\mN^2$, $E \in \mN\grD$ with $E \leq_\grS n_1 D_{\gop_1} + n_2 D_{\gop_2}$, see equation \eqref{eq:weights}.

Beyond showing the normality of $\ol{Ke}$, we obtain the weight semigroups $\grG(\ol{Ke})$ by computing the corresponding semigroups
$$
\grG_{\grD_\gop(e)} = \{(n_1,n_2,E)\in\mN^2\times\mN \grD \; : \; E \leq_\grS n_1 D_{\gop_1} + n_2 D_{\gop_2} \}.
$$

The generators of the weight semigroup $\grG(\ol{Ke})=\grG(\wt{Ke})$ are given in Tables~1--6, in Appendix~\ref{B}. In the same tables we also provide the codimension of  $\overline{Ke} \smallsetminus Ke$ in $\overline{Ke}$. Notice that, if $\overline{Ke}$ is normal and the codimension of $\overline{Ke} \smallsetminus Ke$ in $\overline{Ke}$ is greater than 1, then $\mathbb C[\overline{Ke}] = \mathbb C[Ke]$, so that the weight semigroup of $Ke$ actually coincides with $\grG(\ol{Ke})$.

We now report the details of the computation of the semigroup $\grG_{\grD_\gop(e)}$. We omit the cases where $X$ is a flag variety or a parabolic induction of a wonderful symmetric variety (see Section~\ref{s:1}): in these cases the combinatorics of the spherical systems is easier. By \cite{Ki06}, the normality of $\overline{Ke}$ was already known in all these cases, since they all satisfy $\mathrm{ht}_\gop(e)=2$ (see Appendix~\ref{B}). Some of the corresponding weight semigroups $\grG(\ol{Ke})$ were obtained in \cite{Bi} by using different techniques. 

\begin{remark}
In \cite{Ni2}, for the complex symmetric pair $(\mathrm{SL}(p+q),\mathrm S(\mathrm{GL}(p)\times\mathrm{GL}(q)))$, K.~Nishiyama gave a description of the coordinate rings of the closures of some special spherical orbits, those which can be obtained as \emph{theta lift in the stable range}. Actually, in that symmetric pair, the only spherical orbits which are not theta lifts in the stable range correspond to the following cases: 1.4, 1.5, 1.6 ($r+s=q-1$) and 1.7 ($r+s=p-1$).
\end{remark}

\subsubsection*{Notation.} For all $E = \sum_{D \in \grD} k_D D \in \mZ\grD$, define its
\emph{positive part} $E^+= \sum_{k_D > 0} k_D D $
and its \emph{height} $\height(E) = \sum_{D \in
  \grD} k_D$.  

\subsection{Cases 1.4 and 1.5}

We consider the case 1.4, the other one is analogous. Let $\grS = \{\grs_1, \grs_2, \grs_3\}$ be the set of spherical roots and $\grD = \{D_1, D_2, D_3, D_4, D_5\}$ the set of colors of $X$, where we denote
$$\grs_1 = \gra', \quad \grs_2 = \gra_1, \quad \grs_3 = \gra_{p-1} $$
$$D_1 = D_{\gra'}^+, \quad D_2 = D_{\gra'}^-, \quad D_3 = D_{\gra_1}^+, \quad D_4 = D_{\gra_{p-2}}, \quad  D_5 = D_{\gra_2}$$
(if $p=4$, $D_4=D_5$).

We have in this case $\gop_1 = V(\gro_1+\gro'+\chi)$ and $\gop_2 = V(\gro_{p-1}+\gro'-\chi)$, $D_{\gop_1}=D_1$ and $D_{\gop_2}=D_2$. One can easily see that every covering difference $\grg \in \mN \grS$ is a simple root or the sum of two simple roots, thus satisfies $\height(\grg^+) = 2$. In particular every $D \in \mN\grD$ is minuscule, therefore Theorem~\ref{teo:normalita} implies that $\ol{Ke}$ is normal.

\begin{proposition}\label{prop:case1.4}
The semigroup $\grG_{\grD_\gop(e)}$ is generated by 
\[(1,0,D_1),\ (0,1,D_2),\ (1,1,D_3),\ (2,0,D_4),\ (0,2,D_5).\]
\end{proposition}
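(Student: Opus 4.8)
The plan is to identify $\grG_{\grD_\gop(e)}$ with the free monoid $\mN^5$ in such a way that the five proposed elements become the standard generators. The only input I need from the spherical system of case~1.4 computed in Section~\ref{s:1} is that the three elements
\[
\mu_1 = D_1 + D_2 - D_3, \qquad \mu_2 = 2D_1 - D_4, \qquad \mu_3 = 2D_2 - D_5
\]
belong to $\mN\grS$, which is read off from the Cartan pairing (note that $\mu_1,\mu_2,\mu_3$ are exactly the differences $n_1D_1+n_2D_2-E$ attached to the three degree-two candidate generators). Being linearly independent and lying in $\mN\grS\subset\mQ\grS$, while $\grS$ has rank $3$, they form a $\mQ$-basis of $\mQ\grS$; hence every $F\in\mZ\grS$ satisfies the two relations $v_1+v_3+2v_4=0$ and $v_2+v_3+2v_5=0$ on its color coordinates $(v_1,\dots,v_5)$, as one checks directly on each $\mu_i$.

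The forward inclusion is then immediate: for $(1,0,D_1)$ and $(0,1,D_2)$ the difference $n_1D_1+n_2D_2-E$ is $0\in\mN\grS$, while $(1,1,D_3)$, $(2,0,D_4)$ and $(0,2,D_5)$ yield exactly $\mu_1,\mu_2,\mu_3\in\mN\grS$. Thus all five proposed elements lie in $\grG_{\grD_\gop(e)}$.

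For the converse I would take an arbitrary $(n_1,n_2,E)\in\grG_{\grD_\gop(e)}$, write $E=\sum_{j=1}^5 k_j D_j$ with all $k_j\geq 0$ (as $E\in\mN\grD$), and consider $F=n_1D_1+n_2D_2-E\in\mN\grS$, whose color coordinates are $(n_1-k_1,\,n_2-k_2,\,-k_3,\,-k_4,\,-k_5)$. Applying the two relations above to $F$ gives $n_1=k_1+k_3+2k_4$ and $n_2=k_2+k_3+2k_5$. Substituting these, one verifies directly the identity
\[
(n_1,n_2,E)=k_1(1,0,D_1)+k_2(0,1,D_2)+k_3(1,1,D_3)+k_4(2,0,D_4)+k_5(0,2,D_5),
\]
a non-negative combination of the proposed generators (the $E$-component matches by definition of the $k_j$, and the $n_1$- and $n_2$-components match by the two relations). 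This shows the five elements generate; in fact the coordinate map $(n_1,n_2,E)\mapsto(k_1,\dots,k_5)$ is a monoid isomorphism $\grG_{\grD_\gop(e)}\to\mN^5$.

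The main, and essentially the only, obstacle is the first step: confirming from the Cartan pairing that $\mu_1,\mu_2,\mu_3$ really lie in $\mN\grS$, which is what simultaneously makes the degree-two generators admissible and yields the two linear relations cutting out $\mZ\grS\subset\mZ\grD$. Once these relations are in hand the rest is formal linear algebra over $\mZ$. One subtlety worth flagging is that the relations force $(n_1,n_2)$ to be determined by $E$ on the semigroup, which is precisely what guarantees that the coordinate map is injective, hence a genuine isomorphism rather than merely a surjection.
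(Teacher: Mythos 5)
For $p\geq 5$, that is when $D_1,\ldots,D_5$ really are five pairwise distinct colors, your proof is correct, and it is a dual reorganization of the same computation the paper performs. The paper projects $\grG_{\grD_\gop(e)}$ onto $\mN\grS$ via $(n_1,n_2,E)\mapsto n_1D_1+n_2D_2-E$, identifies the image with $\{a_1\sigma_1+a_2\sigma_2+a_3\sigma_3\in\mN\grS : a_1\geq a_2+a_3\}$ (the inequality being exactly non-positivity of the $D_3$-coefficient), reads off its three generators $\sigma_1$, $\sigma_1+\sigma_3$, $\sigma_1+\sigma_2$, and lifts them through the minimal-triple mechanism of Remark~\ref{oss:semigruppi}. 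You instead stay inside $\mN^2\times\mN\grD$ and replace the projection by the two linear functionals cutting out $\mQ\grS$ inside $\mQ\grD$; this makes the lifting step automatic and yields the extra observation that $\grG_{\grD_\gop(e)}$ is free of rank $5$. Both arguments rest on the same input (the expressions of $\sigma_1$, $\sigma_1+\sigma_2$, $\sigma_1+\sigma_3$ in $\mZ\grD$) and the same positivity bookkeeping coming from $E\in\mN\grD$.

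There is, however, a genuine gap in the degenerate case $p=4$, which the proposition also covers and which the paper flags when introducing the colors: there $D_4=D_5$, so $\grD$ has only four elements. Your coordinate system $(v_1,\ldots,v_5)$ then does not exist, and $\mZ\grS$ has corank one, not two, in $\mZ\grD$, the unique relation being $v_1+v_2+2v_3+2v_4=0$; a single relation cannot recover both $n_1$ and $n_2$ from $E$. This is not a notational quibble: the principle your proof reduces to --- that $n_1D_1+n_2D_2-E\in\mZ\grS$ together with $n_i\geq 0$ and $E\in\mN\grD$ already forces $(n_1,n_2,E)$ into the semigroup --- becomes false for $p=4$. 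Indeed, with $D_5=D_4$ the paper's expressions give $\sigma_2=-D_1+D_2+D_3-D_4$ and $\sigma_3=D_1-D_2+D_3-D_4$, and the triple $(4,0,2D_2+D_4)$ satisfies $4D_1-(2D_2+D_4)=\sigma_1-\sigma_2+2\sigma_3\in\mZ\grS$ and all positivity constraints, yet $\sigma_1-\sigma_2+2\sigma_3\notin\mN\grS$, so this triple lies neither in $\grG_{\grD_\gop(e)}$ nor in the semigroup generated by your five elements (the latter is contained in $\grG_{\grD_\gop(e)}$, since the five elements belong to it and it is closed under addition). To handle $p=4$ one must use the full hypothesis $F=n_1D_1+n_2D_2-E\in\mN\grS$, i.e.\ $a_1,a_2,a_3\geq 0$: then $E\in\mN\grD$ gives $a_1-a_2-a_3=[E]_{D_3}\geq 0$, and $(n_1,n_2,E)$ decomposes with multiplicities $[E]_{D_1}$, $[E]_{D_2}$, $a_1-a_2-a_3$, $a_3$, $a_2$ on the five generators --- which is in substance the paper's argument. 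So your proof stands for $p\geq 5$ but needs this separate treatment, or the paper's uniform one, at $p=4$.
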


\begin{remark}	\label{oss:semigruppi}
Notice that if $D_{\gop_1}$ and $D_{\gop_2}$ are two distinct elements of $\Delta$ in order to compute generators for $\grG_{\grD_\gop(e)}$ it is actually enough to compute generators for the semigroup
\[
\grG^\grS_{\grD_\gop(e)} = \big\{\gamma\in\mN\Sigma\, :\, \supp(\gamma^+)\subset\{D_{\gop_1},D_{\gop_2}\}\big\},
\]
which is the image of the homomorphism $\grG_{\grD_\gop(e)} \lra \mN \grS$ defined by $(n_1, n_2, E) \longmapsto n_1 D_{\gop_1}+n_2 D_{\gop_2} -E$. For every generator $\grg \in \grG^\grS_{\grD_\gop(e)}$ there exists a unique minimal triple mapping to $\grg$, and $\grG_{\grD_\gop(e)}$ is generated by such triples together with $(1,0,D_{\gop_1})$ and $(0,1,D_{\gop_2})$.
\end{remark}

\begin{proof}[Proof of Proposition~\ref{prop:case1.4}]
Let us show that $\grG^\grS_{\grD_\gop(e)}$ is generated by
\[D_1+D_2-D_3=\sigma_1,\ 2D_1-D_4=\sigma_1+\sigma_3,\ 2D_2-D_5=\sigma_1+\sigma_2.\]
Indeed, these are generators of the semigroup 
\[\{a_1\sigma_1+a_2\sigma_2+a_3\sigma_3\in\mN\Sigma\, :\, a_1\geq a_2+a_3\}\]
and the condition $a_1\geq a_2+a_3$ is just equivalent to requiring the non-positivity of the coefficient of $D_3$ in
$a_1\sigma_1+a_2\sigma_2+a_3\sigma_3$ (written as an element of $\mZ\Delta$), which is equal to $-a_1+a_2+a_3$. 
\end{proof}

\subsection{Cases 1.6 and 1.7}

We consider the case 1.6, the other one is analogous. 

\subsubsection{}
We assume first $r+s < q-1$, the case $r+s = q-1$ will be treated below, separately. 

For $i =1, \ldots, r$, we denote $\grs^1_{2i-1} = \gra_{p-i}$ and $\grs^1_{2i} = \gra'_{i}$. Similarly, for $i=1, \ldots, s$, we denote $\grs^2_{2i-1} = \gra_{i}$ and $\grs^2_{2i} = \gra'_{q-i}$. Finally, we denote $\tau = \gra'_{r+1} + \ldots + \gra'_{q-s-1}$. Then
\[
	\grS = \{\grs^1_1, \ldots, \grs^1_{2r}, \grs^2_1, \ldots, \grs^2_{2s}, \tau \}.
\]

For the set of colors we introduce the following notation. For all $h \leq  2r+2$, set
$$
D^1_h = \left\{
\begin{array}{ll}
	D_{\gra_{p-i}}^- & \text{if } h = 2i-1,\text{ for }  i \leq r \\
	D_{\gra_{p-i}}^+ & \text{if } h = 2i,\text{ for } i \leq r \\
	D_{\gra'_{r}}^- & \text{if } h = 2r+1 \\
	D_{\gra_{p-r-1}} & \text{if } h = 2r+2
\end{array}\right..
$$
For all $h\leq 2s+2$, set
$$
D^2_h = \left\{
\begin{array}{ll}
	D_{\gra_{i}}^- & \text{if } h = 2i-1,\text{ for }  i  \leq s \\
	D_{\gra_{i}}^+ & \text{if } h = 2i,\text{ for }  i \leq s \\
	D_{\gra'_{q-s}}^- & \text{if } h = 2s+1 \\
	D_{\gra_{s+1}} & \text{if } h = 2s+2
\end{array}\right.
$$
Notice that if $p = r+s+1$ then $D^1_{2r+2} = D^2_{2s+2}$.

We also set
$$
D^1_{2r+3} = \left\{
\begin{array}{ll}
	D_{\gra'_{s+1}}  & \text{if } r+s < q-2\\
	D_{\tau}^+  & \text{if } r+s = q-2
\end{array}\right.,
\quad 
D^2_{2s+3} = \left\{
\begin{array}{ll}
	D_{\gra'_{q-s-1}}  & \text{if } r+s < q-2	\\
	D_{\tau}^-  & \text{if } r+s = q-2
\end{array}\right.
$$
(if $r+s = q-2$, the spherical root $\tau$ is equal to a simple root, $\tau=\alpha'_{r+1}=\alpha'_{q-s-1}$, we assume $c(D_{\tau}^+,\gra'_{r})=c(D_{\tau}^-,\gra'_{q-s})=-1$).

Therefore
$$
	\grD = \{D^1_1, \ldots, D^1_{2r+3}, \ D^2_1, \ldots, D^2_{2s+3} \}.
$$

Let us suppose $r,s>0$. We have $D_{\gop_1} = D^1_2$ and $D_{\gop_2} = D^2_2$. As explained at the end of Section~\ref{ss:General reductions}, $X$ is a parabolic induction of a quotient of a localization of a comodel wonderful variety of cotype $\sfA$, therefore by \cite[Proposition~3.2]{BGM} every covering difference $\grg \in \mN \grS$ satisfies $\height(\grg^+) = 2$. In particular every element $D \in \grD$ is minuscule, therefore Theorem~\ref{teo:normalita} implies that $\ol{Ke}$ is normal.

For notational purposes, set $r_1 = r$ and $r_2 = s$. For $k=1,2$ and $h \leq 2r_k+2$, we denote
$$
\tilde D^k_h = \left\{
\begin{array}{ll}
	D^k_h & \text{if } h < 2r_k+1 \\
	D^k_h + D^k_{h+1} & \text{if } h = 2r_k+1, 2r_k+2
\end{array}\right.
$$

\begin{proposition}
The semigroup $\grG_{\grD_\gop(e)}$ is generated by the elements $(i,0,\tilde D^1_{2i})$ for $i \leq r_1+1$, $(0,j,\tilde D^2_{2j})$ for $j \leq r_2+1$ and $(i,j,\tilde D^1_{2i-1} + \tilde D^2_{2j-1})$ for $i \leq r_1+1$, $j \leq r_2+1$.
\end{proposition}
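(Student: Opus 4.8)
The plan is to follow the pattern of the proof of Proposition~\ref{prop:case1.4} and exploit Remark~\ref{oss:semigruppi}. Since $D_{\gop_1}=D^1_2$ and $D_{\gop_2}=D^2_2$ are distinct colors, it is enough to produce generators of $\grG^\grS_{\grD_\gop(e)}=\{\grg\in\mN\grS:\supp(\grg^+)\subset\{D^1_2,D^2_2\}\}$; each such generator lifts to a unique minimal triple, and adjoining $(1,0,D^1_2)$ and $(0,1,D^2_2)$ then yields generators of $\grG_{\grD_\gop(e)}$. The first step is to record, from the spherical system of Section~\ref{s:1}, the expression of the spherical roots in $\mZ\grD$. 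The colors split into two combs $D^1_1,\ldots,D^1_{2r+3}$ and $D^2_1,\ldots,D^2_{2s+3}$ on which $\grs^1_h$ and $\grs^2_h$ act with the interior pattern $\grs^k_h=-D^k_{h-1}+D^k_h+D^k_{h+1}-D^k_{h+2}$ (with $D^k_0=0$, so that $\grs^k_1=D^k_1+D^k_2-D^k_3$), while $\tau$ is the single spherical root coupling the two combs. Writing $\grg=\sum_h a^1_h\grs^1_h+\sum_h a^2_h\grs^2_h+b\tau$ with $a^k_h,b\in\mN$ and computing the coefficient of each color, the membership $\grg\in\grG^\grS_{\grD_\gop(e)}$ becomes a system of sign conditions (non-positivity away from $D^1_2$ and $D^2_2$) on the $a^k_h$ and $b$.

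I would then check that the listed triples lie in $\grG_{\grD_\gop(e)}$ by exhibiting the corresponding non-negative combinations. For a single comb the telescoping identity $\sum_{h=1}^{2\ell}\grs^1_h=D^1_2+D^1_{2\ell}-D^1_{2\ell+2}$ holds as long as $D^1_{2\ell+2}$ is an interior color, and summing it over $\ell=1,\ldots,i-1$ gives $iD^1_2-D^1_{2i}$, the image of $(i,0,\tilde D^1_{2i})$ with $\tilde D^1_{2i}=D^1_{2i}$, for $i\leq r$. The endpoint $i=r+1$, where $\tilde D^1_{2r+2}=D^1_{2r+2}+D^1_{2r+3}$, sits at the inner end of the comb, where the special colors $D^1_{2r+1},D^1_{2r+2},D^1_{2r+3}$ and the root $\tau$ intervene and must be handled by hand; the side-$2$ generators $(0,j,\tilde D^2_{2j})$ are symmetric. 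For the mixed generators $(i,j,\tilde D^1_{2i-1}+\tilde D^2_{2j-1})$ one threads a sum of spherical roots out of one comb, through $\tau$, and down the other, so that the tails left along each comb cancel and the positive part is supported exactly on $\{D^1_2,D^2_2\}$, realizing the image $iD^1_2+jD^2_2-\tilde D^1_{2i-1}-\tilde D^2_{2j-1}$. Minimality of each triple is automatic, since the negative colors are forced once the positive part is constrained to $\{D^1_2,D^2_2\}$.

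It then remains to prove that these elements generate $\grG^\grS_{\grD_\gop(e)}$, for which I would argue by induction on $\sum_h a^1_h+\sum_h a^2_h+b$. The sign conditions form a telescoping system along each comb; reading them from the free ends inward shows that any nonzero $\grg\in\grG^\grS_{\grD_\gop(e)}$ must contain one of the listed generators as a summand --- a single-comb generator when only one family of coefficients is active and $b=0$, and a threaded mixed generator when both combs or $\tau$ are active. Subtracting that generator keeps the remainder in $\grG^\grS_{\grD_\gop(e)}$ with strictly smaller total, and the induction closes.

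The crux is this completeness step. Because the two combs are glued through $\tau$ --- and, in the boundary case recorded before the statement, additionally share the color $D^1_{2r+2}=D^2_{2s+2}$ --- the cone cut out by the sign conditions does not factor as the product of the two single-comb semigroups, and one must control precisely how a unit of $b$ and the colors at the inner ends of the two combs ($D^1_{2r+1},D^1_{2r+2},D^1_{2r+3}$ and $D^2_{2s+1},D^2_{2s+2},D^2_{2s+3}$) redistribute positivity between the two sides. Checking that the $\tilde D$-modified negative colors already account for the endpoints $i=r+1$ and $j=s+1$, so that no generators beyond the stated $(r+1)+(s+1)+(r+1)(s+1)$ are needed, is where the bookkeeping is most delicate.
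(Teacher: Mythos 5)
Your reduction and your first step coincide with the paper's: you invoke Remark~\ref{oss:semigruppi} to pass to $\grG^\grS_{\grD_\gop(e)}=\{\grg\in\mN\grS:\supp(\grg^+)\subset\{D^1_2,D^2_2\}\}$, and your telescoping identities are exactly the paper's computation that the listed triples map to $\grg^k_i=iD^k_2-\tilde D^k_{2i}$ and $\grg_{i,j}=iD^1_2+jD^2_2-\tilde D^1_{2i-1}-\tilde D^2_{2j-1}$. The problem is the completeness step, which is the entire content of the proposition and which your proposal only asserts. Your inductive scheme (``some listed generator is a summand of $\grg$; subtract it and recurse'') needs two things you never supply: a rule for which generator to subtract, and a proof that the difference stays in $\grG^\grS_{\grD_\gop(e)}$. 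Preserving the sign conditions on the colors is the easy half (subtracting $\grg_{i,j}$ raises $\tilde d^1_{2i-1}$ and $\tilde d^2_{2j-1}$ by $1$, and these were strictly negative); the hard half is that the difference must still lie in $\mN\grS$. Concretely, subtracting $\grg_{i,j}$ requires $a^1_{2u-1}\geq i-u$ and $a^1_{2u}\geq i-u$ for $u<i$, $a^1_{2u}\geq 1$ for $i\leq u\leq r_1$, the analogous bounds on the other comb, and $b\geq 1$; extracting these from $\supp(\grg^+)\subset\{D^1_2,D^2_2\}$ forces you into a chain of inequalities using the even-indexed conditions $\tilde d^1_{2u}\leq 0$ as well as the odd ones. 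This is precisely the step you yourself flag as ``where the bookkeeping is most delicate'' and then leave undone, so as written there is a genuine gap.

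The paper closes that gap with a device absent from your sketch: a transportation (coupling) matrix, which replaces the induction by a one-shot explicit decomposition. From the telescoping relations one gets $\sum_{i=1}^{r_1+1}\tilde d^1_{2i-1}=-b=\sum_{j=1}^{r_2+1}\tilde d^2_{2j-1}$, hence there exist non-positive integers $c_{i,j}$ with prescribed row sums $\sum_j c_{i,j}=\tilde d^1_{2i-1}$ and column sums $\sum_i c_{i,j}=\tilde d^2_{2j-1}$; the paper even constructs them explicitly, $-c_{i,j}$ being the cardinality of the overlap of two integer intervals built from the partial sums $n^k_i=-\sum_{u\leq i}\tilde d^k_{2u-1}$. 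It then verifies, coefficient by coefficient against each $\grs^k_h$ and against $\tau$, the identity
\[
\grg=\sum_{i=2}^{r_1+1}(-\tilde d^1_{2i})\,\grg^1_i+\sum_{j=2}^{r_2+1}(-\tilde d^2_{2j})\,\grg^2_j+\sum_{i,j}(-c_{i,j})\,\grg_{i,j}.
\]
This matrix is exactly the mechanism that controls what you call ``how a unit of $b$ redistributes between the two combs'': each unit of $\tau$ is assigned to one mixed generator $\grg_{i,j}$. If you insist on your inductive route, choosing $i$ and $j$ minimal with $\tilde d^1_{2i-1}<0$ and $\tilde d^2_{2j-1}<0$ seems workable, but you would still have to prove the coefficient bounds listed above; the transportation argument is both shorter and what the paper actually does.
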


\begin{proof}
As noticed in Remark \ref{oss:semigruppi}, it is enough to compute generators for the semigroup 
\[
\grG^\grS_{\grD_\gop(e)} = \big\{\gamma\in\mN\Sigma\, :\, \supp(\gamma^+)\subset\{D^1_2,D^2_2\}\big\}.
\]

Notice that, for $k=1,2$ and $i = 2, \ldots, r_k+1$, it holds
$$\grs^k_1+\ldots+\grs^k_{2i-2}= D^k_2 + \tilde D^k_{2i-2} - \tilde D^k_{2i}.$$
Therefore,
\[
\gamma^k_i := \sum_{u=1}^{i-1}(i-u)(\sigma^k_{2u-1}+\sigma^k_{2u})
\]
is equal to $i D^k_2-\tilde D^k_{2i}$.

Notice also that, for $i \leq r_1+1$ and $j \leq r_2+1$, 
$$
\sum_{u=i}^{r_1} \grs^1_{2u} + \sum_{v=j}^{r_2} \grs^2_{2v} + \tau =  
\tilde D^1_{2i} + \tilde D^2_{2j} - \tilde D^1_{2i-1} - \tilde D^2_{2j-1}.
$$
Therefore,
\[
\gamma_{i,j} :=
\sum_{u=1}^{i-1}(i-u)(\sigma^1_{2u-1}+\sigma^1_{2u})+\sum_{u=i}^{r_1}\sigma^1_{2u}+
\sum_{v=1}^{j-1}(j-v)(\sigma^2_{2v-1}+\sigma^2_{2v})+\sum_{v=j}^{r_2}\sigma^2_{2v}+\tau
\]
is equal  to $i D^1_{2} + j D^2_{2} - \tilde D^1_{2i-1} - \tilde D^2_{2j-1}$.

We claim that the semigroup $\grG^\grS_{\grD_\gop(e)}$ is generated by the elements of the form $\gamma^k_i$, for $k=1,2$ and $2\leq i\leq r_k+1$, and $\gamma_{i,j}$, for $1 \leq i \leq r_1+1$ and $1\leq j \leq r_2+1$.

Let us write $\grg = \sum_{h=1}^{2r_1} a^1_h \grs^1_h + \sum_{h=1}^{2r_2} a^2_h \grs^2_h + b \tau$ as an element of $\mN\Sigma$. Let us denote by $d^k_h$ the coefficient of $D^k_h$ in $\gamma$ (written as element of $\mZ\Delta$), for $h\neq 2r_k+2$ there is no ambiguity. We have
\[d^k_1=a^k_1-a^k_2,\quad d^k_h=-a^k_{h-2}+a^k_{h-1}+a^k_h-a^k_{h+1}\ (3\leq h\leq 2r_k-1),\]
\[d^k_{2r_k}=-a^k_{2r_k-2}+a^k_{2r_k-1}+a^k_{2r_k},\quad d^k_{2r_k+1}=-a^k_{2r_k-1}+a^k_{2r_k}-b,\quad d^k_{2r_k+3}=-a^k_{2r_k}+b.\]

Furthermore, every spherical root lies in the lattice generated by $\tilde D^k_1, \ldots, \tilde D^k_{2r_k+2}$, with $k\in \{1,2\}$. 
Denoting by $\tilde d^k_h$ the coefficient of $\tilde D^k_h$ in $\gamma$, we have $\tilde d^k_{2r_k+1}=d^k_{2r_k+1}$ and $\tilde d^k_{2r_k+2}=d^k_{2r_k+3}$, with $k\in \{1,2\}$.

Assume $\supp(\gamma^+)\subset\{D^1_2,D^2_2\}$, then the coefficients $\tilde d^k_h$ are non-positive for $h\neq2$. Let us write $\gamma$ as a combination with non-negative integer coefficients of the $\gamma^k_i$ ($k=1,2$ and $2\leq i\leq r_k+1$) and the $\gamma_{i,j}$ ($1\leq i \leq r_1+1$ and $1\leq j \leq r_2+1$).

We have 
$$
	\sum_{i=1}^{r_1+1} \tilde d^1_{2i-1} = - b = \sum_{j=1}^{r_2+1} \tilde d^2_{2j-1}.
$$

Therefore, there exist integers $c_{i,j}\leq0$ (for $i \leq r_1+1$ and $j \leq r_2+1$) such that $\sum_j c_{i,j}=\tilde d^1_{2i-1}$ and $\sum_i c_{i,j}=\tilde d^2_{2j-1}$. Indeed, for $k=1,2$ and $i \leq r_k$, we can set $n_i^k = -\sum_{u=1}^i \tilde d^k_{2u-1}$ and take 
$$
	- c_{i,j} = \mathrm{card} \{ n \in \mN \; | \; n_{i-1}^1 < n \leq n_i^1 \text{ and } n_{j-1}^2 < n \leq n_j^2 \}.
$$

We claim that $\gamma$ is equal to
\[
\sum_{i=2}^{r_1+1}-\tilde d^1_{2i}\gamma^1_i+\sum_{j=2}^{r_2+1}-\tilde d^2_{2j}\gamma^2_j+\sum_{i=1}^{r_1+1}\sum_{j=1}^{r_2+1}-c_{i,j}\gamma_{i,j}.
\]

Indeed, the coefficient of $\sigma^1_{2i-1}$ in the above expression is equal to
\begin{eqnarray*}
&&\sum_{u=i+1}^{r_1+1}-\tilde d^1_{2u}(u-i)+\sum_{u=i+1}^{r_1+1}\sum_{v=1}^{r_2+1}-c_{u,v}(u-i)\\
&=&\sum_{u=i+1}^{r_1+1}-(\tilde d^1_{2u}+\tilde d^1_{2u-1})(u-i)\\
&=&a^1_{2i-1}.
\end{eqnarray*}
The coefficient of $\sigma^1_{2i}$ is equal to
\begin{eqnarray*}
&&\sum_{u=i+1}^{r_1+1}-\tilde d^1_{2u}(u-i)+\sum_{u=1}^i\sum_{v=1}^{r_2+1}-c_{u,v}+\sum_{u=i+1}^{r_1+1}\sum_{v=1}^{r_2+1}-c_{u,v}(u-i)\\
&=&\sum_{u=1}^i-\tilde d^1_{2u-1}+\sum_{u=i+1}^{r_1+1}-(\tilde d^1_{2u}+\tilde d^1_{2u-1})(u-i)\\
&=&a^1_{2i}.
\end{eqnarray*}
Analogously, the same holds for $\sigma^2_h$, for any $h$. It remains the coefficient of $\tau$, which is equal to
\[\sum_{i=1}^{r_1+1}\sum_{j=1}^{r_2+1}-c_{i,j}=b.	\qedhere\]
\end{proof}

The case $r=s=0$ is a parabolic induction of a wonderful symmetric variety. 
We are left with the case $r>0$ and $s=0$ (the other one, $r=0$ and $s>0$, is analogous).
Let us keep the same notation as above, notice that there exists no $D^2_1$ and we have
$$
\grD = \{D^1_1, \ldots, D^1_{2r+3}\}\cup\{ D^2_2,  D^2_3 \}.
$$
In this case, $D_{\gop_1} = D^1_2$ and $D_{\gop_2} = \tilde D^2_2 = D^2_2+D_3^2$. Both are minuscule, and $\ol{Ke}$ is normal.

The description of the $\grG_{\grD_\gop(e)}$ given in the above proposition remains valid. The proof is slightly simpler: every spherical root lies in the lattice generated by $\tilde D^1_1, \ldots, \tilde D^1_{2r+2}$ and $\tilde D^2_1, \tilde D^2_2$ which are still linearly independent, denoting by $\tilde d^k_h$ the coefficient of $\tilde D^k_h$ in $\gamma$, the semigroup
\[\big\{\gamma\in\mN\Sigma\, :\, \tilde d^1_h\leq0\ \forall\ h\neq2\big\}\] 
is generated by the elements of the form $\gamma^1_i$, for $2\leq i\leq r+1$, and $\gamma_{i,1}$, for $1 \leq i \leq r+1$. 

\subsubsection{}
We now consider the case $r+s=q-1$.

Let us keep the same notation as above, as far as possible. Indeed, there exists no $\tau$, so we have 
\[
	\grS = \{\grs^1_1, \ldots, \grs^1_{2r}, \grs^2_1, \ldots, \grs^2_{2s} \}
\]
and
\[
	\grD = \{D^1_1, \ldots, D^1_{2r+2}, \ D^2_1, \ldots, D^2_{2s+2} \}.
\]

Let us suppose $r,s>0$. We have $D_{\gop_1} = D^1_2$ and $D_{\gop_2} = D^2_2$, which as in previous case are minuscule. Therefore, $\ol{Ke}$ is normal.

For convenience we also define $D^1_{2r+3} = D^2_{2s+1}$ and $D^2_{2s+3} = D^1_{2r+1}$. 
As in previous case, set $r_1 = r$ and $r_2 = s$. If $k=1,2$ and $h \leq 2r_k+2$, denote
$$
\tilde D^k_h = \left\{
\begin{array}{ll}
	D^k_h & \text{if } h < 2r_k+1 \\
	D^k_h + D^k_{h+1} & \text{if } h = 2r_k+1, 2r_k+2
\end{array}\right.
$$
Notice that if $p=q+1$ then $D^1_{2r_1+2}=D^2_{2r_2+2}$, thus $\tilde D^1_{2r_1+2}=\tilde D^2_{2r_2+1}$ and $\tilde D^2_{2r_2+2}=\tilde D^1_{2r_1+1}$.

\begin{proposition}
The semigroup $\grG_{\grD_\gop(e)}$ is generated by the elements  $(i,0,\tilde D^1_{2i})$ for $i \leq r_1+1$, $(0,j,\tilde D^2_{2j})$ for $j \leq r_2+1$ and $(i,j,\tilde D^1_{2i-1} + \tilde D^2_{2j-1})$ for $i \leq r_1+1$, $j \leq r_2+1$ with $i+j < r_1+r_2+2$.
\end{proposition}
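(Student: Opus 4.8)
The plan is to follow the template of the preceding proposition (the case $r+s<q-1$), the only genuinely new phenomena being the disappearance of the spherical root $\tau$ and the two boundary identifications $D^1_{2r+3}=D^2_{2s+1}$, $D^2_{2s+3}=D^1_{2r+1}$. As in Remark~\ref{oss:semigruppi}, since $D_{\gop_1}=D^1_2$ and $D_{\gop_2}=D^2_2$ are distinct colors it suffices to exhibit generators of
\[
\grG^\grS_{\grD_\gop(e)}=\big\{\gamma\in\mN\Sigma\,:\,\supp(\gamma^+)\subset\{D^1_2,D^2_2\}\big\}
\]
together with $(1,0,D^1_2)$ and $(0,1,D^2_2)$, and then to read off the corresponding triples. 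I would first record the candidate generators by the same formulas as before but with the $\tau$-summand deleted, namely $\gamma^k_i=\sum_{u=1}^{i-1}(i-u)(\sigma^k_{2u-1}+\sigma^k_{2u})$ and $\gamma_{i,j}=\sum_{u=1}^{i-1}(i-u)(\sigma^1_{2u-1}+\sigma^1_{2u})+\sum_{u=i}^{r_1}\sigma^1_{2u}+\sum_{v=1}^{j-1}(j-v)(\sigma^2_{2v-1}+\sigma^2_{2v})+\sum_{v=j}^{r_2}\sigma^2_{2v}$, and verify from the Cartan pairing of the now directly glued spherical system that $\gamma^k_i=iD^k_2-\tilde D^k_{2i}$ and $\gamma_{i,j}=iD^1_2+jD^2_2-\tilde D^1_{2i-1}-\tilde D^2_{2j-1}$. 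Here the boundary terms $\tilde D^1_{2r_1+2}=D^1_{2r_1+2}+D^2_{2s+1}$ and $\tilde D^2_{2r_2+2}=D^2_{2r_2+2}+D^1_{2r+1}$ already encode the two identifications.

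The key new observation, which explains why the pair $(r_1+1,r_2+1)$ is omitted, is the relation
\[
\gamma_{r_1+1,r_2+1}=\gamma^1_{r_1+1}+\gamma^2_{r_2+1}.
\]
This is immediate from the definitions: at $i=r_1+1$ and $j=r_2+1$ the two single-root sums $\sum_{u=i}^{r_1}\sigma^1_{2u}$ and $\sum_{v=j}^{r_2}\sigma^2_{2v}$ are empty, and there is no $\tau$, so the defining expression of $\gamma_{r_1+1,r_2+1}$ splits as the sum of those of $\gamma^1_{r_1+1}$ and $\gamma^2_{r_2+1}$. Equivalently, in $\mZ\Delta$ one has the lattice identity $\tilde D^1_{2r_1+1}+\tilde D^2_{2r_2+1}=\tilde D^1_{2r_1+2}+\tilde D^2_{2r_2+2}$, which follows at once from $D^1_{2r+3}=D^2_{2s+1}$ and $D^2_{2s+3}=D^1_{2r+1}$. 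Thus $\gamma_{r_1+1,r_2+1}$ is redundant and may be dropped from the generating set.

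It then remains to show that every $\gamma\in\grG^\grS_{\grD_\gop(e)}$ is a non-negative integer combination of the listed generators. Writing $\gamma=\sum_h a^1_h\sigma^1_h+\sum_h a^2_h\sigma^2_h$ and matching coefficients of colors, the even-indexed colors on each branch fix the coefficients of the $\gamma^k_i$, while the odd-indexed colors turn the coefficients $x_{i,j}$ of the $\gamma_{i,j}$ into a transportation-type system with prescribed row and column sums $-\tilde d^1_{2i-1}$ and $-\tilde d^2_{2j-1}$. Unlike the case with $\tau$, these margins need not balance branch by branch: one computes $\sum_i\tilde d^1_{2i-1}=-a^2_{2r_2}$ and $\sum_j\tilde d^2_{2j-1}=-a^1_{2r_1}$, which differ in general. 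The compensation is supplied by the two extreme generators: through the identifications, $\gamma^1_{r_1+1}$ also meets the branch-$2$ boundary color $D^2_{2s+1}$ and $\gamma^2_{r_2+1}$ meets $D^1_{2r+1}$, so their coefficients couple the two margins. I would produce the solution explicitly and reroute, via the displayed relation, any weight the bookkeeping places on the excluded $\gamma_{r_1+1,r_2+1}$ onto $\gamma^1_{r_1+1}+\gamma^2_{r_2+1}$, which keeps all coefficients non-negative.

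The main obstacle is exactly the feasibility of this non-negative integer system: one must check that the cross-branch coupling absorbs the margin imbalance $-a^2_{2r_2}$ versus $-a^1_{2r_1}$. I expect to settle it by a Hall/transportation argument, showing that the interval of admissible values for the free parameters is non-empty and is cut out precisely by the defining inequalities $\tilde d^k_h\leq 0$ ($h\neq 2$) of $\grG^\grS_{\grD_\gop(e)}$; the base case $r_1=r_2=1$, where a single free parameter ranges over a non-empty integer interval whose endpoints are governed by those very inequalities, already exhibits the mechanism. An alternative route is induction on $r_1+r_2$, peeling off one extreme generator $\gamma^1_{r_1+1}$ or $\gamma^2_{r_2+1}$ and reducing to a smaller instance.
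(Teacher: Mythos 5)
Your setup coincides with the paper's: the candidate generators $\gamma^k_i=iD^k_2-\tilde D^k_{2i}$ and $\gamma_{i,j}=iD^1_2+jD^2_2-\tilde D^1_{2i-1}-\tilde D^2_{2j-1}$ are exactly those used there, your margin computations $\sum_i d^1_{2i-1}=-a^2_{2r_2}$ and $\sum_j d^2_{2j-1}=-a^1_{2r_1}$ agree with the paper's, and your observation that $\gamma_{r_1+1,r_2+1}=\gamma^1_{r_1+1}+\gamma^2_{r_2+1}$ (equivalently $\tilde D^1_{2r_1+1}+\tilde D^2_{2r_2+1}=\tilde D^1_{2r_1+2}+\tilde D^2_{2r_2+2}$) is correct and is a clean way of explaining why the corner pair $(r_1+1,r_2+1)$ is omitted --- the paper leaves that relation implicit. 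You have also correctly identified that the coefficients of $\gamma^1_{r_1+1}$ and $\gamma^2_{r_2+1}$ are the variables coupling the two branches.

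Nevertheless the proof is incomplete at precisely the decisive step: the existence of non-positive integers solving the coupled system (row sums, column sums, and the two boundary conditions involving the extreme generators) is only ``expected'' to follow from a Hall/transportation argument, and neither that feasibility nor the final verification that the resulting combination reproduces $\gamma$ is carried out. This is where all the work lies. The paper settles it by an explicit construction: assuming without loss of generality $a^1_{2r_1}\leq a^2_{2r_2}$, it takes $c^1=a^1_{2r_1}-a^2_{2r_2}-b$ and $c^2=-b$ with $b=\min\bigl(a^1_{2r_1-1},\,a^2_{2r_2-1}-a^2_{2r_2}+a^1_{2r_1}\bigr)$, builds the $c_{i,j}$ by the interval-counting recipe of the previous proposition, and then checks coefficient by coefficient (on each $\sigma^k_h$) that $\gamma$ equals the proposed combination. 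A second, more technical flaw in your bookkeeping: in this case the elements $\tilde D^k_h$ are linearly \emph{dependent} --- your own lattice identity says exactly this --- so ``the coefficient $\tilde d^k_h$ of $\tilde D^k_h$ in $\gamma$'' is not well defined, and neither are your inequalities $\tilde d^k_h\leq 0$ cutting out $\grG^\grS_{\grD_\gop(e)}$. The conditions defining $\supp(\gamma^+)\subset\{D^1_2,D^2_2\}$, and the whole linear algebra, must be phrased in terms of the honest colors $D^k_h$ (coefficients $d^k_h$, $h<2r_k+2$), which is why the paper switches to that notation in this subsection.
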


\begin{proof}
We follow the line of the proof of the previous proposition. As in that case, it is enough to compute generators for $\grG^\grS_{\grD_\gop(e)}$.

For $k=1,2$ and $i = 2, \ldots, r_k+1$, we have
\[
\gamma^k_i := \sum_{u=1}^{i-1}(i-u)(\sigma^k_{2u-1}+\sigma^k_{2u}) = i D^k_2-\tilde D^k_{2i}.
\]

For $i \leq r_1+1$ and $j \leq r_2+1$ with $i+j<r_1+r_2+2$, we have
\begin{eqnarray*}
\gamma_{i,j} & := &
\sum_{u=1}^{i-1}(i-u)(\sigma^1_{2u-1}+\sigma^1_{2u})+\sum_{u=i}^{r_1}\sigma^1_{2u}+
\sum_{v=1}^{j-1}(j-v)(\sigma^2_{2v-1}+\sigma^2_{2v})+\sum_{v=j}^{r_2}\sigma^2_{2v} \\
& = & i D^1_{2} + j D^2_{2} - \tilde D^1_{2i-1} - \tilde D^2_{2j-1}.
\end{eqnarray*}

Let us prove that $\grG^\grS_{\grD_\gop(e)}$ is generated by the elements of the form $\gamma^k_i$, for $k=1,2$ and $2\leq i\leq r_k+1$, and $\gamma_{i,j}$, for $1 \leq i \leq r_1+1$ and $1\leq j \leq r_2+1$ with $i+j<r_1+r_2+2$. 

Let us write $\grg = \sum_{h=1}^{2r_1} a^1_h \grs^1_h + \sum_{h=1}^{2r_2} a^2_h \grs^2_h\in\mN\Sigma$ and denote by $d^k_h$ the coefficient of $D^k_h$ in $\gamma$, for $h< 2r_k+2$. We have
\[d^k_1=a^k_1-a^k_2,\quad d^k_h=-a^k_{h-2}+a^k_{h-1}+a^k_h-a^k_{h+1}\ (3\leq h\leq 2r_k-1),\]
\[d^k_{2r_k}=-a^k_{2r_k-2}+a^k_{2r_k-1}+a^k_{2r_k},\]
\[d^1_{2r_1+1}=-a^1_{2r_1-1}+a^1_{2r_1}-a^2_{2r_2},\quad d^2_{2r_2+1}=-a^2_{2r_2-1}+a^2_{2r_2}-a^1_{2r_1}.\]

Assume $\supp(\gamma^+)\subset\{D^1_2,D^2_2\}$, then the coefficients $d^k_h$ are non-positive for $h\neq2$.

We have 
$$
\sum_{i=1}^{r_k} d^k_{2i-1} = a^k_{2r_k-1} - a^k_{2r_k},\quad
\sum_{i=1}^{r_1+1} d^1_{2i-1} = - a^2_{2r_2},\quad
\sum_{j=1}^{r_2+1} d^2_{2j-1} = - a^1_{2r_1},
$$
and moreover
$$
d^1_{2r_1+1}+d^2_{2r_2+1}=-a^1_{2r_1-1}-a^2_{2r_2-1}.
$$
Therefore, there exist non-positive integers $c^1$, $c^2$ and $c_{i,j}$, for $i \leq r_1+1$ and $j \leq r_2+1$ with $i+j<r_1+r_2+2$, such that 
\[\sum_{j=1}^{r_2+1} c_{i,j}=d^1_{2i-1}\ \forall\ i\leq r_1,\qquad \sum_{i=1}^{r_1+1} c_{i,j}=d^2_{2j-1}\ \forall\ j\leq r_2,\]
\[c^1+\sum_{i=1}^{r_1} c_{i,r_2+1}=d^2_{2r_2+1},\qquad c^2+\sum_{j=1}^{r_2} c_{r_1+1,j}=d^1_{2r_1+1},\]
\[c^1+\sum_{j=1}^{r_2} c_{r_1+1,j}= -a^1_{2r_1-1},\quad\text{and}\quad c^2+\sum_{i=1}^{r_1} c_{i,r_2+1}= -a^2_{2r_2-1}.\]
Indeed, if we assume (without loss of generality) $a^1_{2r_1}\leq a^2_{2r_2}$, we can take 
$c^1=a^1_{2r_1}-a^2_{2r_2}-b$ and $c^2=-b$ where $b=\min(a^1_{2r_1-1},a^2_{2r_2-1}-a^2_{2r_2}+a^1_{2r_1})$. For the $c_{i,j}$'s one can do as in the proof of the previous proposition.

We claim that $\gamma$ is equal to
\[
\big(\sum_{i=2}^{r_1} -d^1_{2i} \gamma^1_i\big) -c^1 \gamma^1_{r_1+1} + \big(\sum_{j=2}^{r_2} -d^2_{2j} \gamma^2_j\big) -c^2 \gamma^2_{r_2+1} + \sum_{\substack{1\leq i\leq r_1+1\\ 1\leq j\leq r_2+1\\ i+j<r_1+r_2+2}}-c_{i,j}\gamma_{i,j}.
\]

Indeed, the coefficient of $\sigma^1_{2i-1}$ in the above expression is equal to
\begin{eqnarray*}
&&\big(\sum_{u=i+1}^{r_1} -d^1_{2u}(u-i)\big) -c^1(r_1+1-i) + \sum_{\substack{i+1\leq u\leq r_1+1\\1\leq v\leq r_2+1\\ u+v<r_1+r_2+2}}-c_{u,v}(u-i)\\
&&=\big(\sum_{u=i+1}^{r_1}-(d^1_{2u}+ d^1_{2u-1})(u-i)\big) + a^1_{2r_1-1}(r_1+1-i)\\
&&=a^1_{2i-1}.
\end{eqnarray*}
The coefficient of $\sigma^1_{2i}$ is equal to
\begin{eqnarray*}
&&\big(\sum_{u=i+1}^{r_1} -d^1_{2u}(u-i)\big) -c^1(r_1+1-i) + \big(\sum_{u=1}^i\sum_{v=1}^{r_2+1} -c_{u,v}\big) + \!\!\!\!\!\!\sum_{\substack{i+1\leq u\leq r_1+1\\1\leq v\leq r_2+1\\ u+v<r_1+r_2+2}}\!\!\!\!\!\!-c_{u,v}(u-i)\\
&&=\big(\sum_{u=1}^i -d^1_{2u-1}\big) + \big(\sum_{u=i+1}^{r_1}-(d^1_{2u}+d^1_{2u-1})(u-i)\big) + a^1_{2r_1-1}(r_1+1-i)\\
&&=a^1_{2i}.
\end{eqnarray*}
The same holds for $\sigma^2_h$, for any $h$.
\end{proof}

The case $r=s=0$ corresponds to a flag variety. 
We are left with the case $r>0$ and $s=0$ (the other one, $r=0$ and $s>0$, is analogous).
Keeping the same notation as above, there exists no $D^2_1$ and we have
$$
\grD = \{D^1_1, \ldots, D^1_{2r+2}\}\cup\{ D^2_2 \}.
$$
In this case, $D_{\gop_1} = D^1_2$ and $D_{\gop_2} = \tilde D^2_2 = D^2_2+D^1_{2r+1}$. Both are minuscule, and $\ol{Ke}$ is normal.

The description of the semigroup $\grG_{\grD_\gop(e)}$ remains the same: denoting by $d^k_h$ the coefficient of $D^k_h$ in $\gamma$, for $h\leq2r$, the semigroup
\[\big\{\gamma\in\mN\Sigma\, :\, d^1_h\leq0\ \forall\ h\neq2\big\}\] is generated by the elements of the form $\gamma^1_i$, for $2\leq i\leq r+1$, and $\gamma_{i,1}$, for $1 \leq i \leq r$.

%%%%%%%%%%%%%%%%%%%%%%%%%%%%%%%%%%%%%%%%%%%%%%%%%%%%%
%%%%%%%%%%%%%%%%%%%%%%%%%%%%%%%%%%%%%%%%%%%%%%%%%%%%%

\appendix

%%%%%%%%%%%%%%%%%%%%%%%%%%%%%%%%%%%%%%%%%%%%%%%%%%
%%%%%%%%%%%%%%%%%%%%%%%%%%%%%%%%%%%%%%%%%%%%%%%%%%
\section{List of spherical nilpotent $K$-orbits  in $\mathfrak p$\\in the classical Hermitian cases}\label{A}
%%%%%%%%%%%%%%%%%%%%%%%%%%%%%%%%%%%%%%%%%%%%%%%%%%
%%%%%%%%%%%%%%%%%%%%%%%%%%%%%%%%%%%%%%%%%%%%%%%%%%

\renewcommand{\thesubsection}{\arabic{subsection}}

Here we report the list of the spherical nilpotent
$K$-orbits in $\mathfrak p$ for all symmetric pairs $(\mathfrak
g,\mathfrak k)$ of classical Hermitian type.

Every (complex) $K$-orbit in $\mathfrak p$ is labelled with the signed partition of the corresponding real nilpotent orbit.
In each case we provide a normal triple $\{h,e,f\}$, with $e$ a representative of the orbit.

We denote by $Q$ the parabolic subgroup of $K$ whose Lie algebra is equal to
\[\mathrm{Lie}\,Q=\bigoplus_{i\geq0}\mathfrak k(i),\]
where $\mathfrak k(i)$ is the $\mathrm{ad}h$-eigenspace in $\mathfrak k$ of eigenvalue $i$. 

We describe the centralizer of $h$, denoted by $K_h$ or by $L$, which is a Levi subgroup of $Q$. Let $Q^\mathrm u$ be the unipotent radical of $Q$. Then we describe the centralizer of $e$, denoted by $K_e$. A Levi subgroup of $K_e$ is always given by $L_e$, the centralizer of $e$ in $L$. The unipotent radical of $K_e$ is explicitly described as $L_e$-submodule of $Q^\mathrm u$.

\subsection{$\mathrm{SL}(p+q)/\mathrm S(\mathrm{GL}(p)\times\mathrm{GL}(q))$}\

$K=\mathrm S(\mathrm{GL}(p)\times\mathrm{GL}(q))$, $p,q\geq2$, $\mathfrak p=V(\omega_1+\omega'_{q-1})\oplus V(\omega_{p-1}+\omega'_1)$ as $K^{\mathrm{ss}}$-module.
If $p=1$ and $q\geq2$, $\mathfrak p=V(\omega'_{q-1})\oplus V(\omega'_1)$.
If $p\geq2$ and $q=1$, $\mathfrak p=V(\omega_1)\oplus V(\omega_{p-1})$.
If $p=q=1$, $\mathfrak p=V(0)\oplus V(0)$.

Let us fix a basis $e_1,\ldots,e_p$ of $\mathbb C^{p}$ and denote by $\varphi_1,\ldots,\varphi_p$ the dual basis of $(\mathbb C^{p})^*$. Similarly, let us fix a basis $e'_1,\ldots,e'_q$ of $\mathbb C^{q}$ and denote by $\varphi'_1,\ldots,\varphi'_q$ the dual basis of $(\mathbb C^{q})^*$. Then $K=\mathrm S(\mathrm{GL}(\mathbb C^{p})\times\mathrm{GL}(\mathbb C^{q}))$ and 
\[\mathfrak p=\big(\mathbb C^{p}\otimes(\mathbb C^{q})^*\big)\oplus\big((\mathbb C^{p})^*\otimes\mathbb C^{q}\big).\]

\subsubsection*{1.1. $\mathbf{(+2^r,+1^{p-r},-1^{q-r})}$, $r\geq1$}\

\[e=\sum_{i=1}^re_i\otimes \varphi'_{q-r+i},\qquad
f=\sum_{i=1}^r\varphi_{i}\otimes e'_{q-r+i},\] 
\[h(e_i)=\left\{\begin{array}{cl}
e_i & \mbox{if $1\leq i\leq r$}\\
0 & \mbox{otherwise}
\end{array}\right.,\quad
h(e'_i)=\left\{\begin{array}{cl}
-e'_i & \mbox{if $q-r+1\leq i\leq q$}\\
0 & \mbox{otherwise}
\end{array}\right..\]
Let $Q=L\,Q^\mathrm u$ be the corresponding parabolic subgroup of $K$,
so that $L=K_h\cong\mathrm{S}(\mathrm{GL}(r)\times\mathrm{GL}(p-r)\times\mathrm{GL}(q-r)\times\mathrm{GL}(r))$.

The centralizer of $e$ is $K_e=L_eQ^\mathrm u$ where
$L_e\cong\mathrm{S}(\mathrm{GL}(r)\times\mathrm{GL}(p-r)\times\mathrm{GL}(q-r))$,
the $\mathrm{GL}(r)$ factor of $L_e$ is embedded diagonally, $A\mapsto(A,A)$, into the $\mathrm{GL}(r)\times\mathrm{GL}(r)$ factor of $L$.
For $r=p=q$, the connected component of $L_e$ is isomorphic to $\mathrm{SL}(r)$.

\subsubsection*{1.2. $\mathbf{(-2^r,+1^{p-r},-1^{q-r})}$, $r\geq1$}\

\[e=\sum_{i=1}^r\varphi_{p-r+i}\otimes e'_i,\qquad
f=\sum_{i=1}^re_{p-r+i}\otimes \varphi'_i,\] 
\[h(e_i)=\left\{\begin{array}{cl}
-e_i & \mbox{if $p-r+1\leq i\leq p$}\\
0 & \mbox{otherwise}
\end{array}\right.,\quad
h(e'_i)=\left\{\begin{array}{cl}
e'_i & \mbox{if $1\leq i\leq r$}\\
0 & \mbox{otherwise}
\end{array}\right..\]
The centralizers of $h$ and $e$ are the same as in the previous case up to switching the two factors of $K$ as well as the role of $p$ and $q$.

%Let $Q=L\,Q^\mathrm u$ be the corresponding parabolic subgroup of $K$,
%so that $L=K_h\cong\mathrm{S}(\mathrm{GL}(p-r)\times\mathrm{GL}(r)\times\mathrm{GL}(r)\times\mathrm{GL}(q-r))$.

%The centralizer of $e$ is $K_e=L_eQ^\mathrm u$ where
%$L_e\cong\mathrm{S}(\mathrm{GL}(p-r)\times\mathrm{GL}(r)\times\mathrm{GL}(q-r))$,
%the $\mathrm{GL}(r)$ factor of $L_e$ is embedded diagonally, $A\mapsto(A,A)$, into the $\mathrm{GL}(r)\times\mathrm{GL}(r)$ factor of $L$.
%For $r=p=q$, the connected component of $L_e$ is isomorphic to $\mathrm{SL}(r)$.

\subsubsection*{1.3. $\mathbf{(+2^r,-2^s,+1^{p-r-s},-1^{q-r-s})}$, $r,s\geq1$}\

\[e=\sum_{i=1}^re_i\otimes \varphi'_{q-r+i}+\sum_{i=1}^s\varphi_{p-s+i}\otimes e'_i,\]
\[h(e_i)=\left\{\begin{array}{cl}
e_i & \mbox{if $1\leq i\leq r$}\\
-e_i & \mbox{if $p-s+1\leq i\leq p$}\\
0 & \mbox{otherwise}
\end{array}\right.,\quad
h(e'_i)=\left\{\begin{array}{cl}
e'_i & \mbox{if $1\leq i\leq s$}\\
-e'_i & \mbox{if $q-r+1\leq i\leq q$}\\
0 & \mbox{otherwise}
\end{array}\right.,\]
\[f=\sum_{i=1}^r\varphi_{i}\otimes e'_{q-r+i}+\sum_{i=1}^se_{p-s+i}\otimes \varphi'_i.\] 
Let $Q=L\,Q^\mathrm u$ be the corresponding parabolic subgroup of $K$,
so that $L=K_h\cong\mathrm{S}(\mathrm{GL}(r)\times\mathrm{GL}(p-r-s)\times\mathrm{GL}(s)\times\mathrm{GL}(s)\times\mathrm{GL}(q-r-s)\times\mathrm{GL}(r))$.

The centralizer of $e$ is $K_e=L_eQ^\mathrm u$ where
$L_e\cong\mathrm{S}(\mathrm{GL}(r)\times\mathrm{GL}(p-r-s)\times\mathrm{GL}(s)\times\mathrm{GL}(q-r-s))$,
the $\mathrm{GL}(r)$ and $\mathrm{GL}(s)$ factors of $L_e$ are embedded diagonally, respectively, into the $\mathrm{GL}(r)\times\mathrm{GL}(r)$ and $\mathrm{GL}(s)\times\mathrm{GL}(s)$ factors of $L$.

\subsubsection*{1.4. $\mathbf{(+3^2,+1^{p-4})}$, $q=2$}\

\[e=e_1\otimes \varphi'_1+e_2\otimes \varphi'_2+\varphi_{p-1}\otimes e'_1+\varphi_p\otimes e'_2,\]
\[h(e_i)=\left\{\begin{array}{cl}
2e_i & \mbox{if $1\leq i\leq 2$}\\
-2e_i & \mbox{if $p-1\leq i\leq p$}\\
0 & \mbox{otherwise}
\end{array}\right.,\quad
h(e'_i)=0\ \forall\ i,\]
\[f=2(\varphi_1\otimes e'_1+\varphi_2\otimes e'_2+e_{p-1}\otimes \varphi'_1+e_p\otimes \varphi'_2).\] 
Let $Q=L\,Q^\mathrm u$ be the corresponding parabolic subgroup of $K$,
so that $L=K_h\cong\mathrm{S}(\mathrm{GL}(2)\times\mathrm{GL}(p-4)\times\mathrm{GL}(2)\times\mathrm{GL}(2))$.

The centralizer of $e$ is $K_e=L_eQ^\mathrm u$ where
$L_e\cong\mathrm{S}(\mathrm{GL}(2)\times\mathrm{GL}(p-4))$,
the $\mathrm{GL}(2)$ factor of $L_e$ is embedded diagonally, $A\mapsto(A,A,A)$, into the $\mathrm{GL}(2)\times\mathrm{GL}(2)\times\mathrm{GL}(2)$ factor of $L$.
For $p=4$, the connected component of $L_e$ is isomorphic to $\mathrm{SL}(2)$.

\subsubsection*{1.5. $\mathbf{(-3^2,-1^{q-4})}$, $p=2$}\

\[e=e_1\otimes \varphi'_{q-1}+e_2\otimes \varphi'_q+\varphi_1\otimes e'_1+\varphi_2\otimes e'_2,\]
\[h(e_i)=0\ \forall\ i,\quad
h(e'_i)=\left\{\begin{array}{cl}
2e'_i & \mbox{if $1\leq i\leq 2$}\\
-2e'_i & \mbox{if $q-1\leq i\leq q$}\\
0 & \mbox{otherwise}
\end{array}\right.,\]
\[f=2(\varphi_1\otimes e'_{q-1}+\varphi_2\otimes e'_q+e_1\otimes \varphi'_1+e_2\otimes \varphi'_2).\] 
The centralizers of $h$ and $e$ are the same as in the previous case up to switching the two factors of $K$ as well as the role of $p$ and $q$.

%Let $Q=L\,Q^\mathrm u$ be the corresponding parabolic subgroup of $K$,
%so that $L=K_h\cong\mathrm{S}(\mathrm{GL}(2)\times\mathrm{GL}(2)\times\mathrm{GL}(q-4)\times\mathrm{GL}(2))$.

%The centralizer of $e$ is $K_e=L_eQ^\mathrm u$ where
%$L_e\cong\mathrm{S}(\mathrm{GL}(2)\times\mathrm{GL}(q-4))$,
%the $\mathrm{GL}(2)$ factor of $L_e$ is embedded diagonally, $A\mapsto(A,A,A)$, into the $\mathrm{GL}(2)\times\mathrm{GL}(2)\times\mathrm{GL}(2)$ factor of $L$.
%For $q=4$, the connected component of $L_e$ is isomorphic to $\mathrm{SL}(2)$.

\subsubsection*{1.6. $\mathbf{(+3,+2^r,-2^s,+1^{p-r-s-2},-1^{q-r-s-1})}$}\

\[e=e_1\otimes\varphi'_{q-r}+\sum_{i=1}^re_{i+1}\otimes \varphi'_{q-r+i}+\sum_{i=1}^s\varphi_{p-s+i-1}\otimes e'_i+\varphi_p\otimes e'_{q-r},\]
\[h(e_i)=\left\{\begin{array}{cl}
2e_i & \mbox{if $i=1$}\\
e_i & \mbox{if $2\leq i\leq r+1$}\\
-e_i & \mbox{if $p-s\leq i\leq p-1$}\\
-2e_i & \mbox{if $i=p$}\\
0 & \mbox{otherwise}
\end{array}\right.,\quad
h(e'_i)=\left\{\begin{array}{cl}
e'_i & \mbox{if $1\leq i\leq s$}\\
-e'_i & \mbox{if $q-r+1\leq i\leq q$}\\
0 & \mbox{otherwise}
\end{array}\right.,\]
\[f=2\varphi_1\otimes e'_{q-r}+\sum_{i=1}^r\varphi_{i+1}\otimes e'_{q-r+i}+\sum_{i=1}^se_{p-s+i-1}\otimes \varphi'_i+2e_p\otimes\varphi'_{q-r}.\] 
Let $Q=L\,Q^\mathrm u$ be the corresponding parabolic subgroup of $K$,
so that $L=K_h\cong\mathrm{S}(\mathrm{GL}(1)\times\mathrm{GL}(r)\times\mathrm{GL}(p-r-s-2)\times\mathrm{GL}(s)\times\mathrm{GL}(1)\times\mathrm{GL}(s)\times\mathrm{GL}(q-r-s)\times\mathrm{GL}(r))$.

The centralizer of $e$ is $K_e=L_eK_e^\mathrm u$ where
$L_e\cong\mathrm{S}(\mathrm{GL}(1)\times\mathrm{GL}(r)\times\mathrm{GL}(p-r-s-2)\times\mathrm{GL}(s)\times\mathrm{GL}(q-r-s-1))$,
the $\mathrm{GL}(1)\times\mathrm{GL}(q-r-s-1)$ factor of $L_e$ is embedded as
\[(z,A)\mapsto(z,z,(A,z))\]
into $\mathrm{GL}(1)\times\mathrm{GL}(1)\times(\mathrm{GL}(q-r-s-1)\times\mathrm{GL}(1))$ 
and $\mathrm{GL}(q-r-s-1)\times\mathrm{GL}(1)$ is included in the $\mathrm{GL}(q-r-s)$ factor of $L$,
the $\mathrm{GL}(r)$ and $\mathrm{GL}(s)$ factors of $L_e$ are embedded diagonally, respectively, into the $\mathrm{GL}(r)\times\mathrm{GL}(r)$ and $\mathrm{GL}(s)\times\mathrm{GL}(s)$ factors of $L$.
The quotient $\mathrm{Lie}\,Q^\mathrm u/\mathrm{Lie}\,K_e^\mathrm u$ is the sum of two simple $L_e$-modules of dimension $r$ and $s$, respectively, as follows. In $\mathfrak k(1)$ there are exactly two simple $L_e$-submodules, $W_{0,1},W_{1,1}$, of highest weight $\omega_{r-1}$ w.r.t.\ the semisimple part of the $\mathrm{GL}(r)$ factor, isomorphic as $L_e$-modules but lying in two distinct isotypical $L$-components. Similarly, in $\mathfrak k(1)$ there are exactly two simple $L_e$-submodules, $W_{0,2},W_{1,2}$, of highest weight $\omega_{1}$ w.r.t.\ the semisimple part of the $\mathrm{GL}(s)$ factor, isomorphic as $L_e$-modules but lying in two distinct isotypical $L$-components. Let $V$ be the $L_e$-complement of $W_{0,1}\oplus W_{1,1}\oplus W_{0,2}\oplus W_{1,2}$ in $\mathrm{Lie}\,Q^\mathrm u$. As $L_e$-module, $\mathrm{Lie}\,K_e^\mathrm u$ is the direct sum of $V$, of a simple $L_e$-submodule of $W_{0,1}\oplus W_{1,1}$ which projects non-trivially on both summands $W_{0,1}$ and $W_{1,1}$, and of a simple $L_e$-submodule of $W_{0,2}\oplus W_{1,2}$ which projects non-trivially on both summands $W_{0,2}$ and $W_{1,2}$.

\subsubsection*{1.7. $\mathbf{(-3,+2^r,-2^s,+1^{p-r-s-1},-1^{q-r-s-2})}$}\

\[e=\sum_{i=1}^re_i\otimes \varphi'_{q-r+i-1}+e_{p-s}\otimes\varphi'_q+\varphi_{p-s}\otimes e'_1+\sum_{i=1}^s\varphi_{p-s+i}\otimes e'_{i+1},\]
\[h(e_i)=\left\{\begin{array}{cl}
e_i & \mbox{if $1\leq i\leq r$}\\
-e_i & \mbox{if $p-s+1\leq i\leq p$}\\
0 & \mbox{otherwise}
\end{array}\right.,\quad
h(e'_i)=\left\{\begin{array}{cl}
2e'_i & \mbox{if $i=1$}\\
e'_i & \mbox{if $2\leq i\leq s+1$}\\
-e'_i & \mbox{if $q-r\leq i\leq q-1$}\\
-2e'_i & \mbox{if $i=q$}\\
0 & \mbox{otherwise}
\end{array}\right.,\]
\[f=\sum_{i=1}^r\varphi_i\otimes e'_{q-r+i-1}+2\varphi_{p-s}\otimes e'_q+2e_{p-s}\otimes\varphi'_1+\sum_{i=1}^se_{p-s+i}\otimes \varphi'_{i+1}.\] 
The centralizers of $h$ and $e$ are the same as in the previous case up to switching the two factors of $K$ as well as the role of $p$ and $q$, and the role of $r$ and $s$, respectively.

\subsection{$\mathrm{SO}(2n+1)/\mathrm{SO}(2n-1)\times\mathrm{SO}(2)$}\

$K=\mathrm{SO}(2n-1)\times\mathrm{SO}(2)$, $n>2$, $\mathfrak p=V(\omega_1)\oplus V(\omega_1)$ as $K^{\mathrm{ss}}$-module. 

Let us fix a basis $e_1,\ldots,e_{n-1},e_0,e_{-n+1},\ldots,e_{-1}$ of $\mathbb C^{2n-1}$, a symmetric bilinear form $\beta$ such that $\beta(e_i,e_j)=\delta_{i,-j}$ for all $i,j$. Similarly, let us fix a basis $e'_1,e'_{-1}$ of $\mathbb C^{2}$ and a symmetric bilinear form $\beta'$ such that $\beta'(e'_i,e'_j)=\delta_{i,-j}$ for all $i,j$. For convenience, let us denote by $\varphi'_1,\varphi'_{-1}$ the dual basis of $(\mathbb C^{2})^*$. Then $K=\mathrm{SO}(\mathbb C^{2n-1},\beta)\times\mathrm{SO}(\mathbb C^{2},\beta')$ and 
\[\mathfrak p=\mathbb C^{2n-1}\otimes(\mathbb C^{2})^*.\] 

\subsubsection*{2.1. $\mathbf{(+2^2,+1^{2n-3})}$, $I$ and $II$}\

Case (I)
\[e=e_1\otimes \varphi'_{-1},\qquad
f=-e_{-1}\otimes \varphi'_{1},\] 
\[h(e_i)=\left\{\begin{array}{cl}
e_i & \mbox{if $i=1$}\\
-e_i & \mbox{if $i=-1$}\\
0 & \mbox{otherwise}
\end{array}\right.,\quad
h(e'_i)=\left\{\begin{array}{cl}
e'_i & \mbox{if $i=1$}\\
-e'_i & \mbox{if $i=-1$}
\end{array}\right..\]
Let $Q=L\,Q^\mathrm u$ be the corresponding parabolic subgroup of $K$,
so that $L=K_h\cong\mathrm{GL}(1)\times\mathrm{SO}(2n-3)\times\mathrm{GL}(1)$.

The centralizer of $e$ is $K_e=L_eQ^\mathrm u$ where
$L_e\cong\mathrm{GL}(1)\times\mathrm{SO}(2n-3)$,
the $\mathrm{GL}(1)$ factor of $L_e$ is embedded skew-diagonally, $z\mapsto(z,z^{-1})$, into the $\mathrm{GL}(1)\times\mathrm{GL}(1)$ factor of $L$.

Case (II)
\[e=e_1\otimes \varphi'_{1},\qquad
f=-e_{-1}\otimes \varphi'_{-1},\] 
\[h(e_i)=\left\{\begin{array}{cl}
e_i & \mbox{if $i=1$}\\
-e_i & \mbox{if $i=-1$}\\
0 & \mbox{otherwise}
\end{array}\right.,\quad
h(e'_i)=\left\{\begin{array}{cl}
-e'_i & \mbox{if $i=1$}\\
e'_i & \mbox{if $i=-1$}
\end{array}\right..\]
The centralizer of $h$ is the same as in case (I).

The centralizer of $e$ is also the same, except that the $\mathrm{GL}(1)$ factor of $L_e$ is embedded diagonally, $z\mapsto(z,z)$, into the $\mathrm{GL}(1)\times\mathrm{GL}(1)$ factor of $L$.

\subsubsection*{2.2. $\mathbf{(+3,+1^{2n-3},-1)}$}\

\[e=e_1\otimes (\varphi'_1-\varphi'_{-1}),\qquad
f=e_{-1}\otimes (\varphi'_{1}-\varphi'_{-1}),\] 
\[h(e_i)=\left\{\begin{array}{cl}
2e_i & \mbox{if $i=1$}\\
-2e_i & \mbox{if $i=-1$}\\
0 & \mbox{otherwise}
\end{array}\right.,\quad
h(e'_i)=0\ \forall\ i.\]
Let $Q=L\,Q^\mathrm u$ be the corresponding parabolic subgroup of $K$,
so that $L=K_h\cong\mathrm{GL}(1)\times\mathrm{SO}(2n-3)\times\mathrm{GL}(1)$.

The centralizer of $e$ is $K_e=L_eQ^\mathrm u$ where
$L_e\cong\mathrm{O}(1)\times\mathrm{SO}(2n-3)$,
the $\mathrm{O}(1)$ factor of $L_e$ is embedded diagonally into the $\mathrm{GL}(1)\times\mathrm{GL}(1)$ factor of $L$.

\subsubsection*{2.3. $\mathbf{(-3,+1^{2n-2})}$, $I$  and $II$}\

Case (I)
\[e=e_0\otimes \varphi'_{-1},\qquad
f=-2e_0\otimes \varphi'_{1},\] 
\[h(e_i)=0\ \forall\ i,\quad
h(e'_i)=\left\{\begin{array}{cl}
2e'_i & \mbox{if $i=1$}\\
-2e'_i & \mbox{if $i=-1$}
\end{array}\right..\]
Here the centralizer of $h$ is $K_h=K\cong\mathrm{SO}(2n-1)\times\mathrm{GL}(1)$.

The centralizer of $e$ is $K_e\cong\mathrm S(\mathrm{O}(2n-2)\times\mathrm O(1))$
embedded as 
\[(A,z)\mapsto((A,z),z^{-1})\] 
into $\mathrm S(\mathrm O(2n-2)\times\mathrm{O}(1))\times\mathrm{GL}(1)$,
where $\mathrm S(\mathrm O(2n-2)\times\mathrm{O}(1))$ is included in the $\mathrm{SO}(2n-1)$ factor of $K$.

Case (II)
\[e=e_0\otimes \varphi'_{1},\qquad
f=-2e_0\otimes \varphi'_{-1},\] 
\[h(e_i)=0\ \forall\ i,\quad
h(e'_i)=\left\{\begin{array}{cl}
-2e'_i & \mbox{if $i=1$}\\
2e'_i & \mbox{if $i=-1$}
\end{array}\right..\]
The centralizers of $h$ and $e$ are the same as in case (I).

\subsubsection*{2.4. $\mathbf{(+3^2,+1^{2n-5})}$}\

\[e=e_1\otimes \varphi'_{-1}-e_2\otimes \varphi'_1,\qquad
f=2(e_{-2}\otimes \varphi'_{-1}- e_{-1}\otimes \varphi'_{1}),\] 
\[h(e_i)=\left\{\begin{array}{cl}
2e_i & \mbox{if $1\leq i\leq 2$}\\
-2e_i & \mbox{if $-2\leq i\leq -1$}\\
0 & \mbox{otherwise}
\end{array}\right.,\quad
h(e'_i)=0\ \forall\ i.\]
Let $Q=L\,Q^\mathrm u$ be the corresponding parabolic subgroup of $K$,
so that $L=K_h\cong\mathrm{GL}(2)\times\mathrm{SO}(2n-5)\times\mathrm{GL}(1)$.

The centralizer of $e$ is $K_e=L_eQ^\mathrm u$ where
$L_e\cong\mathrm{SO}(2n-5)\times\mathrm{GL}(1)$,
the $\mathrm{GL}(1)$ factor of $L_e$ is embedded as 
\[z\mapsto((z,z^{-1}),z^{-1})\] 
into $(\mathrm{GL}(1)\times\mathrm{GL}(1))\times\mathrm{GL}(1)$ 
included into the $\mathrm{GL}(2)\times\mathrm{GL}(1)$ factor of $L$.

\subsection{$\mathrm{Sp}(2n)/\mathrm{GL}(n)$}\

$K=\mathrm{GL}(n)$, $n\geq2$, $\mathfrak p=V(2\omega_1)\oplus V(2\omega_{n-1})$ as $K^{\mathrm{ss}}$-module.

Let us fix a basis $e_1,\ldots,e_n$ of $\mathbb C^{n}$ and denote by $\varphi_1,\ldots,\varphi_n$ the dual basis of $(\mathbb C^{n})^*$. Then $K=\mathrm{GL}(\mathbb C^{n})$ and 
\[\mathfrak p=\mathsf S^2(\mathbb C^{n})\oplus\mathsf S^2(\mathbb C^{n})^*.\] 

\subsubsection*{3.1. $\mathbf{(+2^r,+1^{2n-2r})}$}\

\[e=\sum_{i=1}^re_i e_{r-i+1},\qquad
h(e_i)=\left\{\begin{array}{cl}
e_i & \mbox{if $1\leq i\leq r$}\\
0 & \mbox{otherwise}
\end{array}\right.,\qquad
f=\sum_{i=1}^r\varphi_{i}\varphi_{r-i+1}.\]
Let $Q=L\,Q^\mathrm u$ be the corresponding parabolic subgroup of $K$,
so that $L=K_h\cong\mathrm{GL}(r)\times\mathrm{GL}(n-r)$.

The centralizer of $e$ is $K_e=L_eQ^\mathrm u$ where
$L_e\cong\mathrm{O}(r)\times\mathrm{GL}(n-r)$.

\subsubsection*{3.2. $\mathbf{(-2^r,+1^{2n-2r})}$}\

\[e=\sum_{i=1}^r\varphi_{n-r+i}\varphi_{n-i+1},\qquad
f=\sum_{i=1}^r e_{n-r+i} e_{n-i+1},\]
\[h(e_i)=\left\{\begin{array}{cl}
-e_i & \mbox{if $n-r+1\leq i\leq n$}\\
0 & \mbox{otherwise}
\end{array}\right..\]
The centralizers of $h$ and $e$ are the same as in the previous case up to an external automorphism of $K$. 

%Let $Q=L\,Q^\mathrm u$ be the corresponding parabolic subgroup of $K$,
%so that $L=K_h\cong\mathrm{GL}(n-r)\times\mathrm{GL}(r)$.

%The centralizer of $e$ is $K_e=L_eQ^\mathrm u$ where
%$L_e\cong\mathrm{GL}(n-r)\times\mathrm{O}(r)$.

\subsubsection*{3.3. $\mathbf{(+2^r,-2^s,+1^{2n-2r-2s})}$}\

\[e=\sum_{i=1}^r e_i e_{r-i+1} + \sum_{i=1}^s \varphi_{n-s+i}\varphi_{n-i+1},\qquad
f=\sum_{i=1}^r \varphi_i\varphi_{r-i+1} + \sum_{i=1}^s e_{n-s+i} e_{n-i+1},\]
\[h(e_i)=\left\{\begin{array}{cl}
e_i & \mbox{if $1\leq i\leq r$}\\
-e_i & \mbox{if $n-s+1\leq i\leq n$}\\
0 & \mbox{otherwise}
\end{array}\right..\]

Let $Q=L\,Q^\mathrm u$ be the corresponding parabolic subgroup of $K$,
so that $L=K_h\cong\mathrm{GL}(r)\times\mathrm{GL}(n-r-s)\times\mathrm{GL}(s)$.

The centralizer of $e$ is $K_e=L_eQ^\mathrm u$ where
$L_e\cong\mathrm{O}(r)\times\mathrm{GL}(n-r-s)\times\mathrm{O}(s)$.

\subsection{$\mathrm{SO}(2n)/\mathrm{SO}(2n-2)\times\mathrm{SO}(2)$}\

$K=\mathrm{SO}(2n-2)\times\mathrm{SO}(2)$, $n>4$, $\mathfrak p=V(\omega_1)\oplus V(\omega_1)$ as $K^{\mathrm{ss}}$-module.

Let us fix a basis $e_1,\ldots,e_{n-1},e_{-n+1},\ldots,e_{-1}$ of $\mathbb C^{2n-2}$, a symmetric bilinear form $\beta$ such that $\beta(e_i,e_j)=\delta_{i,-j}$ for all $i,j$. Similarly, let us fix a basis $e'_1,e'_{-1}$ of $\mathbb C^{2}$ and a symmetric bilinear form $\beta'$ such that $\beta'(e'_i,e'_j)=\delta_{i,-j}$ for all $i,j$. For convenience, let us denote by $\varphi'_1,\varphi'_{-1}$ the dual basis of $(\mathbb C^{2})^*$. Then $K=\mathrm{SO}(\mathbb C^{2n-2},\beta)\times\mathrm{SO}(\mathbb C^{2},\beta')$ and 
\[\mathfrak p=\mathbb C^{2n-2}\otimes(\mathbb C^{2})^*.\] 

\subsubsection*{4.1. $\mathbf{(+2^2,+1^{2n-4})}$, $I$  and $II$}\

Case (I)
\[e=e_1\otimes \varphi'_{-1},\qquad
f=-e_{-1}\otimes \varphi'_{1},\] 
\[h(e_i)=\left\{\begin{array}{cl}
e_i & \mbox{if $i=1$}\\
-e_i & \mbox{if $i=-1$}\\
0 & \mbox{otherwise}
\end{array}\right.,\quad
h(e'_i)=\left\{\begin{array}{cl}
e'_i & \mbox{if $i=1$}\\
-e'_i & \mbox{if $i=-1$}
\end{array}\right..\]
Let $Q=L\,Q^\mathrm u$ be the corresponding parabolic subgroup of $K$,
so that $L=K_h\cong\mathrm{GL}(1)\times\mathrm{SO}(2n-4)\times\mathrm{GL}(1)$.

The centralizer of $e$ is $K_e=L_eQ^\mathrm u$ where
$L_e\cong\mathrm{GL}(1)\times\mathrm{SO}(2n-4)$,
the $\mathrm{GL}(1)$ factor of $L_e$ is embedded skew-diagonally, $z\mapsto(z,z^{-1})$, into the $\mathrm{GL}(1)\times\mathrm{GL}(1)$ factor of $L$.

Case (II)
\[e=e_1\otimes \varphi'_{1},\qquad
f=-e_{-1}\otimes \varphi'_{-1},\] 
\[h(e_i)=\left\{\begin{array}{cl}
e_i & \mbox{if $i=1$}\\
-e_i & \mbox{if $i=-1$}\\
0 & \mbox{otherwise}
\end{array}\right.,\quad
h(e'_i)=\left\{\begin{array}{cl}
-e'_i & \mbox{if $i=1$}\\
e'_i & \mbox{if $i=-1$}
\end{array}\right..\]
The centralizer of $h$ is the same as in case (I).

The centralizer of $e$ is also the same, except that the $\mathrm{GL}(1)$ factor of $L_e$ is embedded diagonally, $z\mapsto(z,z)$, into the $\mathrm{GL}(1)\times\mathrm{GL}(1)$ factor of $L$.

\subsubsection*{4.2. $\mathbf{(+3,+1^{2n-4},-1)}$}\

\[e=e_1\otimes (\varphi'_1-\varphi'_{-1}),\qquad
f=e_{-1}\otimes (\varphi'_{1}-\varphi'_{-1}),\] 
\[h(e_i)=\left\{\begin{array}{cl}
2e_i & \mbox{if $i=1$}\\
-2e_i & \mbox{if $i=-1$}\\
0 & \mbox{otherwise}
\end{array}\right.,\quad
h(e'_i)=0\ \forall\ i.\]
Let $Q=L\,Q^\mathrm u$ be the corresponding parabolic subgroup of $K$,
so that $L=K_h\cong\mathrm{GL}(1)\times\mathrm{SO}(2n-4)\times\mathrm{GL}(1)$.

The centralizer of $e$ is $K_e=L_eQ^\mathrm u$ where
$L_e\cong\mathrm{O}(1)\times\mathrm{SO}(2n-4)$,
the $\mathrm{O}(1)$ factor of $L_e$ is embedded diagonally into the $\mathrm{GL}(1)\times\mathrm{GL}(1)$ factor of $L$.

\subsubsection*{4.3. $\mathbf{(-3,+1^{2n-3})}$, $I$  and $II$}\

Case (I)
\[e=(e_{n-1}-e_{-n+1})\otimes \varphi'_{-1},\qquad
f=(e_{n-1}-e_{-n+1})\otimes \varphi'_{1},\] 
\[h(e_i)=0\ \forall\ i,\quad
h(e'_i)=\left\{\begin{array}{cl}
2e'_i & \mbox{if $i=1$}\\
-2e'_i & \mbox{if $i=-1$}
\end{array}\right..\]
Here the centralizer of $h$ is $K_h=K\cong\mathrm{SO}(2n-2)\times\mathrm{GL}(1)$.

The centralizer of $e$ is $K_e\cong\mathrm{SO}(2n-3)\times\mathrm O(1)$,
embedded as 
\[(A,z)\mapsto((A,z),z^{-1})\] 
into $(\mathrm{SO}(2n-3)\times\mathrm{O}(1))\times\mathrm{GL}(1)$,
where $\mathrm{SO}(2n-3)\times\mathrm{O}(1)$ is included in the $\mathrm{SO}(2n-2)$ factor of $K$.

Case (II)
\[e=(e_{n-1}-e_{-n+1})\otimes \varphi'_{1},\qquad
f=(e_{n-1}-e_{-n+1})\otimes \varphi'_{-1},\] 
\[h(e_i)=0\ \forall\ i,\quad
h(e'_i)=\left\{\begin{array}{cl}
-2e'_i & \mbox{if $i=1$}\\
2e'_i & \mbox{if $i=-1$}
\end{array}\right..\]
The centralizers of $h$ and $e$ are the same as in case (I).

\subsubsection*{4.4. $\mathbf{(+3^2,+1^{2n-6})}$}\

\[e=e_1\otimes \varphi'_{-1}-e_2\otimes \varphi'_1,\qquad
f=2(e_{-2}\otimes \varphi'_{-1}- e_{-1}\otimes \varphi'_{1}),\] 
\[h(e_i)=\left\{\begin{array}{cl}
2e_i & \mbox{if $1\leq i\leq 2$}\\
-2e_i & \mbox{if $-2\leq i\leq -1$}\\
0 & \mbox{otherwise}
\end{array}\right.,\quad
h(e'_i)=0\ \forall\ i.\]
Let $Q=L\,Q^\mathrm u$ be the corresponding parabolic subgroup of $K$,
so that $L=K_h\cong\mathrm{GL}(2)\times\mathrm{SO}(2n-6)\times\mathrm{GL}(1)$.

The centralizer of $e$ is $K_e=L_eQ^\mathrm u$ where
$L_e\cong\mathrm{SO}(2n-6)\times\mathrm{GL}(1)$,
the $\mathrm{GL}(1)$ factor of $L_e$ is embedded as 
\[z\mapsto((z,z^{-1}),z^{-1})\] 
into $(\mathrm{GL}(1)\times\mathrm{GL}(1))\times\mathrm{GL}(1)$ 
included into the $\mathrm{GL}(2)\times\mathrm{GL}(1)$ factor of $L$.

\subsection{$\mathrm{SO}(2n)/\mathrm{GL}(n)$}\

$K=\mathrm{GL}(n)$, $n\geq4$, $\mathfrak p=V(\omega_2)\oplus V(\omega_{n-2})$ as $K^{\mathrm{ss}}$-module.

Let us fix a basis $e_1,\ldots,e_n$ of $\mathbb C^{n}$ and denote by $\varphi_1,\ldots,\varphi_n$ the dual basis of $(\mathbb C^{n})^*$. Then $K=\mathrm{GL}(\mathbb C^{n})$ and 
\[\mathfrak p=\mathsf \Lambda^2(\mathbb C^{n})\oplus\mathsf \Lambda^2(\mathbb C^{n})^*.\] 

\subsubsection*{5.1. $\mathbf{(+2^r,+1^{n-2r})}$}\

\[e=\sum_{i=1}^re_i\wedge e_{2r-i+1},\qquad
h(e_i)=\left\{\begin{array}{cl}
e_i & \mbox{if $1\leq i\leq 2r$}\\
0 & \mbox{otherwise}
\end{array}\right.,\qquad
f=\sum_{i=1}^r\varphi_{i}\wedge\varphi_{2r-i+1}.\]
Let $Q=L\,Q^\mathrm u$ be the corresponding parabolic subgroup of $K$,
so that $L=K_h\cong\mathrm{GL}(2r)\times\mathrm{GL}(n-2r)$.

The centralizer of $e$ is $K_e=L_eQ^\mathrm u$ where
$L_e\cong\mathrm{Sp}(2r)\times\mathrm{GL}(n-2r)$.

\subsubsection*{5.2. $\mathbf{(-2^r,+1^{n-2r})}$}\

\[e=\sum_{i=1}^r\varphi_{n-2r+i-1}\wedge \varphi_{n-i+1},\qquad
f=\sum_{i=1}^r e_{n-2r+i-1}\wedge e_{n-i+1},\]
\[h(e_i)=\left\{\begin{array}{cl}
-e_i & \mbox{if $n-2r+1\leq i\leq n$}\\
0 & \mbox{otherwise}
\end{array}\right..\]
The centralizers of $h$ and $e$ are the same as in the previous case up to an external automorphism of $K$. 

%Let $Q=L\,Q^\mathrm u$ be the corresponding parabolic subgroup of $K$,
%so that $L=K_h\cong\mathrm{GL}(n-2r)\times\mathrm{GL}(2r)$.

%The centralizer of $e$ is $K_e=L_eQ^\mathrm u$ where
%$L_e\cong\mathrm{GL}(n-2r)\times\mathrm{Sp}(2r)$.

\subsubsection*{5.3. $\mathbf{(+2^r,-2^s,+1^{n-2r-2s})}$}\

\[e=\sum_{i=1}^re_i\wedge e_{2r-i+1} + \sum_{i=1}^s\varphi_{n-2s+i-1}\wedge \varphi_{n-i+1},\]
\[h(e_i)=\left\{\begin{array}{cl}
e_i & \mbox{if $1\leq i\leq 2r$}\\
-e_i & \mbox{if $n-2s+1\leq i\leq n$}\\
0 & \mbox{otherwise}
\end{array}\right.,\]
\[f=\sum_{i=1}^r\varphi_{i}\wedge\varphi_{2r-i+1} + \sum_{i=1}^s e_{n-2s+i-1}\wedge e_{n-i+1}.\]
Let $Q=L\,Q^\mathrm u$ be the corresponding parabolic subgroup of $K$,
so that $L=K_h\cong\mathrm{GL}(2r)\times\mathrm{GL}(n-2r-2s)\times\mathrm{GL}(2s)$.

The centralizer of $e$ is $K_e=L_eQ^\mathrm u$ where
$L_e\cong\mathrm{Sp}(2r)\times\mathrm{GL}(n-2r-2s)\times\mathrm{Sp}(2s)$.

\subsubsection*{5.4. $\mathbf{(+3,+1^{n-3})}$}\

\[e=e_1\wedge e_2 + \varphi_2\wedge \varphi_n,\qquad
f=2(\varphi_1\wedge \varphi_2 + e_2\otimes e_n),\] 
\[h(e_i)=\left\{\begin{array}{cl}
2e_i & \mbox{if $i=1$}\\
-2e_i & \mbox{if $i=n$}\\
0 & \mbox{otherwise}
\end{array}\right..\]
Let $Q=L\,Q^\mathrm u$ be the corresponding parabolic subgroup of $K$,
so that $L=K_h\cong\mathrm{GL}(1)\times\mathrm{GL}(n-2)\times\mathrm{GL}(1)$.

The centralizer of $e$ is $K_e=L_eQ^\mathrm u$ where
$L_e\cong\mathrm{GL}(1)\times\mathrm{GL}(n-3)$, $L_e$ is embedded as 
\[(z,A)\mapsto(z,(z^{-1},A),z)\] 
into $\mathrm{GL}(1)\times(\mathrm{GL}(1)\times\mathrm{GL}(n-3))\times\mathrm{GL}(1)$,
and $(\mathrm{GL}(1)\times\mathrm{GL}(n-3))$ is included into the $\mathrm{GL}(n-2)$ factor of $L$.

%%%%%%%%%%%%%%%%%%%%%%%%%%%%%%%%%%%%%%%%%%%%%%%%%%
%%%%%%%%%%%%%%%%%%%%%%%%%%%%%%%%%%%%%%%%%%%%%%%%%%
\section{Tables of spherical nilpotent $K$-orbits  in $\mathfrak p$ in the classical Hermitian cases}\label{B}
%%%%%%%%%%%%%%%%%%%%%%%%%%%%%%%%%%%%%%%%%%%%%%%%%%
%%%%%%%%%%%%%%%%%%%%%%%%%%%%%%%%%%%%%%%%%%%%%%%%%%

In Tables~1--6, for every spherical nilpotent orbit $Ke \subset \mathfrak p$, we report its signed partition (column~2), the Kostant-Dynkin diagram and the height of $Ge$ (columns~3 and 4), the Kostant-Dynkin diagram and the $\mathfrak p$-height of $Ke$ (columns~5 and 6), the codimension of $\ol{Ke} \smallsetminus Ke$ in $\ol{Ke}$ (column~7) and the weight semigroup of $\ol{Ke}$ (column~8).

The generators of the weight semigroups given in the tables are expressed in terms of the fundamental weights of $K^{\mathrm{ss}}$, the semisimple part of $K$, plus a multiple of $\chi$, where $\chi$ denotes the character of the 1-dimensional center of $K$ on $\mathfrak p_1$, as defined in Section~\ref{s:4}.

Recall that the fundamental weights of an irreducible root system of rank $n$ are denoted by $\omega_1,\ldots,\omega_n$ (and enumerated as in Bourbaki). For notational convenience, we set $\omega_i=0$ if $i\leq 0$ or $i>n$.

In Tables~7--11, for every spherical nilpotent orbit $Ke$ in $\gop$, we report the Luna diagram and the set of spherical roots of the spherical system of $K\pi(e)$, where the definition of $\pi(e)$ is as follows. Recall that in the Hermitian case $\mathfrak p= \mathfrak p_1 \oplus \mathfrak p_2$,  so that $e=e_1+e_2$ with $e_1\in\mathfrak p_1$ and $e_2\in\mathfrak p_2$. Therefore, we set $\pi(e)=[e_1]\in\mathbb P(\mathfrak p_1)$ if $e_2=0$,   $\pi(e)=[e_2]\in\mathbb P(\mathfrak p_2)$ if $e_1=0$, and $\pi(e)=([e_1],[e_2])\in\mathbb P(\mathfrak p_1)\times\mathbb P(\mathfrak p_2)$ otherwise.

\includepdf[fitpaper,pages=-,landscape]{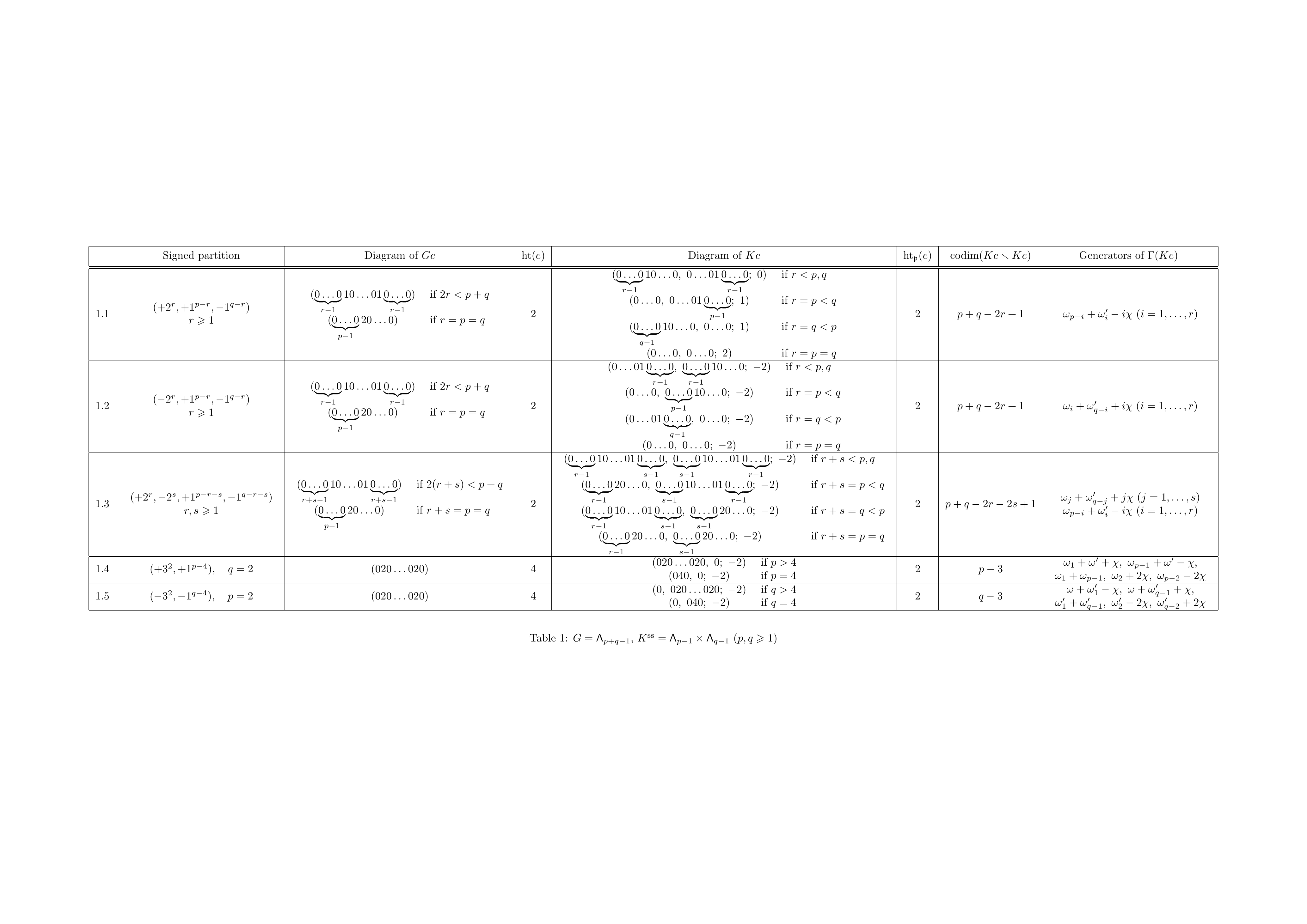}
\includepdf[fitpaper,pages=-,landscape]{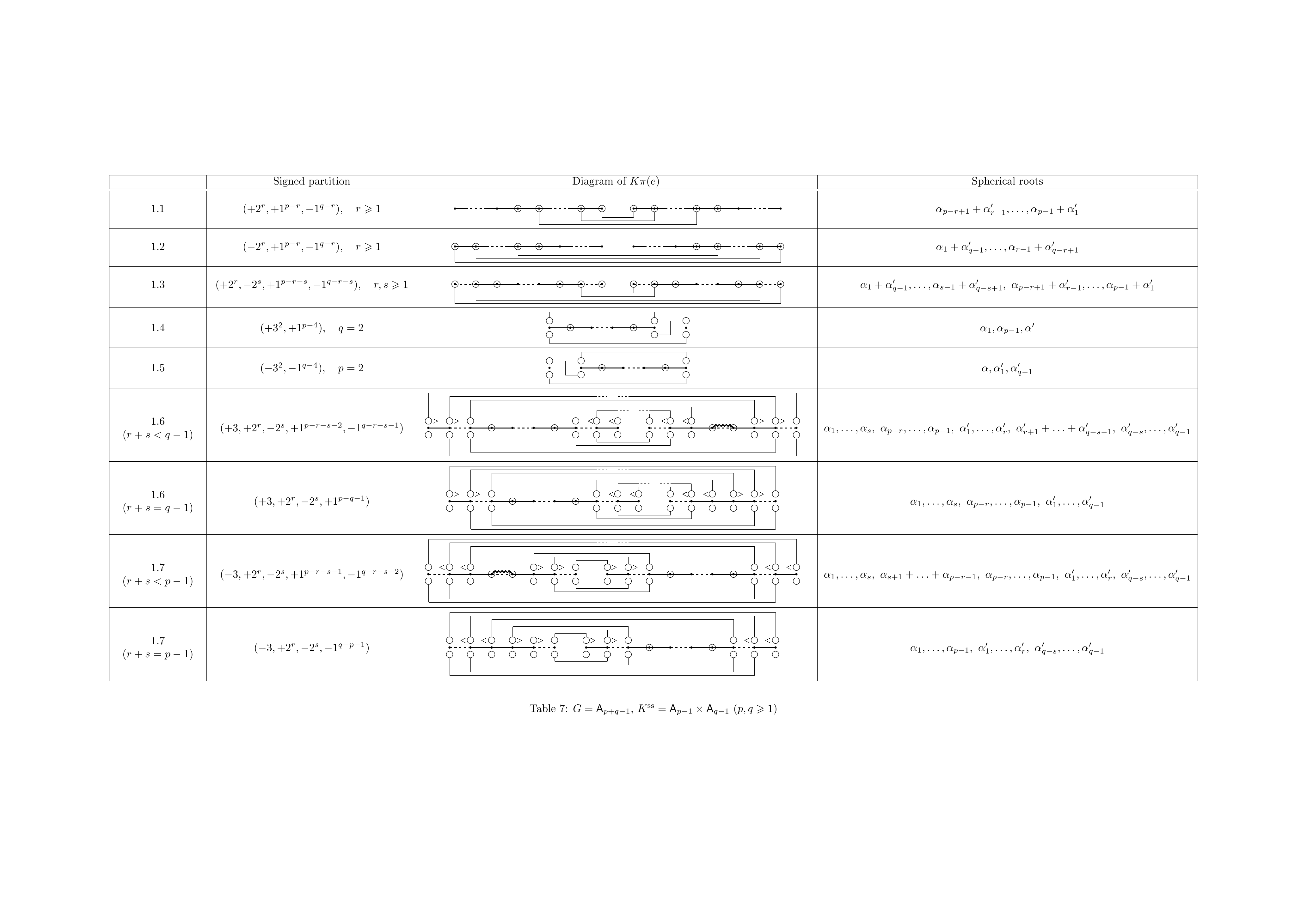}

%%%%%%%%%%%%%%%%%%%%%%%%%%%%%%%%%%%%%%%%%%%%%%%%%%%%%%%%%%%%%%%%%%%%%%%%%%%%%%%%%%%%%%%%%%%%%%%%%%%%%%%%%%%%%%%%%%%%%%%
%%%%%%%%%%%%%%%%%%%%%%%%%%%%%%%%%%%%%%%%%%%%%%%%%%%%%%%%%%%%%%%%%%%%%%%%%%%%%%%%%%%%%%%%%%%%%%%%%%%%%%%%%%%%%%%%%%%%%%%

\end{document}